\newtheorem{Proposition}{Proposition}[section]
\newtheorem{Lemma}[Proposition]{Lemma}
\newtheorem{Theorem}[Proposition]{Theorem}
\newtheorem{Corollary}[Proposition]{Corollary}
\theoremstyle{remark}
\newtheorem{Remark}[Proposition]{Remark}
\newbox\squ  % box character for ends of proofs
\newcommand{\Tab}{\operatorname{Tab}}
\newcommand{\col}{\operatorname{col}}
\newcommand{\cdet}{\operatorname{cdet}}
\newcommand{\row}{\operatorname{row}}
\newcommand{\up}{{\operatorname{up}}}
\newcommand{\lo}{{\operatorname{lo}}}
\newcommand{\g}{\mathfrak{g}}
\newcommand{\h}{\mathfrak{h}}
\newcommand{\gl}{\mathfrak{gl}}
\newcommand{\m}{\mathfrak{m}}
\newcommand{\p}{\mathfrak{p}}
\newcommand{\q}{\mathfrak{q}}
\def\mf{\mathfrak}
\def\coloneqq{:=}
\def\deg{\operatorname{deg}}
\def\gr{\operatorname{gr}}
\def\id{\operatorname{id}}
\def\ev{{\operatorname{ev}}}
\def\C{{\mathbb C}}
\def\Z{{\mathbb Z}}
\def\0{{\bar 0}}
\def\1{{\bar 1}}
\def\eps{{\varepsilon}}
\def\llbracket{[\![}
\def\rrbracket{]\!]}
\newdimen\Hoogte    \Hoogte=10pt    % hoogte  van hokje
\newdimen\Breedte   \Breedte=10pt   % breedte van hokje
\newdimen\Dikte     \Dikte=0.5pt    % dikte lijn
\newenvironment{Young}{\begingroup
       \def\vr{\vrule height0.8\Hoogte width\Dikte depth 0.2\Hoogte}
       \def\fbox##1{\vbox{\offinterlineskip
                    \hrule height\Dikte
                    \hbox to \Breedte{\vr\hfill##1\hfill\vr}
                    \hrule height\Dikte}}
       \vbox\bgroup \offinterlineskip \tabskip=-\Dikte \lineskip=-\Dikte
            \halign\bgroup &\fbox{##\unskip}\unskip  \crcr }
       {\egroup\egroup\endgroup}
\def\Diagram#1{\relax\ifmmode\vcenter{\,\begin{Young}#1\end{Young}\,}\else%
              $\vcenter{\,\begin{Young}#1\end{Young}\,}$\fi}
\title[Principal $W$-algebras]{\boldmath Principal $W$-algebras for
$\mathrm{GL}(m|n)$}
\author{Jonathan Brown, Jonathan Brundan and Simon M.~Goodwin}
\address{School of Mathematics,
University of Birmingham,
Birmingham, B15 2TT,
UK}
\email{brownjs@for.mat.bham.ac.uk, goodwin@for.mat.bham.ac.uk}
\address{Department of Mathematics, University of Oregon, Eugene, OR 97403, USA}
\email{brundan@uoregon.edu}
\thanks{2010 {\it Mathematics Subject Classification}: 17B10, 17B37.}
\thanks{First and third authors supported by EPSRC grant EP/G020809/1.}
\thanks{Second author supported in part by NSF grant no. DMS-1161094.}
\begin{document}

\begin{abstract}
We consider the (finite) $W$-algebra $W_{m|n}$ attached to the
principal
nilpotent orbit in the general linear Lie superalgebra
$\mathfrak{gl}_{m|n}(\mathbb{C})$. Our main result gives an explicit
description of $W_{m|n}$ as a certain truncation of a
shifted version of the Yangian $Y(\mathfrak{gl}_{1|1})$.
We also show that $W_{m|n}$ admits a triangular decomposition
and construct its irreducible representations.
\end{abstract}

\maketitle

\section{Introduction}

A {\em (finite) $W$-algebra} is a certain filtered deformation of the Slodowy
slice to a nilpotent orbit in a complex semisimple Lie algebra $\mathfrak{g}$.
Although the terminology is more recent, the
construction has its origins in the classic work of Kostant \cite{K}.
In particular Kostant
showed that the
principal $W$-algebra---the one
associated to the principal nilpotent orbit in $\mathfrak{g}$---is
isomorphic to the center of the universal enveloping algebra
$U(\mathfrak{g})$.
In the last few years there has been some substantial progress
in understanding $W$-algebras for other nilpotent orbits,
thanks to works of Premet, Losev and others; see
\cite{L} for a survey. The story is  most complete (also easiest) for
$\mathfrak{sl}_n(\C)$. In this case
the
$W$-algebras are closely related to {\em shifted Yangians}; see
\cite{BK}.

Analogues of $W$-algebras have also been defined for Lie
superalgebras; see for example the work of De Sole and Kac \cite[$\S$5.2]{DK}
(where they are defined in terms of BRST cohomology)
or the more recent paper of
Zhao \cite{Zhao} (which focusses mainly on the queer Lie superalgebra
$\mathfrak{q}_n(\C)$).
In this article we consider the easiest of all the ``super''
situations: the {\em principal $W$-algebra
$W_{m|n}$
for the general linear Lie superalgebra $\mathfrak{gl}_{m|n}(\C)$}.
Our main result gives an explicit isomorphism between $W_{m|n}$ and a certain
truncation of a shifted subalgebra of the Yangian
$Y(\mathfrak{gl}_{1|1})$; see Theorem \ref{T:main}.
Its proof is very similar to the
proof of the analogous result for
nilpotent
matrices of Jordan type $(m,n)$
in $\mathfrak{gl}_{m+n}(\C)$
from \cite{BK}.

The (super)algebra $W_{m|n}$ turns out to be quite close to being supercommutative.
More precisely, we show that
it admits a triangular decomposition $$
W_{m|n} = W_{m|n}^-
W_{m|n}^0 W_{m|n}^+
$$
in which $W_{m|n}^-$ and $W_{m|n}^+$ are
exterior algebras of dimension $2^{\min(m,n)}$
and $W_{m|n}^0$
is a symmetric algebra of rank $(m+n)$; see Theorem~\ref{triangular}.
This implies that all the irreducible $W_{m|n}$-modules are finite
dimensional; see Theorem~\ref{irrclass}.
We show further that they
all arise as certain tensor products
of irreducible $\mathfrak{gl}_{1|1}(\C)$- and
$\mathfrak{gl}_1(\C)$-modules; see Theorem~\ref{fint}. In particular, all irreducible
$W_{m|n}$-modules are of dimension dividing $2^{\min(m,n)}$.
A closely related assertion is that all irreducible
highest weight representations of $Y(\mathfrak{gl}_{1|1})$
are tensor products of evaluation modules;
this is similar to a well-known phenomenon for $Y(\mathfrak{gl}_2)$
going back to \cite{T}.

Some related results
about $W_{m|n}$ have been obtained
independently by Poletaeva and Serganova \cite{PS}.
In fact, the connection between $W_{m|n}$ and the Yangian
$Y(\mathfrak{gl}_{1|1})$ was foreseen long ago by Briot and Ragoucy \cite{BR}.
Briot and Ragoucy also looked at certain non-principal
nilpotent orbits which they
assert are connected to
higher rank super Yangians, although we do not understand their
approach.
It should be possible to combine the methods of this article with
those of \cite{BK}
to establish such a
connection
for {\em all} nilpotent orbits
in $\mathfrak{gl}_{m|n}(\C)$.
However this is not trivial and will require
some new presentations for the higher rank super
Yangians adapted to arbitrary parity sequences; the ones
in \cite{G, Peng} are not sufficient as they only apply to the standard parity sequence.
This is currently under investigation by Peng \cite{Peng2}.

By analogy with Kostant's results from \cite{K}
our expectation is that $W_{m|n}$
will play a distinguished role in the
representation theory of
$\mathfrak{gl}_{m|n}(\C)$.
In a forthcoming article \cite{BBG}, we will investigate the {\em Whittaker coinvariants functor} $H_0$, a certain
exact functor
from the analogue of category $\mathcal O$ for $\mathfrak{gl}_{m|n}(\C)$
to the category
of finite dimensional $W_{m|n}$-modules.
We view this as a replacement for
 Soergel's functor $\mathbb{V}$ from
\cite{Soergel};
see also
\cite{Backelin}.
We will show that $H_0$ sends
irreducible modules in $\mathcal O$ to irreducible $W_{m|n}$-modules or zero, and that all
irreducible $W_{m|n}$-modules occur in this way; this should be compared with the analogous result for
parabolic category $\mathcal O$ for
$\mathfrak{gl}_{m+n}(\C)$
obtained in \cite[Theorem E]{BKrep}.
 We will also use properties of $H_0$ to prove that the center of $W_{m|n}$ is
isomorphic to the center of the universal enveloping superalgebra of
$\mathfrak{gl}_{m|n}(\C)$.

\vspace{2mm}
\noindent
{\em Notation.}
We denote the parity of a homogeneous vector $x$ in a
$\Z / 2$-graded vector space
 by $|x| \in \{\0,\1\}$.
A {\em superalgebra} means a $\Z/ 2$-graded
algebra over $\C$.
For homogeneous $x$ and $y$ in an associative superalgebra $A = A_\0
\oplus A_\1$,
their {\em supercommutator} is
$[x,y] \coloneqq xy - (-1)^{|x||y|} yx$.
We say that $A$ is {\em supercommutative} if $[x,y] = 0$
for all homogeneous $x, y \in A$.
Also for homogeneous $x_1,\dots,x_n \in A$,
an {\em ordered supermonomial}
in $x_1,\dots,x_n$
means a monomial of the form $x_1^{i_1} \cdots x_n^{i_n}$
for $i_1,\dots,i_n \geq 0$ such that $i_j \leq 1$ if $x_j$ is odd.

\section{Shifted Yangians}

Recall that $\mathfrak{gl}_{m|n}(\C)$
is the Lie superalgebra of all $(m+n) \times (m+n)$
complex matrices under the supercommutator, with $\Z / 2$-grading
defined so that the matrix unit $e_{i,j}$ is even if $1 \leq i,j \leq m$ or
$m+1 \leq i,j \leq m+n$, and $e_{i,j}$ is odd otherwise.
We denote its universal enveloping superalgebra by
$U(\mathfrak{gl}_{m|n})$; it has basis given by all ordered supermonomials in the matrix units.

The Yangian $Y(\mathfrak{gl}_{m|n})$ was introduced originally by
Nazarov \cite{N}
; see also \cite{G}.
We only need here the special case of $Y = Y(\mathfrak{gl}_{1|1})$.
For its definition we fix
a choice of {\em parity sequence}
\begin{equation}\label{parseq}
(|1|,|2|) \in \Z / 2 \times \Z / 2
\end{equation}
with $|1| \neq |2|$.
All subsequent notation in the remainder of the article
depends implicitly on this choice.
Then we define $Y$
to be the associative
superalgebra on generators $\{t_{i,j}^{(r)}\:|\:1 \leq i,j \leq 2, r
> 0\}$,
with $t_{i,j}^{(r)}$ of parity $|i|+|j|$,
subject to the relations
\begin{align*}
\left[t_{i,j}^{(r)}, t_{p,q}^{(s)}\right]
&= (-1)^{|i| |j| + |i| |p| + |j||p|}
\sum_{a=0}^{\min(r,s)-1}
\left(t_{p,j}^{(a)}
t_{i,q}^{(r+s-1-a)}
-t_{p,j}^{(r+s-1-a)} t_{i,q}^{(a)}\right),
\end{align*}
adopting the convention that $t_{i,j}^{(0)} = \delta_{i,j}$
(Kronecker delta).

\begin{Remark}\label{nodef}
In the literature, one typically only finds results about
$Y(\mathfrak{gl}_{1|1})$
proved for the definition coming from the
parity sequence $(|1|,|2|) = (\0,\1)$.
To aid in translating between this and the
other possibility,
we note that the map
$t_{i,j}^{(r)} \mapsto (-1)^r t_{i,j}^{(r)}$
defines an isomorphism between the
realizations of $Y(\mathfrak{gl}_{1|1})$ arising from the two
choices of parity sequence.
\end{Remark}

As in \cite{N}, we introduce the generating function
$$
t_{i,j}(u) \coloneqq \sum_{r \geq 0} t_{i,j}^{(r)} u^{-r} \in Y\llbracket u^{-1}\rrbracket.
$$
Then
$Y$ is a Hopf superalgebra with comultiplication $\Delta$
and counit $\eps$
given in terms of generating functions by
\begin{align}\label{comult}
\Delta(t_{i,j}(u)) &= \sum_{h=1}^2 t_{i,h}(u) \otimes t_{h,j}(u),\\
\eps(t_{i,j}(u)) &= \delta_{i,j}.\label{counit}
\end{align}
There are also algebra homomorphisms
\begin{align}
\operatorname{in}&:U(\gl_{1|1}) \rightarrow
Y,
&
e_{i,j} &\mapsto (-1)^{|i|}t_{i,j}^{(1)},\\
\ev&:Y \rightarrow
U(\mathfrak{gl}_{1|1}),
&t_{i,j}^{(r)} &\mapsto
\delta_{r,0} \delta_{i,j}+(-1)^{|i|}\delta_{r,1} e_{i,j}.\label{eval}
\end{align}
The composite $\ev \circ \operatorname{in}$
is the identity, hence $\operatorname{in}$ is injective
and $\ev$ is surjective.
We call $\ev$ the {\em evaluation homomorphism}.

We need another set of generators for $Y$ called {\em Drinfeld generators}.
To define these,  we consider the Gauss factorization
$T(u) = F(u) D(u) E(u)$
of the matrix
$$
T(u) \coloneqq
\left(
\begin{array}{ll}
t_{1,1}(u)&t_{1,2}(u)\\
t_{2,1}(u)&t_{2,2}(u)
\end{array}
\right).
$$
This defines
power series $d_i(u), e(u), f(u) \in Y\llbracket u^{-1}
\rrbracket$ such that
$$
D(u) = \left(
\begin{array}{ll}
  d_1(u) & 0 \\0 & d_2(u)
\end{array}
\right),\quad
E(u) = \left(
\begin{array}{ll}
  1 & e(u) \\0 & 1
\end{array}
\right),
\quad
F(u) = \left(
\begin{array}{ll}
  1 & 0 \\f(u) & 1
\end{array}
\right).
$$
Thus we have that
\begin{align}\label{form1}
d_1(u) &= t_{1,1}(u),
& d_2(u) &= t_{2,2}(u) - t_{2,1}(u)t_{1,1}(u)^{-1}t_{1,2}(u),\\
 e(u) &= t_{1,1}(u)^{-1}t_{1,2}(u),&
 f(u) &= t_{2,1}(u) t_{1,1}(u)^{-1}.\label{form2}
\end{align}
Equivalently,
\begin{align}\label{form3}
t_{1,1}(u) &= d_{1}(u),
& t_{2,2}(u) &= d_{2}(u) + f(u) d_1(u) e(u),\\
t_{1,2}(u) &= d_1(u) e(u),&
t_{2,1}(u) &= f(u) d_1(u).\label{form4}
\end{align}
The {Drinfeld generators} are the elements
$d_i^{(r)}, e^{(r)}$ and $f^{(r)}$ of $Y$
defined from the expansions
$d_i(u) = \sum_{r \geq 0} d_i^{(r)} u^{-r}$,
$e(u) = \sum_{r \geq 1} e^{(r)} u^{-r}$ and
$f(u) = \sum_{r \geq 1} f^{(r)} u^{-r}$.
Also define $\tilde d_i^{(r)} \in Y$
from the identity
$\tilde{d}_i(u) = \sum_{r \geq 0} \tilde{d}_i^{(r)}u^{-r} \coloneqq
d_i(u)^{-1}$.

\begin{Theorem}{\cite[Theorem 3]{G}}\label{T:gens}
The superalgebra $Y$ is generated by the even elements
$\{d_i^{(r)}\:|\:i = 1,2, r > 0\}$
and the  odd elements $\{e^{(r)}, f^{(r)}\:|\:r > 0\}$
subject only to the following relations:
\begin{align*}
[d_i^{(r)}, d_j^{(s)}] &= 0,
&[e^{(r)}, f^{(s)}] &= (-1)^{|1|}\sum_{a=0}^{r+s-1} \tilde d_1^{(a)} d_2^{(r+s-1-a)},\\
[e^{(r)}, e^{(s)}] &= 0,
&[d_i^{(r)}, e^{(s)}] &=(-1)^{|1|}
\sum_{a=0}^{r-1} d_i^{(a)} e^{(r+s-1-a)},\\
[f^{(r)}, f^{(s)}] &= 0,
&[d_i^{(r)}, f^{(s)}] &=-(-1)^{|1|}
\sum_{a=0}^{r-1} f^{(r+s-1-a)} d_i^{(a)}.
\end{align*}
Here $d_i^{(0)} = 1$
and $\tilde d_i^{(r)}$ is
defined recursively from
$\sum_{a=0}^r \tilde d_i^{(a)} d_i^{(r-a)} = \delta_{r,0}$.
\end{Theorem}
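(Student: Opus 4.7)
The plan is to follow the standard reduction from an RTT-type presentation to a Drinfeld-type presentation: first verify that the displayed relations hold in $Y$, then show that the abstract superalgebra $\widetilde{Y}$ defined by the same generators and relations is isomorphic to $Y$ via a PBW argument.

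For the verification step I would work with generating series rather than individual coefficients. The Gauss factorization \eqref{form1}--\eqref{form4} expresses each $t_{i,j}(u)$ as a short product in $d_1(u), d_2(u), e(u), f(u)$ and, conversely, recovers these Drinfeld series rationally from the $t_{i,j}(u)$'s. Substituting these expressions into the RTT supercommutator from the defining presentation of $Y$ produces closed-form series identities for $[d_i(u), d_j(v)]$, $[d_i(u), e(v)]$, $[d_i(u), f(v)]$, $[e(u), e(v)]$, $[f(u), f(v)]$, and $[e(u), f(v)]$. Reading off the coefficient of $u^{-r}v^{-s}$ then yields each of the six displayed relations. The bracket $[d_1(u), d_1(v)] = 0$ is immediate; the commutation of $d_2(u)$ with $d_1(v)$ and with itself uses the interpretation of $d_2(u)$ as a super-quantum-minor of $T(u)$. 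The bracket $[e(u), f(v)]$ is the most delicate: inverting $t_{1,1}(u)$ introduces the series $\tilde d_1(u)$, which is precisely what appears on the right-hand side.

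For the completeness step, the verification furnishes a superalgebra homomorphism $\widetilde{Y} \to Y$ sending abstract generators to their namesakes. This map is surjective because \eqref{form3}--\eqref{form4} recovers $T(u)$ from the Drinfeld series. For injectivity I would establish a PBW theorem for $\widetilde{Y}$: the six defining relations allow one to rewrite any monomial in the Drinfeld generators as a $\C$-linear combination of ordered supermonomials in $\{f^{(r)}\} \cup \{d_1^{(r)}\} \cup \{d_2^{(r)}\} \cup \{e^{(r)}\}$, with the odd $e^{(r)}$ and $f^{(r)}$ appearing to at most the first power. Then I would equip $Y$ with a natural degree filtration whose associated graded is a free supercommutative algebra on symbols $\overline{t}_{i,j}^{(r)}$, and use \eqref{form1}--\eqref{form2} to identify the leading symbols of the ordered supermonomials in $d_i^{(r)}, e^{(r)}, f^{(r)}$ as algebraically independent generators of that graded algebra. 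This shows the spanning set is in fact a basis of $Y$, so the surjection is an isomorphism.

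Remark~\ref{nodef} reduces the non-standard parity sequence to $(\0,\1)$, so it suffices to argue in the case $|1| = \0$, where $(-1)^{|1|} = 1$ and the signs coincide with those in the usual conventions for $Y(\gl_{1|1})$. The main obstacle is the PBW theorem for $\widetilde{Y}$: verifying that the six listed relations suffice to straighten every monomial into ordered form requires a careful induction on total degree, and establishing linear independence of the straightened monomials in the associated graded demands unwinding the Gauss factorization modulo lower filtration degree.
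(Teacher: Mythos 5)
The paper does not actually prove this statement: it is quoted verbatim from Gow \cite[Theorem 3]{G}, so there is no internal argument to compare with. Your sketch is, in outline, exactly the proof in that reference (and in the purely even prototype of Brundan--Kleshchev): verify the six relations by substituting the Gauss factorization (\ref{form1})--(\ref{form2}) into the RTT relations, get surjectivity of $\widetilde{Y}\to Y$ from (\ref{form3})--(\ref{form4}), and prove injectivity by straightening monomials in $\widetilde Y$ and matching them against a PBW basis of $Y$. That plan is sound, and your reduction via Remark~\ref{nodef} to the parity sequence $(\0,\1)$ is legitimate, since under $t_{i,j}^{(r)}\mapsto(-1)^rt_{i,j}^{(r)}$ the Drinfeld series transform by $u\mapsto -u$ and the sign $(-1)^{|1|}$ in the relations flips consistently. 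One economy you are missing: the linear-independence half of your PBW step need not be redone by ``unwinding the Gauss factorization modulo lower filtration degree.'' The paper's route is to use the Lie filtration, under which $\gr' Y\cong U(\gl_{1|1}[x])$ and $\gr'_{r-1}d_i^{(r)}$, $\gr'_{r-1}e^{(r)}$, $\gr'_{r-1}f^{(r)}$ coincide with the symbols of $t_{i,i}^{(r)}$, $t_{1,2}^{(r)}$, $t_{2,1}^{(r)}$; combined with Theorem~\ref{pbw} this immediately gives Corollary~\ref{C:pbw}, i.e.\ ordered supermonomials in the Drinfeld generators form a basis of $Y$. What genuinely remains to be written out in your approach is the straightening (spanning) argument inside the abstract algebra $\widetilde Y$ and the explicit series computation of $[e(u),f(v)]$, which is where the central series $\tilde d_1(u)d_2(u)$ of Remark~\ref{center} appears; both are routine but lengthy, which is presumably why the paper simply cites \cite{G}.
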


\begin{Remark}\rm\label{center}
By \cite[Theorem 4]{G} the coefficients
$\{c^{(r)}\:|\:r > 0\}$ of
the power series
$c(u) = \sum_{r \geq 0} c^{(r)} u^{-r} \coloneqq d_1(u) \tilde{d}_2(u)$
generate the center of $Y$.
Hence, so do the coefficients
$\{\tilde c^{(r)}\:|\:r > 0\}$ of
the power series
\begin{equation}\label{tildec}
\tilde c(u) = \sum_{r \geq 0} \tilde c^{(r)} u^{-r} \coloneqq \tilde d_1(u) d_2(u).
\end{equation}
Moreover,
$[e^{(r)}, f^{(s)}] = (-1)^{|1|}\tilde{c}^{(r+s-1)}$, so
these supercommutators are central.
\end{Remark}

\begin{Remark}
Using the relations in Theorem~\ref{T:gens}, one can check that
$Y$
admits
an algebra
 automorphism
\begin{align}\label{zetadef}
\zeta:Y &\rightarrow Y,\qquad
d_1^{(r)} \mapsto
\tilde{d}_2^{(r)},
d_2^{(r)} \mapsto
\tilde{d}_1^{(r)},
e^{(r)} \mapsto
-f^{(r)},
f^{(r)} \mapsto
-e^{(r)}.
\end{align}
By \cite[Proposition 4.3]{G},
this satisfies
\begin{equation}\label{goodthing}
\Delta \circ \zeta =
P \circ (\zeta \otimes \zeta) \circ \Delta
\end{equation}
where
$P(x \otimes y) = (-1)^{|x||y|} y \otimes x$.
\end{Remark}

\begin{Proposition}\label{comultrem}
The comultiplication $\Delta$
is given on Drinfeld generators by the following:
\begin{align*}
\Delta(d_1(u)) &= d_1(u) \otimes d_1(u) + d_1(u) e(u)
\otimes f(u) d_1(u),\\
\Delta(\tilde{d}_1(u)) &=
   \sum_{n \geq 0} (-1) ^{\lceil n/2 \rceil} e(u)^n \tilde d_1(u) \otimes
           \tilde d_1(u) f(u)^n,
\end{align*}\begin{align*}
\Delta(d_2(u)) &= \sum_{n \geq 0}
(-1)^{\lfloor n/2 \rfloor} d_2(u) e(u)^n \otimes f(u)^n d_2(u),\\
\Delta(\tilde{d}_2(u)) &=\tilde d_2(u) \otimes \tilde d_2(u) - e(u)
\tilde d_2(u) \otimes \tilde d_2(u) f(u),
\end{align*}\begin{align*}
\Delta(e(u)) &= 1 \otimes e(u) - \sum_{n \geq 1}
(-1)^{\lceil n/2\rceil}
e(u)^n \otimes \tilde d_1(u) f(u)^{n-1} d_2(u),\\
\Delta(f(u)) &= f(u) \otimes 1 - \sum_{n \geq 1} (-1)^{\lceil n/2 \rceil}
d_2(u)e(u)^{n-1}\tilde d_1(u) \otimes f(u)^n.
\end{align*}
\end{Proposition}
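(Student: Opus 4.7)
The plan is to derive all six formulas from the RTT coproduct \eqref{comult} and the Gauss factorizations \eqref{form3}--\eqref{form4}, using the automorphism $\zeta$ of \eqref{zetadef} together with relation \eqref{goodthing} to reduce the work to three genuine computations. I suppress the argument $u$ throughout; all identities live in $(Y \otimes Y)\llbracket u^{-1}\rrbracket$.

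For $\Delta(d_1)$ I substitute $t_{1,1} = d_1$, $t_{1,2} = d_1 e$, $t_{2,1} = f d_1$ into $\Delta(t_{1,1}) = t_{1,1} \otimes t_{1,1} + t_{1,2} \otimes t_{2,1}$ to obtain the first formula immediately, and rewrite it as
\[
\Delta(d_1) = (d_1 \otimes 1)(1 + e \otimes f)(1 \otimes d_1).
\]
Inverting this gives $\Delta(\tilde d_1) = (1 \otimes \tilde d_1)(1 + e \otimes f)^{-1}(\tilde d_1 \otimes 1)$. The crucial combinatorial input is that since $e, f$ are odd, the super sign rule $(a \otimes b)(c \otimes d) = (-1)^{|b||c|}\, ac \otimes bd$ yields $(e \otimes f)^n = (-1)^{\lfloor n/2 \rfloor} e^n \otimes f^n$, so the geometric series $\sum_{n \geq 0}(-1)^n (e \otimes f)^n$ collapses to $\sum_{n \geq 0} (-1)^{\lceil n/2 \rceil} e^n \otimes f^n$ via the parity identity $n + \lfloor n/2 \rfloor \equiv \lceil n/2 \rceil \pmod 2$. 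Distributing the (even, hence commuting) outer factors of $\tilde d_1$ recovers the second formula.

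For $\Delta(e)$ I would use $\Delta(e) = \Delta(\tilde d_1)\,\Delta(t_{1,2})$ with $\Delta(t_{1,2}) = d_1 \otimes d_1 e + d_1 e \otimes d_2 + d_1 e \otimes f d_1 e$ and distribute. After invoking $\tilde d_1 d_1 = 1$, the contributions of the first and third summands have the shape $e^n \otimes \tilde d_1 f^n d_1 e$ with appropriate signs and telescope against one another under the shift $n \mapsto n+1$, leaving only the $n=0$ term $1 \otimes e$; the middle summand recombines into the stated sum after one reindexing and a further application of the same floor/ceiling parity identity. The remaining three formulas are then corollaries: since $\zeta(d_1) = \tilde d_2$, $\zeta(\tilde d_1) = d_2$, $\zeta(e) = -f$, $\zeta(f) = -e$ and $\zeta^2 = \id$, relation \eqref{goodthing} rearranges to
\[
\Delta(\tilde d_2) = P\bigl((\zeta\otimes\zeta)\Delta(d_1)\bigr),\ \Delta(d_2) = P\bigl((\zeta\otimes\zeta)\Delta(\tilde d_1)\bigr),\ \Delta(f) = -P\bigl((\zeta\otimes\zeta)\Delta(e)\bigr),
\]
and a short computation shows that the factor $(-1)^{n^2} = (-1)^n$ from $P$, combined with the $(-1)^n$ from $\zeta(e^n)$ or $\zeta(f^n)$, converts $\lceil n/2 \rceil$ to $\lfloor n/2 \rfloor$ in the $\Delta(d_2)$ identity and leaves $\lceil n/2 \rceil$ intact in the $\Delta(\tilde d_2)$ and $\Delta(f)$ identities.

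The main obstacle throughout is the sign bookkeeping, in particular the computation $(e \otimes f)^n = (-1)^{\lfloor n/2 \rfloor} e^n \otimes f^n$ and the associated floor/ceiling parity identities; once those are set up, both the telescoping in the $\Delta(e)$ calculation and the $\zeta$-symmetry deductions are mechanical.
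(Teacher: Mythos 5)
Your proposal is correct and follows essentially the same route as the paper: compute $\Delta(d_1(u))$, $\Delta(\tilde d_1(u))$ and $\Delta(e(u))$ directly from (\ref{comult}) and (\ref{form1})--(\ref{form2}), then transport these to the remaining three generators via the automorphism $\zeta$ and the identity (\ref{goodthing}). Your sign bookkeeping (the identity $(e(u)\otimes f(u))^n = (-1)^{\lfloor n/2\rfloor}e(u)^n\otimes f(u)^n$, the floor/ceiling parity conversions, and the telescoping in the $\Delta(e(u))$ computation) checks out and matches the stated formulae.
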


\begin{proof}
Check the formulae for $d_1(u), \tilde d_1(u)$ and $e(u)$
directly using (\ref{comult}) and (\ref{form1})--(\ref{form2}).
The other formulae then follow using (\ref{goodthing}).
\end{proof}

Here is the {\em PBW theorem} for $Y$.

\begin{Theorem}[{\cite[Theorem 1]{G}}]\label{pbw}
Order the set $\{t_{i,j}^{(r)} \:|\: 1 \leq i,j \leq 2, r > 0\}$
in some way.
The ordered supermonomials in these generators
give a basis for $Y$.
\end{Theorem}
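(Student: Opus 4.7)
\medskip
\noindent\textbf{Proof proposal.}
The plan is a filtration argument: grade $Y$ so that the associated graded becomes supercommutative, deduce spanning by a reordering induction, and then prove linear independence by constructing a faithful representation detecting the symbols of ordered supermonomials.

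First define the \emph{loop filtration} on $Y$ by placing $t_{i,j}^{(r)}$ in filtered degree $r$ and letting $F_d Y$ be the span of all products of generators of total degree $\leq d$. A direct inspection of the defining supercommutator relation confirms that every summand $t_{p,j}^{(a)} t_{i,q}^{(r+s-1-a)}$ on its right-hand side has filtered degree $r+s-1 < r+s$, so the symbols of the generators supercommute in $\gr Y$. An induction on filtered degree using the same relations then rewrites any product as a linear combination of ordered supermonomials, proving the spanning half of the theorem. Equivalently, there is a surjective superalgebra map
$$
\pi : S \twoheadrightarrow \gr Y,
$$
where $S$ is the free supercommutative superalgebra on symbols $\bar t_{i,j}^{(r)}$ of parity $|i|+|j|$ in degree $r$, and the remaining task is to show $\pi$ is injective.

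For this I would use iterated comultiplication composed with shifted evaluation. The standard spectral-shift automorphism $\tau_c$ of $Y$ defined on generators by $T(u) \mapsto T(u+c)$ (its existence being an immediate check against the RTT relations) allows one to form, for each $N \geq 1$ and distinct $c_1,\ldots,c_N \in \C$, the composite
$$
\Phi_N := \ev^{\otimes N} \circ (\tau_{c_1} \otimes \cdots \otimes \tau_{c_N}) \circ \Delta^{(N-1)} : Y \longrightarrow U(\gl_{1|1})^{\otimes N}.
$$
A direct calculation using \eqref{comult} and \eqref{eval} shows that at the level of associated graded algebras $\Phi_N$ sends $\bar t_{i,j}^{(r)}$ to $(-1)^{|i|}\sum_{k=1}^N c_k^{r-1} \cdot (e_{i,j})_{(k)}$, where $(e_{i,j})_{(k)}$ denotes $e_{i,j}$ in the $k$th tensor slot. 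Choosing $N$ large and the $c_k$ generic, Vandermonde plus the PBW theorem for the super-symmetric algebra $S(\gl_{1|1})^{\otimes N}$ separates any finite collection of ordered supermonomials, forcing $\pi$ to be injective.

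The main obstacle is the super-sign bookkeeping in the last step: tracking parities through $\Delta^{(N-1)}$, whose coproduct \eqref{comult} already distributes summation over all tensor factors, and verifying that the claimed leading-degree image of $\bar t_{i,j}^{(r)}$ is genuinely nonzero. The remaining steps---filtration compatibility and the reordering induction---are entirely routine once the degree bound on the right-hand side of the defining relation is observed.
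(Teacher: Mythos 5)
Your overall strategy (spanning from a filtration with supercommutative associated graded, independence from maps into $U(\gl_{1|1})^{\otimes N}$ built from iterated comultiplication and evaluation) is the same family of argument as the proof the paper relies on: Theorem~\ref{pbw} is simply quoted from Gow, whose proof uses the \emph{unshifted} evaluation maps $\ev^N$, the explicit formula (\ref{new}), and a leading-term argument in the standard filtration of $U(\gl_{1|1})^{\otimes N}$ (this is recalled and reused in the base case of the proof of Lemma~\ref{ind}). Your spanning half is fine. The independence half, however, rests on a false computation. For the filtration you define ($t_{i,j}^{(r)}$ in degree $r$; this is the paper's Kazhdan filtration) and the standard filtration on $U(\gl_{1|1})^{\otimes N}$, the composite $\Phi_N$ is indeed filtered, but the induced map of associated graded algebras sends the degree-$r$ symbol $\bar t_{i,j}^{(r)}$ to the degree-$r$ component of $\Phi_N(t_{i,j}^{(r)})$, namely
$$
\sum_{1 \leq k_1 < \cdots < k_r \leq N}\ \sum_{1 \leq h_1,\dots,h_{r-1}\leq 2}
\bar e_{i,h_1}^{[k_1]} \bar e_{h_1,h_2}^{[k_2]} \cdots \bar e_{h_{r-1},j}^{[k_r]},
$$
exactly as in (\ref{new}); the shift parameters $c_k$ do not appear in it at all. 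Indeed $\tau_c(t_{i,j}^{(r)}) \in t_{i,j}^{(r)} + F_{r-1}Y$, so each $\tau_c$ preserves this filtration and induces the identity on $\gr Y$: inserting the shifts changes nothing at the graded level. The element you claim as the image, $(-1)^{|i|}\sum_k c_k^{r-1}(e_{i,j})_{(k)}$, is the component of \emph{lowest} degree (linear in the matrix units), not the symbol; a graded map cannot send a degree-$r$ element to a degree-$1$ element. Consequently the Vandermonde separation step, as formulated, never gets started.

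The idea can be repaired, but not with the bookkeeping you describe, and the repair is precisely the nontrivial part you dismiss as routine. Either drop the shifts and prove directly that ordered supermonomials in the true symbols (\ref{new}) are linearly independent in $S(\gl_{1|1})^{\otimes N}$ for $N$ large (this combinatorial step is the actual content of Gow's proof), or keep the shifts but isolate your linear terms legitimately: for instance treat $c_1,\dots,c_N$ as polynomial indeterminates and extract the component of top degree in the $c$'s, noting that a term of $\Phi_N(t_{i,j}^{(r)})$ involving $t$ matrix units has $c$-degree $r-t$, so the linear term is the $c$-leading one; equivalently, work with the Lie filtration $\deg t_{i,j}^{(r)} = r-1$, whose associated graded is a quotient of $U(\gl_{1|1}[x])$ and hence is \emph{not} supercommutative, so it cannot also serve your spanning step, and for which $\Phi_N$ is not filtered against the standard filtration without the extra $c$-grading. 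In such a corrected setting, the $c$-leading part of the image of an ordered supermonomial of maximal Lie degree is the corresponding ordered supermonomial in the elements $\sum_k c_k^{r-1}(e_{i,j})_{(k)}$, which for distinct $c_k$ and $r \leq N$ form a basis of $\gl_{1|1}^{\oplus N}$ by Vandermonde, and ordinary PBW for $U(\gl_{1|1}^{\oplus N})$ finishes the argument. As written, though, your two halves use incompatible filtrations and the key graded computation is wrong, so the proposal does not yet establish linear independence.
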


There are two important filtrations on $Y$.
First we have the {\em Kazhdan filtration} which is defined
by declaring that the generator
$t_{i,j}^{(r)}$ is in degree $r$, i.e.\ the
filtered degree $r$ part $F_r Y$ of $Y$
with respect to the Kazhdan
filtration is the span of all monomials in the generators
of the form $t_{i_1,j_1}^{(r_1)} \cdots t_{i_n, j_n}^{(r_n)}$
such that $r_1 + \cdots + r_n \leq r$.
The defining relations imply that the
associated graded superalgebra $\gr Y$ is supercommutative.
Let $\mathfrak{gl}_{1|1}[x]$ denote the current Lie superalgebra
$\mathfrak{gl}_{1|1}(\C) \otimes_\C \C[x]$
with basis $\{e_{i,j} x^r\:|\:1 \leq i,j \leq 2, r \geq 0\}$.
Then Theorem~\ref{pbw}
implies that
$\gr Y$
 can be identified with the
symmetric superalgebra $S(\mathfrak{gl}_{1|1}[x])$ of the vector superspace
$\mathfrak{gl}_{1|1}[x]$
so that $\gr_r t_{i,j}^{(r)} = (-1)^{|i|}e_{i,j} x^{r-1}$.

The other filtration on $Y$, which we call the {\em Lie filtration}, is defined similarly by declaring that
$t_{i,j}^{(r)}$ is in degree $r-1$.
In this case we denote the filtered degree $r$ part of $Y$ by
$F'_r Y$ and the associated graded superalgebra by $\gr' Y$.
By Theorem~\ref{pbw} and the defining relations once again,
$\gr' Y$
can be identified with the
universal enveloping superalgebra
$U(\mathfrak{gl}_{1|1}[x])$ so that $\gr_{r-1}' t_{i,j}^{(r)}=(-1)^{|i|}e_{i,j} x^{r-1}$.
The Drinfeld generators
$d_i^{(r)}, e^{(r)}$ and
$f^{(r)}$ all lie in $F_{r-1}' Y$ and we have that
$$
\gr_{r-1}' d_i^{(r)}=
\gr_{r-1}' t_{i,i}^{(r)},
\qquad
\gr_{r-1}' e^{(r)}
=\gr_{r-1}' t_{1,2}^{(r)},
\qquad
\gr_{r-1}' f^{(r)} =
\gr_{r-1}' t_{2,1}^{(r)}.
$$
(The situation for the Kazhdan filtration is more
complicated:
although $d_i^{(r)}, e^{(r)}$ and
$f^{(r)}$ do all lie in $F_r Y$, their images in $\gr_r Y$ are not
in general equal to the images of $t_{i,i}^{(r)}, t_{1,2}^{(r)}$ or
$t_{2,1}^{(r)}$, but they can expressed in terms of them
via (\ref{form1})--(\ref{form2}).)

Combining the preceding discussion of the Lie filtration with
Theorem~\ref{pbw}, we obtain the following
basis for $Y$ in terms of Drinfeld generators.
(One can also deduce this by working with the Kazhdan filtration
and using (\ref{form1})--(\ref{form4}).)

\begin{Corollary} \label{C:pbw}
Order the set $\{d_i^{(r)}\:|\:i=1,2, r > 0\}
\cup \{e^{(r)}, f^{(r)} \:|\: r > 0\}$ in some way.
The ordered supermonomials in these generators
give a basis for $Y$.
\end{Corollary}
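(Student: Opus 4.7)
The strategy is exactly the one hinted at in the paragraph preceding the corollary: transfer the PBW theorem (Theorem~\ref{pbw}) through the Lie filtration to obtain the analogous basis in Drinfeld generators.

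First I would record that by the identification $\gr' Y \cong U(\mathfrak{gl}_{1|1}[x])$, the ordered supermonomials in the basis $\{e_{i,j}x^{r-1}\}$ of $\mathfrak{gl}_{1|1}[x]$ form a basis for $\gr' Y$ by the PBW theorem for $U(\mathfrak{gl}_{1|1}[x])$ (applied to the Lie superalgebra $\mathfrak{gl}_{1|1}[x]$, whose only odd basis elements $e_{1,2}x^{r-1}, e_{2,1}x^{r-1}$ square to zero in the enveloping algebra, explaining the ordered supermonomial condition). Next, using the identities
\[
\gr'_{r-1} d_i^{(r)} = \gr'_{r-1} t_{i,i}^{(r)},\quad \gr'_{r-1} e^{(r)} = \gr'_{r-1} t_{1,2}^{(r)},\quad \gr'_{r-1} f^{(r)} = \gr'_{r-1} t_{2,1}^{(r)}
\]
stated in the paragraph above the corollary, the images of the Drinfeld generators $d_i^{(r)}, e^{(r)}, f^{(r)}$ in $\gr' Y$ agree (up to the signs $(-1)^{|i|}$) with the basis elements $e_{i,i}x^{r-1}, e_{1,2}x^{r-1}, e_{2,1}x^{r-1}$ of $\mathfrak{gl}_{1|1}[x]$. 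Hence the ordered supermonomials in these images form a basis for $\gr' Y$.

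With this in hand, the corollary follows from the standard lifting principle. For spanning: any $y \in F'_r Y$ has associated graded $\gr'_r y \in \gr'_r Y$ expressible as a linear combination of ordered supermonomials of Lie degree $r$ in the images of the Drinfeld generators; subtracting off the corresponding linear combination of ordered supermonomials in the Drinfeld generators themselves yields an element of $F'_{r-1} Y$, and induction on $r$ (with $F'_{-1} Y = 0$) finishes. For linear independence: any nontrivial linear dependence among ordered supermonomials in Drinfeld generators would, upon passing to the top Lie-filtration degree and projecting to $\gr' Y$, yield a nontrivial linear dependence among ordered supermonomials in the images, contradicting the PBW basis established in the previous step.

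There is essentially no obstacle here: the only point requiring any care is that the filtration-to-graded argument must respect the super structure, i.e.\ one must fix once and for all an ordering on the Drinfeld generators so that the associated ordering of the images of generators in $\gr' Y$ is used consistently when invoking PBW for $U(\mathfrak{gl}_{1|1}[x])$; and one must remember that $e^{(r)}$ and $f^{(r)}$ are odd so they appear with exponent at most one in an ordered supermonomial, matching the parity of $e_{1,2}x^{r-1}$ and $e_{2,1}x^{r-1}$ in $\mathfrak{gl}_{1|1}[x]$.
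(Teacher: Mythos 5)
Your argument is correct and is essentially the paper's own: the authors deduce the corollary precisely by combining the PBW theorem (Theorem~\ref{pbw}) with the observation that under the Lie filtration the Drinfeld generators have the same images in $\gr' Y \cong U(\mathfrak{gl}_{1|1}[x])$ as the corresponding $t_{i,j}^{(r)}$, and you have simply written out the standard filtered-to-graded lifting details. (The paper also notes an alternative route via the Kazhdan filtration and (\ref{form1})--(\ref{form4}), but that is not needed.)
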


Now we are ready to introduce the
{\em shifted Yangians}
for $\mathfrak{gl}_{1|1}(\C)$.
This parallels the definition of shifted Yangians in the purely even
case from
\cite[$\S$2]{BK}.
Let
$\sigma = (s_{i,j})_{1 \leq i,j \leq 2}$
be a $2 \times 2$ matrix of non-negative integers with
$s_{1,1} = s_{2,2} = 0$.
We refer to such a matrix as a {\em shift matrix}.
Let $Y_\sigma$ be the superalgebra
with even
generators
$\{d_i^{(r)}\:|\:i = 1,2, r >0\}$
and odd generators
$\{e^{(r)}\:|\:r > s_{1,2}\} \cup \{f^{(r)}\:|\:r > s_{2,1}\}$
subject to all of the relations from Theorem~\ref{T:gens}
that make sense,
bearing in mind that we no longer have available
the generators $e^{(r)}$ for $0 < r \leq s_{1,2}$ or $f^{(r)}$ for
$0 < r \leq s_{2,1}$.
Clearly there is a homomorphism $Y_\sigma \to Y$ which sends the generators
of $Y_\sigma$ to the generators with the same name in $Y$.

\begin{Theorem}
Order the set
$\{d_i^{(r)}\:|\:i = 1,2, r > 0\}
\cup
\{e^{(r)}\:|\:r > s_{1,2}\} \cup \{f^{(r)}\:|\:r > s_{2,1}\}$
in some way.
The ordered supermonomials in these generators
give a basis for $Y_\sigma$.
In particular, the homomorphism $Y_\sigma \to Y$ is injective.
\end{Theorem}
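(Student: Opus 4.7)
The plan is a standard combination of straightening with a filtration argument, modeled on the proof of \cite[Theorem 2.1]{BK}. The two main tasks are to show that ordered supermonomials span $Y_\sigma$, and that they remain linearly independent after being mapped into $Y$ by the obvious homomorphism $\pi: Y_\sigma \to Y$.

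For spanning, I induct on Kazhdan degree. The essential point is to check that the relations of Theorem~\ref{T:gens}, rewritten inside $Y_\sigma$, produce only elements expressible through the allowed generators. The brackets $[d_i^{(r)}, d_j^{(s)}]$, $[e^{(r)}, e^{(s)}]$ and $[f^{(r)}, f^{(s)}]$ vanish, and $[e^{(r)}, f^{(s)}]$ expands into a sum of products of $d_i^{(a)}$'s and $\tilde d_i^{(a)}$'s, hence is harmless. The critical checks are for
\[
[d_i^{(r)}, e^{(s)}] = (-1)^{|1|}\sum_{a=0}^{r-1} d_i^{(a)} e^{(r+s-1-a)},
\qquad
[d_i^{(r)}, f^{(s)}] = -(-1)^{|1|}\sum_{a=0}^{r-1} f^{(r+s-1-a)} d_i^{(a)};
\]
the constraints $s > s_{1,2}$ and $s > s_{2,1}$ force every $e^{(r+s-1-a)}$ and $f^{(r+s-1-a)}$ appearing above to satisfy the same strict inequality, so all such terms remain legitimate generators of $Y_\sigma$. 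With this closure property in hand, the usual straightening procedure expresses any product of generators of $Y_\sigma$ as a $\C$-linear combination of ordered supermonomials.

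For linear independence, I use the Lie filtration, placing each of $d_i^{(r)}, e^{(r)}, f^{(r)}$ in filtration degree $r-1$. From the formulas recalled just before Corollary~\ref{C:pbw}, $\pi$ respects the Lie filtrations, and an ordered supermonomial in $Y_\sigma$ of total degree $d$ maps into $F'_d Y$ with leading symbol in $\gr_d' Y \cong U(\gl_{1|1}[x])_d$ equal to the corresponding ordered supermonomial in the elements $(-1)^{|i|}e_{i,i} x^{r-1}$, $(-1)^{|1|}e_{1,2} x^{r-1}$ and $(-1)^{|2|}e_{2,1} x^{r-1}$ of $\gl_{1|1}[x]$. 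The collection of these allowed elements (for $r>0$ in the diagonal case, $r > s_{1,2}$ for $e_{1,2}$, $r > s_{2,1}$ for $e_{2,1}$) is a homogeneous subset of a basis of $\gl_{1|1}[x]$, so by the PBW theorem for $U(\gl_{1|1}[x])$ the corresponding ordered supermonomials are linearly independent in $\gr' Y$. A routine induction on Lie degree then shows that $\pi$ sends distinct ordered supermonomials in $Y_\sigma$ to linearly independent elements of $Y$, which together with the spanning step yields both the basis statement and the injectivity of $\pi$.

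The main obstacle is the closure verification in the spanning step: one must confirm that no defining relation of $Y_\sigma$ ever produces a ``missing'' generator $e^{(r)}$ with $r \leq s_{1,2}$ or $f^{(r)}$ with $r \leq s_{2,1}$. The filtration argument is then essentially formal, but it does require picking the right filtration: the Lie filtration is preferable to the Kazhdan filtration here because it gives clean leading symbols for the Drinfeld generators.
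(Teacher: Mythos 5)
Your proof is correct and takes essentially the same approach as the paper: spanning is seen directly from the defining relations (with exactly the closure check you describe), and linear independence is obtained by mapping the monomials into $Y$ and using the PBW basis of $Y$ in Drinfeld generators. Your Lie-filtration argument simply re-derives what the paper invokes as Corollary~\ref{C:pbw}, so the independence step could be shortened to a citation of that corollary.
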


\begin{proof}
It is easy to see from the defining relations that the monomials span,
and their images in $Y$ are linearly independent by Corollary~\ref{C:pbw}.
\end{proof}

From now on we will identify $Y_\sigma$ with a subalgebra of $Y$
via the
injective homomorphism $Y_\sigma \hookrightarrow Y$.
The Kazhdan and Lie filtrations on $Y$ induce filtrations on
$Y_\sigma$
such that $\gr Y_\sigma \subseteq \gr Y$ and $\gr' Y_\sigma
\subseteq \gr' Y$.
Let $\mathfrak{gl}_{1|1}^\sigma[x]$ be the Lie subalgebra of
$\mathfrak{gl}_{1|1}[x]$ spanned by the vectors
$e_{i,j} x^r$ for $1 \leq i,j \leq 2$ and
$r \geq s_{i,j}$.
Then we have that
$\gr Y_\sigma = S(\mathfrak{gl}_{1|1}^\sigma[x])$
and
$\gr' Y_\sigma = U(\mathfrak{gl}_{1|1}^\sigma[x])$.

\begin{Remark}\label{iotasr}
Given another shift matrix $\sigma' = (s_{i,j}')_{1\leq i,j
  \leq 2}$ with $s_{2,1}'+s_{1,2}' = s_{2,1}+s_{1,2}$ there is an
isomorphism
\begin{equation}\label{iotas}
\iota:Y_\sigma \stackrel{\sim}{\rightarrow} Y_{\sigma'},
\quad
d_i^{(r)}\mapsto d_i^{(r)},
e^{(r)} \mapsto e^{(s_{1,2}'-s_{1,2}+r)},
f^{(r)} \mapsto f^{(s_{2,1}'-s_{2,1}+r)}.
\end{equation}
This follows from the defining relations.
Thus, up to isomorphism, $Y_\sigma$
depends only on the integer $s_{2,1}+s_{1,2} \geq 0$, not on $\sigma$
itself.
Beware though that the isomorphism $\iota$
does not respect the Kazhdan or Lie filtrations.
\end{Remark}

For $\sigma \neq 0$, $Y_\sigma$ is not a Hopf subalgebra of
$Y$. However
there are
some useful comultiplication-like homomorphisms between different shifted
Yangians.
To start with, let
$\sigma^{\up}$ (resp.\ $\sigma^{\lo}$) be the upper (resp.\ lower) triangular
shift  matrix obtained from $\sigma$
by setting $s_{2,1}$ (resp.\ $s_{1,2}$) equal to zero.
Then, by Proposition~\ref{comultrem}, the restriction of the comultiplication $\Delta$ on $Y$
gives a homomorphism
\begin{align}\label{gin}
\Delta: Y_\sigma \rightarrow Y_{\sigma^{\lo}} \otimes
Y_{\sigma^{\up}}.
\end{align}
The remaining comultiplication-like homomorphisms involve the
universal enveloping algebra $U(\mathfrak{gl}_1) = \C[e_{1,1}]$.
Assuming that $s_{1,2} > 0$,
let $\sigma_+$ be the shift matrix obtained from $\sigma$ by
subtracting $1$ from the entry $s_{1,2}$.
Then the relations imply that there is a well-defined
algebra homomorphism
\begin{align}\label{dp}
\Delta_+:Y_\sigma &\rightarrow Y_{\sigma_+} \otimes U(\mathfrak{gl}_1),\\
d_1^{(r)} &\mapsto d_1^{(r)} \otimes 1,&
d_2^{(r)} &\mapsto d_2^{(r)}\otimes 1 +(-1)^{|2|} d_2^{(r-1)}
 \otimes  e_{1,1},
\notag\\
 e^{(r)} &\mapsto e^{(r)} \otimes 1 + (-1)^{|2|}e^{(r-1)} \otimes e_{1,1},\!\!
&f^{(r)} &\mapsto f^{(r)} \otimes 1.\notag
\end{align}
Finally, assuming that
$s_{2,1} > 0$, let $\sigma_-$ be the shift
matrix
obtained from $\sigma$ by subtracting $1$ from $s_{2,1}$.
Then there is an algebra homomorphism
\begin{align}\label{dm}
\Delta_-:Y_{\sigma} &\rightarrow U(\mathfrak{gl}_1) \otimes Y_{\sigma_-},\\
d_1^{(r)} &\mapsto 1 \otimes d_1^{(r)},&
d_2^{(r)} &\mapsto 1 \otimes d_2^{(r)} +(-1)^{|2|} e_{1,1} \otimes d_2^{(r-1)},
\notag\\
f^{(r)} &\mapsto 1 \otimes f^{(r)} +(-1)^{|2|}e_{1,1} \otimes
f^{(r-1)},\!\!
& e^{(r)} &\mapsto 1\otimes e^{(r)}.\notag
\end{align}
If $s_{1,2} > 0$, we denote
$(\sigma^{\up})_+ = (\sigma_+)^{\up}$ by
$\sigma^\up_+$.
If $s_{2,1} > 0$ we
denote $(\sigma^{\lo})_- = (\sigma_-)^{\lo}$ by $\sigma^\lo_-$.
If both $s_{1,2} > 0$ and $s_{2,1} > 0$ we
denote $(\sigma_+)_- = (\sigma_-)_+$ by $\sigma_{\pm}$.

\begin{Lemma}\label{coa}
Assuming that $s_{1,2} > 0$ in the first diagram,
$s_{2,1} > 0$ in the second diagram,
and both $s_{1,2} > 0$ and $s_{2,1} > 0$ in the final diagram,
the following commute:
\begin{align}\label{ca1}
&\begin{CD}
Y_\sigma&@>\Delta_+>>&Y_{\sigma_+} \otimes U(\mathfrak{gl}_1)\\
@V\Delta VV&&@VV\Delta \otimes \id V\\
Y_{\sigma^\lo} \otimes Y_{\sigma^\up}
&@>\id \otimes \Delta_+>>&Y_{\sigma^\lo} \otimes Y_{\sigma^\up_+} \otimes
U(\mathfrak{gl}_1)
\end{CD}\end{align}\begin{align}\label{ca2}
&\begin{CD}
Y_\sigma&@>\Delta>>&Y_{\sigma^\lo} \otimes Y_{\sigma^\up}\\
@V\Delta_-VV&&@VV\Delta_-\otimes\id V\\
U(\mathfrak{gl}_1) \otimes
Y_{\sigma_-}
&@>\id \otimes \Delta>>&U(\mathfrak{gl}_1) \otimes Y_{\sigma^\lo_-} \otimes Y_{\sigma^\up}
\end{CD}\end{align}\begin{align}\label{ca3}
&\begin{CD}
Y_\sigma&@>\Delta_+>>&Y_{\sigma_+} \otimes U(\mathfrak{gl}_1)\\
@V\Delta_-VV&&@VV\Delta_-\otimes\id V\\
U(\mathfrak{gl}_1) \otimes
Y_{\sigma_{-}}
&@>\id \otimes \Delta_+>>&U(\mathfrak{gl}_1) \otimes Y_{\sigma_{\pm}} \otimes U(\mathfrak{gl}_1)
\end{CD}
\end{align}
\end{Lemma}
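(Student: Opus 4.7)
The plan is to verify each diagram by direct computation on the Drinfeld generators $d_i^{(r)}, e^{(r)}, f^{(r)}$ of $Y_\sigma$, since these generate the algebra. The key preliminary observation, which will make the bookkeeping manageable, is that $\Delta_+$ and $\Delta_-$ admit clean multiplicative descriptions at the level of generating series. Setting $\alpha(u) \coloneqq 1 + (-1)^{|2|} u^{-1} e_{1,1} \in U(\mathfrak{gl}_1)\llbracket u^{-1}\rrbracket$, the defining formulas (\ref{dp}) translate to
\[
\Delta_+(d_1(u)) = d_1(u)\otimes 1,\ \ \Delta_+(d_2(u)) = d_2(u)\otimes\alpha(u),\ \ \Delta_+(e(u)) = e(u)\otimes\alpha(u),\ \ \Delta_+(f(u)) = f(u)\otimes 1,
\]
with the mirrored identities for $\Delta_-$ (where now $\alpha(u)$ appears in the first factor for $d_2$ and $f$). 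Because $\Delta_\pm$ are algebra homomorphisms and $\tilde d_1(u) d_1(u)=1$, this also forces $\Delta_+(\tilde d_1(u)) = \tilde d_1(u)\otimes 1$ and $\Delta_-(\tilde d_1(u))= 1\otimes\tilde d_1(u)$. Since $\alpha(u)$ is even and has coefficients in the commutative algebra $U(\mathfrak{gl}_1)$, there are no sign subtleties when iterating tensor products.

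For diagram (\ref{ca3}), the verification on each of $d_1(u),d_2(u),e(u),f(u)$ is immediate: both composites send $d_2(u)$ to $\alpha(u)\otimes d_2(u)\otimes\alpha(u)$, send $e(u)$ to $1\otimes e(u)\otimes\alpha(u)$, send $f(u)$ to $\alpha(u)\otimes f(u)\otimes 1$, and send $d_1(u)$ to $1\otimes d_1(u)\otimes 1$. For diagrams (\ref{ca1}) and (\ref{ca2}), I will plug in the formulas from Proposition~\ref{comultrem} and use multiplicativity. For instance, on $d_2(u)$ in diagram (\ref{ca1}),
\[
(\id\otimes\Delta_+)\Delta(d_2(u)) = \sum_{n\geq 0}(-1)^{\lfloor n/2\rfloor} d_2(u)e(u)^n \otimes \Delta_+\bigl(f(u)^n d_2(u)\bigr),
\]
and $\Delta_+(f(u)^n d_2(u)) = f(u)^n d_2(u)\otimes\alpha(u)$, so the result is precisely $\Delta(d_2(u))\otimes\alpha(u) = (\Delta\otimes\id)\Delta_+(d_2(u))$. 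The computations on $d_1(u)$ and $f(u)$ collapse because $\Delta_+$ acts trivially there; the one for $e(u)$ works the same way as $d_2(u)$, using $\Delta_+(\tilde d_1(u) f(u)^{n-1}d_2(u)) = \tilde d_1(u)f(u)^{n-1}d_2(u)\otimes\alpha(u)$. Diagram (\ref{ca2}) follows by the analogous calculation with the roles of the two tensor factors, $e$ and $f$, and $\Delta_+,\Delta_-$ swapped; alternatively one may deduce it from (\ref{ca1}) by conjugating with $\zeta$ and invoking (\ref{goodthing}), once one verifies that $\zeta$ intertwines $\Delta_+$ with $\Delta_-$ up to the flip $P$.

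The main point that needs care is not any single formula but rather the accounting: the expressions in Proposition~\ref{comultrem} involve infinite series in $e(u)$ and $f(u)$, and one must check that $\Delta_\pm$ distributes correctly through these sums. The multiplicative description above reduces the entire verification to the statement that, inside the right-hand (resp.\ left-hand) tensor factor, $\Delta_+$ (resp.\ $\Delta_-$) is simply right multiplication by $\alpha(u)$ on the $d_2$- and $e$-letters (resp.\ $d_2$- and $f$-letters) and is the identity on the others, so every term in every sum picks up exactly one factor of $\alpha(u)$ in the new outer tensor slot. This makes the two composites in each square visibly equal as formal power series, and hence equal on each coefficient $u^{-r}$.
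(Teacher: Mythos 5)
Your proposal is correct and follows essentially the paper's own (one-line) proof: check the three squares on the Drinfeld generators using (\ref{dp})--(\ref{dm}) and Proposition~\ref{comultrem}, with your series $\alpha(u)$ serving merely as a convenient way to organize that check. The only point to watch is that in the shifted algebra the identity $\Delta_+(e(u))=e(u)\otimes\alpha(u)$ holds only up to the boundary term $(-1)^{|2|}u^{-s_{1,2}-1}e^{(s_{1,2})}\otimes e_{1,1}$, since the $e$-series of $Y_{\sigma_+}$ starts one step lower than that of $Y_\sigma$ (and similarly for $f(u)$ under $\Delta_-$); but this term arises identically in both composites of each square, so your coefficient-wise conclusion is unaffected.
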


\begin{proof}
Check on Drinfeld generators using (\ref{dp})--(\ref{dm})
and Proposition~\ref{comultrem}.
\end{proof}

\begin{Remark}
Writing $\eps:U(\mathfrak{gl}_1) \rightarrow \C$
for the counit, the maps
$(\id \bar\otimes\eps) \circ \Delta_+$
and $(\eps \bar\otimes\id) \circ \Delta_-$
are the natural inclusions
$Y_\sigma\rightarrow Y_{\sigma_+}$
and
$Y_\sigma\rightarrow Y_{\sigma_-}$,
respectively. Hence the maps $\Delta_+$ and $\Delta_-$
are injective.
\end{Remark}

\section{Truncation}

Let $\sigma = (s_{i,j})_{1 \leq i,j \leq 2}$ be a shift matrix.
Suppose also that we are given an integer $l \geq s_{2,1}+s_{1,2}$
and set
$$
k \coloneqq l - s_{2,1}-s_{1,2} \geq 0.
$$
In view of Lemma~\ref{coa},
we can iterate
$\Delta_+$
a total of
$s_{1,2}$ times,
$\Delta_-$
a total of $s_{2,1}$ times, and
$\Delta$ a total of $(k-1)$ times
in any order that makes sense
(when $k=0$ this means we apply the
counit $\eps$ once at the very end) to obtain a well-defined homomorphism
$$
\Delta^{l}_{\sigma}:Y_\sigma \to U(\mathfrak{gl}_1)^{\otimes s_{2,1}}
\otimes Y^{\otimes k} \otimes U(\mathfrak{gl}_1)^{\otimes s_{1,2}}.
$$
For example, if $\sigma =
\left(\begin{array}{ll}0&2\\1&0\end{array}\right)$
then
\begin{align*}
\Delta^{3}_\sigma &= (\id\otimes \eps\bar\otimes\id\otimes\id)\circ
(\Delta_-\otimes\id\otimes\id)\circ(\Delta_+ \otimes \id) \circ \Delta_+,\\
\Delta^{4}_\sigma &
=
(\id \otimes \Delta_+\otimes\id)\circ(\Delta_- \otimes \id) \circ \Delta_+
=
(\id \otimes \Delta_+\otimes\id)\circ(\id \otimes \Delta_+) \circ \Delta_-,\\
\Delta^{5}_\sigma
&=
(\Delta_-\otimes\id\otimes\id\otimes \id)\circ(\id \otimes \Delta_+ \otimes \id) \circ
(\id \otimes \Delta_+)\circ
\Delta\\
&=
(\id \otimes \Delta\otimes\id\otimes \id)\circ(\Delta_- \otimes\id\otimes \id) \circ
(\id \otimes \Delta_+)\circ
\Delta_+.
\end{align*}
Let
\begin{equation}\label{usl}
U^{l}_\sigma \coloneqq
U(\mathfrak{gl}_1)^{\otimes s_{2,1}} \otimes
U(\mathfrak{gl}_{1|1})^{\otimes k} \otimes U(\mathfrak{gl}_1)^{\otimes
  s_{1,2}},
\end{equation}
viewed as a superalgebra using
the usual sign convention.
Recalling (\ref{eval}), we obtain a homomorphism
\begin{equation}\label{Evalhom}
\ev_\sigma^l \coloneqq (\id^{\otimes s_{2,1}} \otimes \ev^{\otimes k} \otimes
\id^{\otimes s_{1,2}}) \circ \Delta^{l}_{\sigma}:Y_\sigma
\to U^{l}_\sigma.
\end{equation}
Let
\begin{equation}\label{shy}
Y_\sigma^{l} \coloneqq \ev_\sigma^l(Y_\sigma) \subseteq U_\sigma^{l}.
\end{equation}
This is the {\em shifted Yangian of level $l$}.

In the special case that $\sigma = 0$ we denote
$\ev_\sigma^l$, $Y_\sigma^{l}$ and $U_\sigma^l$
simply
by $\ev^l$, $Y^{l}$ and $U^l$, respectively, so that
$Y^l = \ev^l(Y) \subseteq U^l$.
We call $Y^l$ the {\em Yangian of level $l$}.
Writing
$\bar e_{i,j}^{[c]} \coloneqq
(-1)^{|i|}1^{\otimes (c-1)} \otimes e_{i,j} \otimes 1^{\otimes(l-c)}$,
we have simply that
\begin{equation}\label{new}
\ev^l(t_{i,j}^{(r)})
=
\sum_{1 < c_1 < \cdots < c_r \leq l}
\sum_{1 \leq h_1,\dots,h_{r-1} \leq 2}
\bar e_{i,h_1}^{[c_1]}
\bar e_{h_1,h_2}^{[c_2]}
\cdots
\bar e_{h_{r-1},j}^{[c_r]}
\end{equation}
for any $1 \leq i,j \leq 2$ and $r \geq 0$.
In particular, $\ev^l(t_{i,j}^{(r)}) = 0$ for $r > l$.
In the proof of \cite[Theorem 1]{G}, Gow shows
that the kernel of $\ev^l:Y \twoheadrightarrow Y^{l}$
is generated by $\{t_{i,j}^{(r)}\:|\:1 \leq i,j \leq 2,
r > l\}$, and moreover the images of the ordered supermonomials in
the remaining elements $\{t_{i,j}^{(r)}\:|\:1 \leq i,j \leq 2, 0 < r
\leq l\}$ give a basis for $Y^{l}$.
(Actually she proves this for all $Y(\mathfrak{gl}_{m|n})$ not just
$Y(\mathfrak{gl}_{1|1})$.)
The goal in this section is to prove analogues of
these statements for $Y_\sigma$
with $\sigma \neq 0$.

Let $I_\sigma^{l}$ be the two-sided ideal of $Y_\sigma$
generated by the elements $d_1^{(r)}$ for $r > k$.

\begin{Lemma}\label{fir}
$I_\sigma^{l} \subseteq \ker \ev_\sigma^l$.
\end{Lemma}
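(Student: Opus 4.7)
Since $\ker \ev_\sigma^l$ is a two-sided ideal of $Y_\sigma$, it suffices to show that $d_1^{(r)} \in \ker \ev_\sigma^l$ for every $r > k$.

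The first step is to recast $\Delta^l_\sigma$ in a convenient ``normal form'' using Lemma~\ref{coa}. Since that lemma shows $\Delta$, $\Delta_+$ and $\Delta_-$ pairwise commute in the appropriate diagrammatic sense, I can iterate (by an easy induction on the total number of applications) to rearrange the composition so that all $s_{1,2}$ copies of $\Delta_+$ and $s_{2,1}$ copies of $\Delta_-$ are applied first, and then the $(k-1)$ copies of $\Delta$ (or the counit if $k=0$) act on the surviving ``middle'' factor. Concretely, I expect to obtain
\[
\Delta^l_\sigma = (\id^{\otimes s_{2,1}} \otimes \Delta^{[k]} \otimes \id^{\otimes s_{1,2}}) \circ \Delta^{l}_{\sigma,0},
\]
where $\Delta^{l}_{\sigma,0} : Y_\sigma \to U(\mathfrak{gl}_1)^{\otimes s_{2,1}} \otimes Y \otimes U(\mathfrak{gl}_1)^{\otimes s_{1,2}}$ is the composite of the $\Delta_\pm$'s and $\Delta^{[k]}:Y\to Y^{\otimes k}$ is the iterated coproduct (with the convention $\Delta^{[0]}=\eps$, $\Delta^{[1]}=\id$).

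Next, I track what $d_1^{(r)}$ does under $\Delta^l_{\sigma,0}$. Reading off from the formulae (\ref{dp}) and (\ref{dm}), we have $\Delta_+(d_1^{(r)}) = d_1^{(r)} \otimes 1$ and $\Delta_-(d_1^{(r)}) = 1 \otimes d_1^{(r)}$. Thus each $\Delta_\pm$ leaves $d_1^{(r)}$ unchanged in the Yangian factor and puts $1$'s in all the $U(\mathfrak{gl}_1)$ factors, so iterating yields
\[
\Delta^l_{\sigma,0}(d_1^{(r)}) = 1^{\otimes s_{2,1}} \otimes d_1^{(r)} \otimes 1^{\otimes s_{1,2}}.
\]

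Finally, applying $\ev^{\otimes k}$ to the middle factor and using $d_1^{(r)} = t_{1,1}^{(r)}$ reduces everything to the $\sigma = 0$ case: we need $(\ev^{\otimes k} \circ \Delta^{[k]})(t_{1,1}^{(r)}) = \ev^k(t_{1,1}^{(r)}) = 0$ for $r > k$. This is exactly what the explicit formula (\ref{new}) gives (the sum is empty since it runs over strictly increasing $c_1 < \dots < c_r$ in $\{1,\dots,k\}$). When $k=0$ this reads simply $\eps(d_1^{(r)}) = 0$ for $r > 0$, which is clear.

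The only non-routine step is the rearrangement in the first paragraph, which amounts to checking that the three commutativity diagrams of Lemma~\ref{coa} can be composed without obstruction; all subsequent steps are immediate from the explicit formulae.
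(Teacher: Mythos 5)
Your proposal is correct and is essentially the paper's own argument: the paper likewise reduces to showing $\ev_\sigma^l(d_1^{(r)})=0$ for $r>k$, uses the order-independence of $\Delta^l_\sigma$ (guaranteed by Lemma~\ref{coa}) to apply all the maps $\Delta_\pm$ first, obtaining $1^{\otimes s_{2,1}}\otimes \ev^k(d_1^{(r)})\otimes 1^{\otimes s_{1,2}}$, and then concludes from $d_1^{(r)}=t_{1,1}^{(r)}$ and the explicit formula (\ref{new}) that this vanishes. Your write-up merely makes the rearrangement step and the $k=0$ case more explicit.
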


\begin{proof}
We need to show that $\ev_\sigma^l(d_1^{(r)}) = 0$ for all $r > k$.
We calculate this by first
applying all the maps
$\Delta_+$ and $\Delta_-$
to deduce that
$$
\ev_\sigma^l (d_1^{(r)}) =
1^{\otimes s_{2,1}} \otimes
\ev^k(d_1^{(r)})
\otimes 1^{\otimes s_{1,2}}.
$$
Since $d_1^{(r)} =t_{1,1}^{(r)}$,
it is then clear from (\ref{new})
that $\ev^k(d_1^{(r)}) = 0$
for $r > k$.
\end{proof}

\begin{Proposition} \label{L:hzero}
The ideal $I_\sigma^{l}$ contains all of the following elements:
\begin{align}
\sum_{s_{1,2} < a \leq r} d_1^{(r-a)} e^{(a)}&\text{ for $r > s_{1,2}+k$;}\label{van1}\\
\sum_{s_{2,1} < b \leq r} f^{(b)} d_1^{(r-b)}&\text{ for $r > s_{2,1}+k$;}\label{van2}\\
 d_2^{(r)} + \sum_{\substack{s_{1,2} < a\\s_{2,1} < b\\ a + b \leq r}}
f^{(b)} d_1^{(r-a-b)} e^{(a)}&\text{ for $r >l$.}\label{van3}
\end{align}
\end{Proposition}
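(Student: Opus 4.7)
The proofs of (\ref{van1}) and (\ref{van2}) follow from a single instance of a Drinfeld relation. For (\ref{van1}), when $r > s_{1,2}+k$ the element $e^{(r-k)}$ is a generator of $Y_\sigma$ (since $r-k > s_{1,2}$), so by Theorem \ref{T:gens}
$$
[d_1^{(k+1)}, e^{(r-k)}] = (-1)^{|1|} \sum_{a=0}^{k} d_1^{(a)} e^{(r-a)}
$$
holds in $Y_\sigma$. The left-hand side lies in $I_\sigma^l$ because $d_1^{(k+1)}$ does, and the additional terms $d_1^{(a)} e^{(r-a)}$ for $k < a \leq r-s_{1,2}-1$ lie in $I_\sigma^l$ trivially. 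Summing these in and substituting $b = r-a$ yields (\ref{van1}). The proof of (\ref{van2}) is identical after replacing $e$ by $f$ and using the relation $[d_1^{(k+1)}, f^{(r-k)}]$.

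For (\ref{van3}), the plan is to compute the supercommutator $[e^{(s_{1,2}+1)}, X]$ in two different ways, where
$$
X := \sum_{s_{2,1} < b \leq r-s_{1,2}} f^{(b)} d_1^{(r-s_{1,2}-b)}
$$
is the element appearing in (\ref{van2}) with $r$ replaced by $r-s_{1,2}$. For $r > l$ we have $r-s_{1,2} > s_{2,1}+k$, so $X \in I_\sigma^l$ by (\ref{van2}); hence $[e^{(s_{1,2}+1)}, X] \in I_\sigma^l$. On the other hand, expanding $[e^{(s_{1,2}+1)}, X]$ via the super Leibniz rule together with $[e^{(s_{1,2}+1)}, f^{(b)}] = (-1)^{|1|} \tilde{c}^{(s_{1,2}+b)}$ (Remark \ref{center}) and $[d_1^{(c)}, e^{(s_{1,2}+1)}] = (-1)^{|1|}\sum_{p=0}^{c-1} d_1^{(p)} e^{(c+s_{1,2}-p)}$ (Theorem \ref{T:gens}) gives $[e^{(s_{1,2}+1)}, X] = (-1)^{|1|}(A+B)$, where
$$
A := \sum_{n=s_{1,2}+s_{2,1}+1}^{r} \tilde{c}^{(n)} d_1^{(r-n)}, \qquad B := \sum_{\substack{s_{1,2} < a,\, s_{2,1} < b\\ a+b \leq r}} f^{(b)} d_1^{(r-a-b)} e^{(a)}.
$$
Here the substitution $n = s_{1,2}+b$ converts the first family of terms into $A$, and the reindexing $(a,b,c) \leftarrow (c+s_{1,2}-p,\, b,\, p)$ (under which the constraints $0 \leq p \leq c-1$ and $s_{2,1} < b \leq r-s_{1,2}$ translate to $a > s_{1,2}$, $b > s_{2,1}$, $c \geq 0$, $a+b+c = r$) converts the second family into $B$.

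Now the identity $d_2(u) = d_1(u) \tilde{c}(u)$ (from Remark \ref{center}) gives $d_2^{(r)} = \sum_{n=0}^{r} \tilde{c}^{(n)} d_1^{(r-n)}$; the terms with $n \leq s_{1,2}+s_{2,1}$ satisfy $r-n > k$ and so lie in $I_\sigma^l$, whence $A \equiv d_2^{(r)} \pmod{I_\sigma^l}$. Therefore $A + B$ is congruent modulo $I_\sigma^l$ to the element $D_r$ of (\ref{van3}), and since $(-1)^{|1|}(A+B) = [e^{(s_{1,2}+1)}, X] \in I_\sigma^l$, we conclude $D_r \in I_\sigma^l$. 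The main obstacle is the calculation in the second paragraph: expanding $[e^{(s_{1,2}+1)}, X]$ requires careful use of the super Leibniz rule and precise tracking of signs, and the combinatorial reindexing identifying the second family of terms with the sum in (\ref{van3}) takes a moment to verify. The underlying strategy, however, is clean: use (\ref{van2}) to produce an element of $I_\sigma^l$, and exploit the centrality of $\tilde{c}(u)$ to extract $d_2^{(r)}$ from its commutator with $e^{(s_{1,2}+1)}$.
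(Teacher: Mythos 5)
Your proof is correct and follows essentially the same route as the paper: both derive (\ref{van1})--(\ref{van2}) from the $[d_1,e]$ and $[d_1,f]$ relations of Theorem~\ref{T:gens} applied to elements of $I_\sigma^l$, and both obtain (\ref{van3}) by commuting one more odd generator against the resulting element and exploiting the central series $\tilde c(u)=\tilde d_1(u)d_2(u)$ to convert the $\tilde c^{(n)}d_1^{(r-n)}$ terms into $d_2^{(r)}$ modulo the ideal. The only differences are cosmetic: you work coefficientwise (commuting the ideal generator $d_1^{(k+1)}$ with varying $e^{(r-k)}$, $f^{(r-k)}$) rather than with generating functions, and for (\ref{van3}) you commute $e^{(s_{1,2}+1)}$ with the element of (\ref{van2}) instead of $f^{(s_{2,1}+1)}$ with that of (\ref{van1}), a mirror image of the paper's computation.
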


\begin{proof}
Consider the algebra
$Y_\sigma\llbracket u^{-1} \rrbracket [u]$ of formal Laurent series in the variable
$u^{-1}$ with coefficients in $Y_\sigma$.
For any such formal Laurent series $p = \sum_{r \leq N} p_r
u^{r}$
we write $[p]_{\geq 0}$ for its polynomial part
$\sum_{r =0}^N p_{r} u^{r}$.
Also write $\equiv$ for congruence
modulo $Y_\sigma[u] + u^{-1}I_\sigma^{l} \llbracket u^{-1}
\rrbracket$,
so $p \equiv 0$ means that the
$u^{r}$-coefficients of $p$ lie in $I_\sigma^{l}$ for all $r < 0$.
Note that if $p \equiv 0, q \in Y_\sigma[u]$, then $pq \equiv 0$.
In this notation, we have by definition of $I_\sigma^{l}$
that $u^{k} d_1(u) \equiv 0$.
Introduce the power series
\begin{align*}
e_\sigma(u) &\coloneqq \sum_{r > s_{1,2}} e^{(r)}
u^{-r},&
f_\sigma(u) &\coloneqq \sum_{r > s_{2,1}} f^{(r)} u^{-r}.
\end{align*}
The proposition is equivalent to the following assertions:
\begin{align}\label{id1}
u^{s_{1,2}+k}d_1(u) e_\sigma(u) &\equiv 0,\\\label{id2}
u^{s_{2,1}+k}f_\sigma(u) d_1(u) &\equiv 0,\\
u^{l} \left(d_2(u) + f_\sigma(u) d_1(u) e_\sigma(u)\right)
&\equiv 0.\label{id3}
\end{align}
For the first two, we use the identities
\begin{align}
(-1)^{|1|} [d_1(u), e^{(s_{1,2}+1)}] &= u^{s_{1,2}} d_1(u)
e_\sigma(u),\label{new1}\\
 (-1)^{|1|} [f^{(s_{2,1}+1)}, d_1(u)] &= u^{s_{2,1}} f_\sigma(u)
d_1(u).\label{new2}
\end{align}
These
are easily checked by considering the $u^{-r}$-coefficients on
each side and using the relations in Theorem~\ref{T:gens}.
Assertions (\ref{id1})--(\ref{id2}) follow from
(\ref{new1})--(\ref{new2})
on multiplying by $u^k$ as $u^k d_1(u) \equiv 0$.
For the final assertion (\ref{id3}),
recall the elements $\tilde c^{(r)}$ from (\ref{tildec}).
Let
$\tilde c_\sigma(u) \coloneqq \sum_{r > s_{2,1}+s_{1,2}} \tilde c^{(r)}
u^{-r}.$
Another routine check using the relations shows that
\begin{equation}\label{new3}
(-1)^{|1|} [f^{(s_{2,1}+1)}, e_\sigma(u)] = u^{s_{2,1}} \tilde
c_\sigma(u).
\end{equation}
Using (\ref{id1}) and (\ref{new2})--(\ref{new3})
we deduce that
\begin{align*}
0 &\equiv
(-1)^{|1|} u^{s_{1,2}+k} [f^{(s_{2,1}+1)}, d_1(u) e_\sigma(u)]\\
&=
u^{s_{1,2}+k} d_1(u) (-1)^{|1|}[f^{(s_{2,1}+1)}, e_\sigma(u)]
+u^{s_{1,2}+k} (-1)^{|1|}[f^{(s_{2,1}+1)}, d_1(u) ]e_\sigma(u)\\
&=
u^{l} d_1(u)
\tilde c_\sigma(u)
+
u^{l} f_\sigma(u) d_1(u)e_\sigma(u).
\end{align*}
To complete the proof of (\ref{id3}), it remains to observe that
$$
u^{s_{2,1}+s_{1,2}}
\tilde c_\sigma(u)
= u^{s_{2,1} + s_{1,2}} \tilde d_1(u) d_2(u)
-[u^{s_{2,1}+s_{1,2}}
\tilde d_1(u) d_2(u)]_{\geq 0}
$$
hence
$u^{l} d_1(u)
\tilde c_\sigma(u)
\equiv u^{l} d_2(u).
$
\end{proof}

For the rest of the section, we fix some total ordering
on the set
\begin{multline}\label{supermonomial}
\Omega \coloneqq \{d_1^{(r)}\:|\:0<r\leq k\} \cup
\{d_2^{(r)}\:|\:0 < r \leq l\}\\
\cup \{e^{(r)}\:|\:s_{1,2} < r \leq s_{1,2}+k\}
\cup \{f^{(r)}\:|\:s_{2,1} < r \leq s_{2,1}+k\}.
\end{multline}

\begin{Lemma}\label{span}
The quotient algebra $Y_\sigma / I_\sigma^{l}$ is spanned by
the images of the
ordered supermonomials in the elements of $\Omega$.
\end{Lemma}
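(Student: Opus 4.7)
The plan is to combine the PBW basis of $Y_\sigma$ from Corollary \ref{C:pbw} with the reduction relations of Proposition \ref{L:hzero}, organized by an induction on the Kazhdan degree. The argument splits naturally into two stages: first, rewrite each Drinfeld generator that lies outside $\Omega$ as a polynomial in the elements of $\Omega$ modulo $I_\sigma^{l}$; second, sort arbitrary products of elements of $\Omega$ into the chosen total order.

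For the first stage, if $r > k$ then $d_1^{(r)} \in I_\sigma^{l}$ by definition. For $e^{(r)}$ with $r > s_{1,2}+k$, separating off the $a=r$ term in (\ref{van1}) and using $d_1^{(0)} = 1$ gives
\begin{equation*}
e^{(r)} \equiv -\sum_{s_{1,2} < a < r} d_1^{(r-a)} e^{(a)} \pmod{I_\sigma^{l}}.
\end{equation*}
Summands with $r-a > k$ vanish modulo $I_\sigma^{l}$, while the surviving $d_1^{(r-a)}$ already lie in $\Omega$ and the factors $e^{(a)}$ have $a < r$; so an induction on $r$ expresses $e^{(r)}$ as a polynomial in $\Omega$. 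Relations (\ref{van2}) and (\ref{van3}) handle $f^{(r)}$ for $r > s_{2,1}+k$ and $d_2^{(r)}$ for $r > l$ in exactly the same way. Via Corollary \ref{C:pbw}, this already shows that $Y_\sigma/I_\sigma^{l}$ is spanned as a vector space by (as yet unordered) products of elements of $\Omega$.

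For the second stage I would induct on the Kazhdan degree. By Theorem \ref{T:gens} the supercommutator of any two Drinfeld generators of Kazhdan degrees $r$ and $s$ is a sum of products of pairs of generators whose Kazhdan degrees sum to $r+s-1$, so swapping two adjacent factors in a product of $\Omega$-elements changes the product only by terms of strictly smaller Kazhdan degree. The main subtlety---and the point that requires some care---is that these correction terms may involve generators lying outside $\Omega$: for instance, the commutator $[d_1^{(r)}, e^{(s)}]$ produces summands $d_1^{(a)} e^{(r+s-1-a)}$ in which $r+s-1-a$ may exceed $s_{1,2}+k$. The way around this is to invoke the first stage again on each such bad generator; since that rewriting preserves the Kazhdan degree while the commutator step strictly lowers it, the outer induction on Kazhdan degree closes and delivers the desired spanning set of ordered supermonomials in $\Omega$. (The restriction that each odd generator appears with exponent at most $1$ comes for free from the relations $[e^{(r)}, e^{(r)}] = [f^{(r)}, f^{(r)}] = 0$ of Theorem \ref{T:gens}.)
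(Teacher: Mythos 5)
Your proposal is correct and is in essence the paper's own argument: both rest on Proposition \ref{L:hzero} together with an induction along the Kazhdan filtration, the paper merely packaging your two-stage rewriting by passing to the associated graded algebra $\gr(Y_\sigma/I_\sigma^{l})$, which is supercommutative, so that the reordering issues you handle by hand disappear and it suffices to observe that the symbols of the generators outside $\Omega$ are polynomials in generators of strictly smaller degree. No changes needed.
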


\begin{proof}
The Kazhdan filtration on $Y_\sigma$ induces a filtration on
$Y_\sigma / I_\sigma^{l}$ with respect to which
$\gr (Y_\sigma / I_\sigma^{l})$ is a graded quotient of
$\gr Y_\sigma$.
We know already that
$\gr Y_\sigma$ is supercommutative,
hence so too is
$\gr (Y_\sigma / I_\sigma^{l})$.
Let $\underline{d}_i^{(r)} \coloneqq \gr_r (d_i^{(r)} + I_\sigma^{l}),
\underline{e}^{(r)} \coloneqq \gr_r (e^{(r)}+ I_\sigma^{l})$ and
$\underline{f}^{(r)} \coloneqq \gr_r (f^{(r)}+I_\sigma^{l})$.

To prove the lemma it is enough to show that
$\gr (Y_\sigma / I_\sigma^{l})$
is generated by
$\{\underline{d}_1^{(r)}\:|\:0<r\leq k\} \cup
\{\underline{d}_2^{(r)}\:|\:0 < r \leq l\}
\cup \{\underline{e}^{(r)}\:|\:s_{1,2} < r \leq s_{1,2}+k\}
\cup \{\underline{f}^{(r)}\:|\:s_{2,1} < r \leq s_{2,1}+k\}$.
This follows because
$\underline{d}_1^{(r)} = 0$ for $r > k$, and
each
of the elements
$\underline{d}_2^{(r)}\:(r > l)$,
$\underline{e}^{(r)}\:(r > s_{1,2}+k)$ and
$\underline{f}^{(r)}\:(r > s_{2,1}+k)$
can be expressed as polynomials in generators of strictly
smaller degrees by Proposition~\ref{L:hzero}.
\end{proof}

\begin{Lemma}\label{ind}
The image under $\ev_\sigma^l$ of the ordered supermonomials in the
elements of $\Omega$
are linearly independent in $Y_\sigma^{l}$.
\end{Lemma}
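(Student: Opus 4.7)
My plan is to reduce the lemma to Gow's theorem by working in the Kazhdan-associated graded and deriving an explicit Gauss-factored formula for $\ev_\sigma^l$. First, each of $\Delta$, $\Delta_+$, $\Delta_-$, and $\ev$ preserves Kazhdan degree by direct inspection of (\ref{comult}), (\ref{dp}), (\ref{dm}), and (\ref{eval}); hence $\ev_\sigma^l$ is Kazhdan-filtered when $U_\sigma^l$ is equipped with the filtration placing every matrix unit in degree $1$, and it suffices to prove the stronger statement that the Kazhdan symbols of the generators in $\Omega$ are algebraically independent (in the super sense) in the supercommutative algebra $\gr U_\sigma^l$.

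Second, I would derive a clean matrix formula for $\ev_\sigma^l$. Rewriting (\ref{dp}) as right multiplication $\Delta_+(T(u))=T(u)\cdot\mathrm{diag}(1,\,1+u^{-1}(-1)^{|2|}e_{1,1})$ (with the new $e_{1,1}$ in the appended $U(\gl_1)$ factor), and (\ref{dm}) analogously as left multiplication, then iterating in conjunction with $\Delta(T(u))=T(u)\dot{\otimes}T(u)$ and applying $\ev^{\otimes k}$ on the middle tensor factors, one obtains
\begin{equation*}
\ev_\sigma^l(T(u)) \;=\; \mathrm{diag}(1,Q(u))\cdot\ev^k(T(u))\cdot\mathrm{diag}(1,P(u)),
\end{equation*}
where $Q(u)=\prod_{c=1}^{s_{2,1}}(1+u^{-1}(-1)^{|2|}e_{1,1}^{[c]})$ lives in the leftmost $s_{2,1}$ copies of $U(\gl_1)$ and $P(u)$ is the analogous product on the right. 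Taking the Gauss factorization of each side gives
\begin{align*}
\ev_\sigma^l(d_1(u)) &= \ev^k(d_1(u)), & \ev_\sigma^l(e(u)) &= \ev^k(e(u))\,P(u),\\
\ev_\sigma^l(d_2(u)) &= Q(u)\,\ev^k(d_2(u))\,P(u), & \ev_\sigma^l(f(u)) &= Q(u)\,\ev^k(f(u)),
\end{align*}
using that $Q(u), P(u)$ are even and occupy tensor factors disjoint from those of $\ev^k(\cdot)$.

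With these formulas in hand, I would finish as follows. The Kazhdan symbols of $Q_a, P_c$ for $0<a\leq s_{2,1}$, $0<c\leq s_{1,2}$ are (up to sign) the elementary symmetric polynomials in the commuting variables $\bar e_{1,1}^{[c]}$ of the outer factors, and hence freely generate polynomial subalgebras. By Gow's theorem applied at level $k$ (the $\sigma=0$ case of the lemma, obtained from her result on $t$-generators via the Kazhdan-degree-preserving Gauss change of variables), the $4k$ symbols of $\ev^k(d_i^{(r)}),\ev^k(e^{(r)}),\ev^k(f^{(r)})$ for $0<r\leq k$ freely generate a super-polynomial subalgebra of $\gr U^k$. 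Tensoring these ingredients yields a free super-polynomial subalgebra $B\subseteq\gr U_\sigma^l$ on $4k+s_{2,1}+s_{1,2}=|\Omega|$ generators, and the symbols of the elements of $\Omega$ all lie in $B$ by the Gauss-factored formulas above. The change of generators between these two sets is unitriangular on each Kazhdan graded piece---for instance, $\gr\ev_\sigma^l(d_2^{(r)})$ for $k<r\leq l$ has a distinguished leading contribution forced into the top outer products $\bar Q_a$ or $\bar P_c$ combined with Gow's top generator $\bar{\hat d}_2^{(k)}$, and analogously for $\gr\ev_\sigma^l(e^{(r)})$ and $\gr\ev_\sigma^l(f^{(r)})$---so the symbols of $\Omega$ themselves freely generate $B$, yielding the required algebraic independence. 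The main obstacle will be making this unitriangularity rigorous: while the Gauss-factored formulas expose the leading behaviour, systematically identifying a distinguishing leading monomial for each of the $|\Omega|$ generators requires careful combinatorial bookkeeping, which is cleanest if organized via the generating-function identities together with the classical algebraic independence of elementary symmetric polynomials.
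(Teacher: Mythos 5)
Your overall skeleton is sound and, up to the last step, is essentially the paper's: you pass to symbols with respect to the filtration on $U_\sigma^l$ putting each matrix unit in degree one, reduce the lemma to algebraic independence of the symbols of the elements of $\Omega$ from (\ref{supermonomial}), and handle the $\sigma=0$ case by Gow's argument plus the degree-preserving Gauss change of variables (\ref{form1})--(\ref{form4}), exactly as in the paper. Your Gauss-factored formulas for $\ev_\sigma^l$ are also correct in the coefficient ranges that matter (a small caveat: since only the truncated series $\sum_{r>s_{1,2}}e^{(r)}u^{-r}$ lives in $Y_\sigma$, an identity like $\Delta_+(e_\sigma(u))=e_{\sigma_+}(u)\otimes(1+(-1)^{|2|}u^{-1}e_{1,1})-e^{(s_{1,2})}u^{-s_{1,2}}\otimes 1$ has a correction term, so your multiplicative formulas must be verified coefficientwise for $r>s_{1,2}$, $r>s_{2,1}$, which is easily done by induction using $\ev_\sigma^l=(\ev_{\sigma_+}^{l-1}\otimes\id)\circ\Delta_+$).

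The genuine gap is the endgame. The symbols of the elements of $\Omega$ do \emph{not} freely generate $B$, and no ``unitriangular change of generators'' between the two sets can exist, because the two sets have different multisets of degrees and parities; equivalently, the subalgebra they generate is proper. Concretely take $k=1$, $s_{1,2}=1$, $s_{2,1}=0$, $l=2$, and write $\hat d_1,\hat d_2,\hat e,\hat f$ for the degree-one symbols of $\ev^1(d_1^{(1)}),\ev^1(d_2^{(1)}),\ev^1(e^{(1)}),\ev^1(f^{(1)})$ (up to signs, the matrix units of $\gl_{1|1}$) and $p$ for the symbol of $e_{1,1}$ in the outer $U(\gl_1)$ factor. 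Then the symbols of the five elements of $\Omega$ are $\hat d_1$, $\hat d_2\pm p$, $\hat f$, $\pm\hat d_2\,p\pm\hat e\hat f$ and $\pm\hat e\,(p\pm\hat d_1)$: the degree-one part of the subalgebra they generate is $3$-dimensional, whereas $B$ has five degree-one generators, so in particular $\hat e$ and $p$ are not in that subalgebra and free generation of $B$ is impossible (the Poincar\'e series cannot match). What you actually need is only algebraic independence of these symbols \emph{inside} $B$, and that is precisely the content you defer to ``careful combinatorial bookkeeping'': it can be done by a leading-monomial argument for a term order in which the outer variables dominate, but the leading monomials are composite (e.g.\ $\bar Q_{s_{2,1}}\bar P_{s_{1,2}}\hat d_2^{(b)}$), so one must verify they are multiplicatively independent and that no cancellation occurs --- this is the real work, not a formality. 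The paper avoids this global bookkeeping by inducting on $s_{1,2}+s_{2,1}$: writing $\ev_\sigma^l=(\ev_{\sigma_+}^{l-1}\otimes\id)\circ\Delta_+$, each symbol becomes $\dot{x}\otimes 1+(\pm)\dot{x}'\otimes x$ with $x=\gr_1 e_{1,1}$, and the induction hypothesis for $Y_{\sigma_+}^{l-1}$ supplies exactly the independence statement your sketch leaves open; as written, your proposal does not yet contain a proof of that step.
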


\begin{proof}
Consider the standard filtration on
$U_\sigma^{l}$ generated by declaring that all the
elements of the form $1 \otimes\cdots\otimes 1 \otimes
x \otimes 1 \otimes\cdots\otimes 1$ for $x \in \mathfrak{gl}_1$
or $\mathfrak{gl}_{1|1}$ are in degree $1$.
It induces a filtration on $Y_\sigma^{l}$
so that $\gr Y_\sigma^{l}$ is a graded subalgebra of $\gr
U_\sigma^{l}$.
Note that $\gr U_\sigma^{l}$ is supercommutative, hence so is
the subalgebra $\gr Y_\sigma^{l}$.
Each of the elements $\ev_\sigma^l(d_i^{(r)}), \ev_\sigma^l(e^{(r)})$ and $\ev_\sigma^l(f^{(r)})$
are in filtered degree $r$ by the definition of
$\ev_\sigma^l$.
Let $\underline{d}_i^{(r)} \coloneqq \gr_r(\ev_\sigma^l(d_i^{(r)})),
\underline{e}^{(r)} \coloneqq \gr_r(\ev_\sigma^l(e^{(r)}))$ and
$\underline{f}^{(r)} \coloneqq \gr_r(\ev_\sigma^l(f^{(r)}))$.

Let $M$ be the set
of ordered supermonomials in
$\{\underline{d}_1^{(r)}\:|\:0<r\leq k\} \cup
\{\underline{d}_2^{(r)}\:|\:0 < r \leq l\}
\cup \{\underline{e}^{(r)}\:|\:s_{1,2} < r \leq s_{1,2}+k\}
\cup \{\underline{f}^{(r)}\:|\:s_{2,1} < r \leq s_{2,1}+k\}$.
To prove the lemma, it suffices to show that $M$
is linearly independent in $\gr Y_\sigma^{l}$.
For this,
we proceed by induction on $s_{2,1}+s_{1,2}$.

To establish the base case $s_{2,1}+s_{1,2} = 0$, i.e.\ $\sigma = 0$,
$Y_\sigma = Y$ and $Y_\sigma^{l} = Y^{l}$.
Let $\underline{t}_{i,j}^{(r)}$ denote $\gr_r (\ev_\sigma^l(t_{i,j}^{(r)}))$.
Fix a total order on
$\{\underline{t}_{i,j}^{(r)}\:|\: 1 \leq i,j \leq 2, 0 < r \leq l\}$ and
let $M'$ be the resulting
set of ordered supermonomials.
Exploiting the explicit formula (\ref{new}),
Gow shows in the proof of \cite[Theorem 1]{G} that $M'$ is linearly
independent.
By (\ref{form1})--(\ref{form4}), any element of $M$ is a linear
combination of elements of $M'$ of the same degree,
and vice versa. So we deduce that $M$ is linearly independent too.

For the induction step, suppose that $s_{2,1}+s_{1,2} > 0$.
Then we either have $s_{2,1} > 0$ or $s_{1,2} > 0$.
We just explain the argument for the latter case; the proof in the former
case is entirely similar replacing $\Delta_+$ with $\Delta_-$.
Recall that $\sigma_+$ denotes the shift matrix obtained from $\sigma$
by subtracting $1$ from $s_{1,2}$.
So $U_{\sigma}^{l} = U_{\sigma_+}^{l-1} \otimes U(\mathfrak{gl}_1)$.
By its definition,
we have that $\ev_\sigma^l
= (\ev^{l-1}_{\sigma_+} \otimes \id) \circ \Delta_+$,
hence
$Y_\sigma^{l} \subseteq
Y_{\sigma_+}^{l-1} \otimes U(\mathfrak{gl}_1)$.
Let $x \coloneqq \gr_1 e_{1,1} \in \gr U(\mathfrak{gl}_1)$.
Then
\begin{align*}
\underline{d}_1^{(r)} &= \dot{\underline{d}}_1^{(r)} \otimes 1,&
\underline{d}_2^{(r)} &= \dot{\underline{d}}_2^{(r)} \otimes 1 + (-1)^{|2|}\dot{\underline{d}}_2^{(r-1)} \otimes x,\\
\underline{f}^{(r)} &= \dot{\underline{f}}^{(r)} \otimes 1,
&\underline{e}^{(r)} &= \dot{\underline{e}}^{(r)} \otimes 1 + (-1)^{|2|}\dot{\underline{e}}^{(r-1)} \otimes x.
\end{align*}
The notation is potentially confusing here so we have decorated
elements of
 $\gr Y_{\sigma_+}^{l-1} \subseteq \gr U_{\sigma_+}^{l-1}$
with a dot.
It remains to observe
from the
induction hypothesis applied to $\gr Y_{\sigma_+}^{l-1}$
that ordered supermonomials in
$\{\dot{\underline{d}}_1^{(r)}\otimes 1\:|\:0 < r \leq k\}
\cup \{\dot{\underline{d}}_2^{(r-1)}\otimes x\:|\:0 < r \leq l\}
\cup \{\dot{\underline{e}}^{(r-1)} \otimes x\:|\:s_{1,2} < r \leq s_{1,2}+k\}
\cup \{\dot{\underline{f}}^{(r)}\otimes 1\:|\:0 < r < s_{1,2}+k\}$
are linearly independent.
\end{proof}

\begin{Theorem}\label{truncated}
The kernel of $\ev_\sigma^l:Y_\sigma\rightarrow Y_\sigma^{l}$
is equal to the two-sided ideal $I_\sigma^{l}$
generated by the elements $\{d_1^{(r)}\:|\:r > k\}$.
Hence $\ev_\sigma^l$ induces an algebra isomorphism
between $Y_\sigma / I_\sigma^{l}$ and
$Y_\sigma^{l}$.
\end{Theorem}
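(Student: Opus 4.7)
The plan is to deduce the theorem by combining the three preceding ingredients: Lemma~\ref{fir}, Lemma~\ref{span}, and Lemma~\ref{ind}. Concretely, Lemma~\ref{fir} gives the inclusion $I_\sigma^{l} \subseteq \ker \ev_\sigma^l$, so the evaluation map factors through the quotient to induce a surjective homomorphism
\[
\bar\ev_\sigma^l : Y_\sigma / I_\sigma^{l} \twoheadrightarrow Y_\sigma^{l}.
\]
The remaining content of the theorem is equivalent to the statement that $\bar\ev_\sigma^l$ is injective, which in turn will follow once we identify a common spanning/independent set on the two sides.

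First I would fix a total ordering on $\Omega$ as in (\ref{supermonomial}) and consider the set $S$ of ordered supermonomials in $\Omega$. By Lemma~\ref{span}, the images of $S$ under the quotient map already span $Y_\sigma / I_\sigma^{l}$ (this is exactly where Proposition~\ref{L:hzero} was used to cut down the generating set from all Drinfeld generators to those indexed by the bounded ranges appearing in $\Omega$). On the other hand, Lemma~\ref{ind} asserts precisely that $\ev_\sigma^l$ sends the elements of $S$ to linearly independent elements of $Y_\sigma^{l}$.

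Putting these together: if $x \in \ker \ev_\sigma^l$, then the image $\bar x$ of $x$ in $Y_\sigma / I_\sigma^{l}$ can be written as a finite linear combination $\bar x = \sum_i c_i \bar{m}_i$ with $m_i \in S$ by Lemma~\ref{span}. Applying $\bar\ev_\sigma^l$, we get $0 = \ev_\sigma^l(x) = \sum_i c_i \ev_\sigma^l(m_i)$, so by Lemma~\ref{ind} all $c_i = 0$, hence $\bar x = 0$ and $x \in I_\sigma^{l}$. This shows $\ker \ev_\sigma^l \subseteq I_\sigma^{l}$, yielding equality.

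There is essentially no obstacle left once Lemmas~\ref{span} and~\ref{ind} are in hand --- all of the real work was carried out in Proposition~\ref{L:hzero} (producing enough relations in $I_\sigma^{l}$ to bound the size of $Y_\sigma / I_\sigma^{l}$ from above) and in Lemma~\ref{ind} (the PBW-style argument, reducing via $\Delta_+$ or $\Delta_-$ to the unshifted case $\sigma=0$ handled by Gow). The present theorem is just the formal combination. I would also note as a corollary of the argument that the ordered supermonomials in $\Omega$ descend to a basis of $Y_\sigma^{l} \cong Y_\sigma/I_\sigma^{l}$, since the spanning set of Lemma~\ref{span} is mapped by $\bar\ev_\sigma^l$ to the linearly independent set of Lemma~\ref{ind}.
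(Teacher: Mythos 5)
Your proposal is correct and follows exactly the paper's own argument: Lemma~\ref{fir} gives the induced surjection $Y_\sigma/I_\sigma^{l}\twoheadrightarrow Y_\sigma^{l}$, which carries the spanning set of Lemma~\ref{span} onto the linearly independent set of Lemma~\ref{ind}, forcing it to be an isomorphism. Your closing remark that both sets are thereby bases is likewise what the paper records (and later states as Corollary~\ref{pbwt}).
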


\begin{proof}
By Lemma~\ref{fir}, $\ev_\sigma^l$ induces a surjection
$Y_\sigma / I_\sigma^{l} \twoheadrightarrow Y_\sigma^{l}$.
It maps the spanning set from Lemma~\ref{span}
onto the linearly independent set from Lemma~\ref{ind}.
Hence it is an isomorphism and both sets are actually bases.
\end{proof}

Henceforth we will {\em identify} $Y_\sigma^{l}$ with
the quotient $Y_\sigma / I_\sigma^{l}$, and
we will abuse notation by denoting the canonical images in $Y_\sigma^{l}$
of the elements
$d_i^{(r)}, e^{(r)} \dots$ of $Y_\sigma$
by the same symbols $d_i^{(r)}, e^{(r)}, \dots$.
This will not cause any confusion as we will not work
with $Y_\sigma$ again.

Here is the PBW theorem for $Y_\sigma^{l}$, which was noted already in the
proof of Theorem~\ref{truncated}.

\begin{Corollary}\label{pbwt}
Order
the set
$\{d_1^{(r)}\:|\:0<r\leq k\} \cup
\{d_2^{(r)}\:|\:0 < r \leq l\}
\cup \{e^{(r)}\:|\:s_{1,2} < r \leq s_{1,2}+k\}
\cup \{f^{(r)}\:|\:s_{2,1} < r \leq s_{2,1}+k\}$
in some way.
The ordered supermonomials in these
elements
give a basis for $Y_\sigma^{l}$.
\end{Corollary}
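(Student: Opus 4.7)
The plan is to read this corollary directly off the work already done for Theorem~\ref{truncated}. Having identified $Y_\sigma^{l}$ with $Y_\sigma/I_\sigma^{l}$, the statement reduces to the observation that a spanning set which is also linearly independent is automatically a basis, and both ingredients are already available.

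More precisely, I would first invoke Lemma~\ref{span}: for any chosen total ordering of the set $\Omega$ from (\ref{supermonomial}), the ordered supermonomials in its elements span $Y_\sigma/I_\sigma^{l}$, and hence they span $Y_\sigma^{l}$ under the identification fixed after Theorem~\ref{truncated}. I would then invoke Lemma~\ref{ind}: the images under $\ev_\sigma^l$ of these same ordered supermonomials—which, after the identification, are literally the elements themselves—are linearly independent in $Y_\sigma^{l}$. A spanning family that is linearly independent is a basis, and the corollary is proved.

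There is no real obstacle remaining at this stage; the technical content was already absorbed into the two preceding lemmas. Lemma~\ref{span} used the vanishing relations (\ref{van1})--(\ref{van3}) from Proposition~\ref{L:hzero}, combined with supercommutativity of the Kazhdan associated graded of $Y_\sigma/I_\sigma^{l}$, to prune the generating set down to $\Omega$. Lemma~\ref{ind} established linear independence by an induction on $s_{1,2}+s_{2,1}$, peeling off one tensor factor of $U(\mathfrak{gl}_1)$ at a time via $\Delta_+$ or $\Delta_-$, and reducing to the base case $\sigma=0$, which is Gow's theorem proved in \cite{G} via the explicit formula (\ref{new}). Accordingly, writing out the proof is simply a matter of citing Lemmas~\ref{span} and~\ref{ind} and appealing to Theorem~\ref{truncated}.
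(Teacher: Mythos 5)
Your proposal is correct and matches the paper's own argument: the corollary is exactly what was observed in the proof of Theorem~\ref{truncated}, where the surjection $Y_\sigma/I_\sigma^{l} \twoheadrightarrow Y_\sigma^{l}$ induced by $\ev_\sigma^l$ maps the spanning set of Lemma~\ref{span} onto the linearly independent set of Lemma~\ref{ind}, so both are bases. Nothing further is needed.
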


\begin{Remark}
In the arguments in this section, we have defined {\em two}
filtrations on
$Y_\sigma^{l}$, one in the proof of Lemma~\ref{span}
induced by the Kazhdan filtration on $Y_\sigma$, the other in the
proof of
Lemma~\ref{ind} induced by the standard filtration on
$U_\sigma^{l}$.
Using Corollary~\ref{pbwt}, one can check that these two filtrations
coincide.
\end{Remark}

\section{Principal $W$-algebras}\label{spyr}

We turn to the $W$-algebra side of the story.
Let $\pi$ be a (two-rowed) {\em pyramid}, that is,
a collection of boxes
in the plane arranged in two connected rows
such that each box in the
first (top) row lies directly above a box in the second (bottom) row.
For example, here are all the pyramids with two boxes in the first row
and five in the second:
$$
\begin{array}{c}
\begin{picture}(60,24) \put(0,0){\line(1,0){60}}
\put(0,12){\line(1,0){60}} \put(0,24){\line(1,0){24}}
\put(0,0){\line(0,1){24}} \put(12,0){\line(0,1){24}}
\put(24,0){\line(0,1){24}} \put(36,0){\line(0,1){12}}
\put(48,0){\line(0,1){12}}
\put(60,0){\line(0,1){12}}
\end{picture}
\end{array}
,\:\:\:\:
\begin{array}{c}
\begin{picture}(60,24) \put(0,0){\line(1,0){60}}
\put(0,12){\line(1,0){60}} \put(12,24){\line(1,0){24}}
\put(0,0){\line(0,1){12}} \put(12,0){\line(0,1){24}}
\put(24,0){\line(0,1){24}} \put(36,0){\line(0,1){24}}
\put(48,0){\line(0,1){12}}
\put(60,0){\line(0,1){12}}
\end{picture}
\end{array},\:\:\:\:
\begin{array}{c}
\begin{picture}(60,24) \put(0,0){\line(1,0){60}}
\put(0,12){\line(1,0){60}} \put(24,24){\line(1,0){24}}
\put(0,0){\line(0,1){12}} \put(12,0){\line(0,1){12}}
\put(24,0){\line(0,1){24}} \put(36,0){\line(0,1){24}}
\put(48,0){\line(0,1){24}}
\put(60,0){\line(0,1){12}}
\end{picture}\end{array}
,\:\:\:\:
\begin{array}{c}
\begin{picture}(60,24) \put(0,0){\line(1,0){60}}
\put(0,12){\line(1,0){60}} \put(36,24){\line(1,0){24}}
\put(0,0){\line(0,1){12}} \put(12,0){\line(0,1){12}}
\put(24,0){\line(0,1){12}} \put(36,0){\line(0,1){24}}
\put(48,0){\line(0,1){24}}
\put(60,0){\line(0,1){24}}
\end{picture}
\end{array}.
$$
Let $k$ (resp.\ $l$) denote
the number of boxes in the first (resp.\ second) row of $\pi$,
so that $k \leq l$.
The parity sequence fixed in (\ref{parseq}) allows us to talk about the
parities of the rows of $\pi$: the $i$th row is
of parity $|i|$.
Let $m$ be the number of boxes in the even row, i.e.\ the row with
parity $\0$, and $n$ be the number of boxes in the odd row, i.e.\ the
row with parity $\1$.
Then label the boxes in the even (resp.\ odd)
row from left to right by the numbers
$1,\dots,m$ (resp.\ $m+1,\dots,m+n$).
For example
here is one of the above pyramids with boxes labelled in this way
assuming that $(|1|,|2|) = (\1,\0)$, i.e.\ the bottom row is even and
the top row is odd:
\begin{equation}\label{piex}
\begin{array}{c}
\begin{picture}(60,24) \put(0,0){\line(1,0){60}}
\put(0,12){\line(1,0){60}} \put(12,24){\line(1,0){24}}
\put(0,0){\line(0,1){12}} \put(12,0){\line(0,1){24}}
\put(24,0){\line(0,1){24}} \put(36,0){\line(0,1){24}}
\put(48,0){\line(0,1){12}}
\put(60,0){\line(0,1){12}}
\put(18,18){\makebox(0,0){{6}}} \put(30,18){\makebox(0,0){{7}}}
\put(6,6){\makebox(0,0){{1}}}
\put(18,6){\makebox(0,0){{2}}}
\put(30,6){\makebox(0,0){{3}}}
\put(42,6){\makebox(0,0){{4}}}
\put(54,6){\makebox(0,0){{5}}}
\end{picture}
\end{array}.
\end{equation}
Numbering the columns of $\pi$ by $1,\dots,l$ in order from left to
right, we write $\row(i)$ and $\col(i)$ for the row and column numbers
of the $i$th box in this labelling.

Now let $\mathfrak{g} \coloneqq \mathfrak{gl}_{m|n}(\C)$ for $m$ and $n$
coming from the pyramid $\pi$ and the fixed parity sequence
as in the previous paragraph.
Let $\mathfrak{t}$ be the Cartan subalgebra consisting of all diagonal
matrices and $\eps_1,\dots,\eps_{m+n} \in \mathfrak{t}^*$
be the basis such that $\eps_i(e_{j,j}) =
\delta_{i,j}$ for each $j=1,\dots,m+n$.
The {\em supertrace form}
$(.|.)$ on $\g$ is  the
non-degenerate invariant supersymmetric bilinear form
defined from $(x|y) =
\operatorname{str}(xy)$, where the supertrace $\operatorname{str} A$ of matrix
$A = (a_{i,j})_{1 \leq i,j \leq m+n}$
means $a_{1,1}+\cdots+a_{m,m}-a_{m+1,m+1}-\cdots-a_{m+n,m+n}$.
It induces a
bilinear form $(.|.)$ on $\mathfrak{t}^*$
such that
$(\eps_i|\eps_j) = (-1)^{|\row(i)|} \delta_{i,j}$.

We have the explicit principal nilpotent element
\begin{equation}
e \coloneqq \sum_{i,j}e_{i,j}
\in \g_\0
\end{equation}
summing over all adjacent pairs
$\Diagram{ $\scriptstyle i$ & $\scriptstyle j$ \cr}$
of boxes in the pyramid $\pi$.
In the example above, we
have that $e = e_{1,2} + e_{2,3} + e_{3,4} + e_{4,5} + e_{6,7}$.
Let $\chi \in \g^*$ be defined by $\chi(x) \coloneqq (x|e)$.
If we set
\begin{equation}\label{bare}
\bar e_{i,j} \coloneqq (-1)^{|\row(i)|}e_{i,j},
\end{equation}
then we have that
\begin{equation}
\chi(\bar e_{i,j}) =
\begin{cases} 1 & \text{if
$\Diagram{ $\scriptstyle j$ & $\scriptstyle i$ \cr}$
is an adjacent pair of boxes in $\pi$,} \\
0 & \text{otherwise}.
\end{cases}
\end{equation}
Introduce a $\Z$-grading
$\g = \bigoplus_{r \in \Z} \g(r)$
by declaring that $e_{i,j}$ is of degree
\begin{equation}\label{dege}
\deg(e_{i,j}) \coloneqq \col(j)- \col(i).
\end{equation}
This is a {\em good grading} for $e$, which means that $e \in \g(1)$
and the centralizer $\g^e$ of $e$ in $\g$
is contained in $\bigoplus_{r \ge 0} \g(r)$; see \cite{H} for
more about good gradings on Lie superalgebras (one should double the degrees
of our grading to agree with the terminology there).
Set
$$
\p \coloneqq \bigoplus_{r \ge 0} \g(r) \qquad \h \coloneqq \g(0),\qquad
\m \coloneqq\bigoplus_{r < 0} \g(r).
$$
Note that $\chi$ restricts to a character of $\m$.
Let
$\m_\chi \coloneqq \{x-\chi(x) \:|\: x \in \m\}$,
which is a shifted copy of $\mathfrak{m}$ inside $U(\mathfrak{m})$.
Then the {\em principal $W$-algebra} associated to the pyramid $\pi$
is
\begin{equation}\label{wpidef}
W_\pi \coloneqq \{u \in U(\p) \:|\:  u \m_\chi \subseteq \m_\chi U(\g) \}.
\end{equation}
It is straightforward to check that $W_\pi$ is a subalgebra of $U(\p)$.

The first important result
about $W_\pi$ is its {\em PBW theorem}.
This is noted already in
\cite[Remark 3.10]{Zhao}, where it is described for arbitrary basic
classical Lie superalgebras modulo a mild assumption on $e$ (which is
trivially satisfied here).
To formulate the result precisely,
embed $e$ into an $\mf{sl}_2$-triple $(e,h,f)$
 in $\g_\0$, such that $h \in \g(0)$ and $f \in \g(-1)$.
It follows from $\mf{sl}_2$ representation theory
that
\begin{equation} \label{e:psum}
\p = \g^e \oplus [\p^\perp,f],
\end{equation}
where $\p^\perp = \bigoplus_{r > 0} \g(r)$ denotes the nilradical of $\p$.
Also introduce
the {\em Kazhdan filtration} on $U(\p)$,
which is generated by declaring for each $r \geq 0$ that $x \in \g(r)$
is of Kazhdan degree
$r+1$.
The associated graded superalgebra $\gr U(\p)$ is supercommutative and
is naturally identified with the symmetric superalgebra $S(\p)$, viewed as a
positively
graded algebra via the analogously defined {\em Kazhdan grading}.
The Kazhdan filtration on
$U(\mathfrak{\p})$ induces a Kazhdan filtration on $W_\pi \subseteq U(\p)$
so that $\gr W_\pi\subseteq\gr U(\p) = S(\p)$.

\begin{Theorem}\label{T:WPBW}
Let $p:S(\p) \to S(\g^e)$
be the homomorphism induced by the projection of $\p$ onto $\g^e$
along (\ref{e:psum}).
The restriction of $p$
defines an isomorphism of Kazhdan-graded superalgebras
$\gr W_\pi
\stackrel{\sim}{\rightarrow}
S(\g^e).$
\end{Theorem}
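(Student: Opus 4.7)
My plan is to follow the standard Premet-style argument for the PBW theorem of finite $W$-algebras, adapted to the super setting as sketched by Zhao \cite[Remark 3.10]{Zhao}. First I would extend the Kazhdan filtration from $U(\p)$ to all of $U(\g)$ by declaring $x \in \g(r)$ to have degree $r+1$ for every $r \in \Z$, so that $\gr U(\g) = S(\g)$ carries an induced Kazhdan grading. The proof then breaks into three steps: (a) $\gr W_\pi \subseteq S(\p)^\m$, where $\m$ acts on $S(\p) \subseteq S(\g)$ by the adjoint action; (b) the restriction of $p$ to $S(\p)^\m$ is a Kazhdan-graded algebra isomorphism onto $S(\g^e)$; and (c) the induced map $\gr W_\pi \to S(\g^e)$ is surjective.

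For step (a), given $u \in W_\pi$ of Kazhdan degree $d$ and $x \in \m$, the defining membership $u(x-\chi(x)) \in \m_\chi U(\g)$ together with $u \in U(\p)$ implies, after a direct symbol computation in $\gr U(\g) = S(\g)$, that $[x, \gr_d u] = 0$ in $S(\g)$; since $\gr_d u \in S(\p)$, this puts it into $S(\p)^\m$. For step (b), the $\mf{sl}_2$-triple $(e,h,f)$ yields the direct sum decomposition $\p = \g^e \oplus [\p^\perp, f]$ by $\mf{sl}_2$-representation theory, and the nondegenerate supertrace form produces a perfect pairing between $\m$ and $\p^\perp$. Combining these ingredients via a standard Koszul-type argument shows that every $\m$-invariant in $S(\p)$ has a unique image under $p$ and every element of $S(\g^e)$ arises this way, giving the claimed isomorphism. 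The super signs cause no new difficulty because the supertrace form is supersymmetric and invariant.

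Step (c) is the main obstacle. For each homogeneous $y \in \g^e$ of Kazhdan degree $d$, I would inductively construct an element $\Theta_y \in W_\pi$ with $p(\gr_d \Theta_y) = y$: begin with any degree-$d$ lift $\tilde y \in U(\p)$ of $y$, measure the failure of $\tilde y$ to lie in $W_\pi$ by computing the symbol of $\tilde y(x-\chi(x))$ modulo $\m_\chi U(\g)$ for each $x \in \m$, and use the isomorphism of step (b) to cancel the resulting defects by subtracting off correction terms of strictly lower Kazhdan degree. Because the Kazhdan filtration on $U(\p)$ is bounded below within each fixed weight space, this iterative procedure terminates in finitely many steps and produces the desired $\Theta_y$. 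The delicate point is ensuring super-commutator signs do not obstruct the cancellation at each stage; however, because the Kazhdan grading is compatible with the $\Z/2$-grading on $\g$, all signs track through exactly as in Premet's even-case argument, so no genuinely new super phenomena arise, matching the observation of \cite[Remark 3.10]{Zhao}.
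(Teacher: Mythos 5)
Your step (a) is false as stated, and the error propagates into (b). For $u \in W_\pi$ of Kazhdan degree $d$ and $x \in \m$, the membership $[x,u] \in \m_\chi U(\g)$ only tells you that the symbol $[x,\gr_d u]$ lies in the associated graded ideal $\gr(\m_\chi U(\g))$, i.e.\ in the ideal $I_\chi$ of $S(\g)$ generated by $\{y - \chi(y)\:|\:y \in \m\}$ (the ideal of functions vanishing on $e+\m^\perp$); it need not vanish in $S(\g)$. As a sanity check, for the principal nilpotent $e = e_{1,2}$ in $\gl_2(\C)$ the element $u = e_{1,1}e_{2,2} - e_{1,2} + e_{2,2}$ satisfies $[u,e_{2,1}] = (e_{2,1}-1)(e_{1,1}-e_{2,2})$, so it lies in the right-handed $W$-algebra, yet $[e_{2,1}, \gr_2 u] = -(e_{2,1}-1)(e_{1,1}-e_{2,2}) \neq 0$ in $S(\g)$; it vanishes only modulo $I_\chi$. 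Moreover $S(\p)$ is not even stable under $\operatorname{ad}\m$ (already $[e_{2,1},e_{1,1}] = e_{2,1} \notin \p$ there), so ``$S(\p)^{\m}$'' with the plain adjoint action is not the right invariant space. The correct formulation is that $\gr_d u$, viewed in $S(\g)/I_\chi \cong \C[e+\m^\perp] \cong S(\p)$, is invariant under the induced (Whittaker-twisted) $\m$-action, and step (b) must then be the assertion that restriction gives $\C[e+\m^\perp]^{\m} \cong \C[e+\g^e] \cong S(\g^e)$.

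More seriously, the two places where the real work happens are asserted rather than proved. The isomorphism in (b) is not a formal ``Koszul-type'' consequence of $\p = \g^e \oplus [\p^\perp,f]$ and nondegeneracy of the supertrace form: in Gan--Ginzburg it rests on the fact that the action map $M \times (e+\g^e) \to e+\m^\perp$ is an isomorphism of varieties, whose proof uses the linear-algebra decomposition \emph{together with} the contracting $\C^\times$-action coming from the positivity of the good grading (and in the super case one must explain what replaces the group $M$ for the odd part of $\m$ --- this is exactly what ``superizing'' \cite{GG} as in \cite[Remark 3.10]{Zhao} means). In step (c), the claim that each defect can be cancelled by a correction of strictly lower Kazhdan degree does not follow from (b): (b) is a statement about invariants ($H^0$), whereas solving $[x,v] \equiv \mathrm{defect}_x$ for all $x \in \m$ is a first-cohomology problem for the twisted $\m$-action on $\C[e+\m^\perp]$, and the required vanishing of $H^1$ (equivalently, the freeness of $\C[e+\m^\perp]$ over $\m$ coming from the same geometric isomorphism, used in \cite{GG} via a spectral sequence) is precisely what makes your induction close up. So your outline has the right overall shape --- it is the argument the paper delegates to \cite{GG} and \cite{Zhao} --- but as written it misstates the invariance in (a)/(b) and omits the geometric/cohomological input that constitutes the actual content of the theorem.
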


\begin{proof}
Superize the arguments in \cite{GG}
as suggested in \cite[Remark 3.10]{Zhao}.
\end{proof}

In order to apply Theorem~\ref{T:WPBW}, it is helpful to have available an
explicit basis for the centralizer $\g^e$.
We say that a shift matrix $\sigma = (s_{i,j})_{1 \leq i,j \leq 2}$
is {\em compatible with $\pi$}
if either $k > 0$ and $\pi$ has $s_{2,1}$ columns of height one on its
left side
and $s_{1,2}$ columns of height one on its right side,
or if $k = 0$ and $l = s_{2,1}+s_{1,2}$.
These conditions determine a unique shift matrix $\sigma$
when $k > 0$, but there is some
minor ambiguity if $k = 0$ (which should never cause any concern).
For example if $\pi$ is as in
(\ref{piex})
then $\sigma = \left(\begin{array}{ll}0&2\\1&0\end{array}\right)$
is the only compatible shift matrix.

\begin{Lemma} \label{L:cent}
Let $\sigma = (s_{i,j})_{1 \leq i,j \leq 2}$
be a shift matrix compatible with $\pi$.
For $r \geq 0$,
let
$$
x_{i,j}^{(r)} \coloneqq \sum_{\substack{1 \le p,q \le m+n \\ \row(p) = i, \row(q) = j \\
\deg(e_{p,q}) = r-1}} \bar e_{p,q} \in \g(r-1).
$$
Then the elements
$\{x_{1,1}^{(r)}\:|\:0 < r \leq k\}\cup\{x_{2,2}^{(r)}\:|\:0 < r \leq l\}
\cup \{x_{1,2}^{(r)}\:|\:s_{1,2} < r \leq s_{1,2}+k\}
\cup \{x_{2,1}^{(r)}\:|\:s_{2,1} < r \leq s_{2,1}+k\}$
give a homogeneous basis for $\g^e$.
\end{Lemma}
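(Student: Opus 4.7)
The plan is to reduce to the classical matrix centralizer. Since $e$ is an even element, the supercommutator $[x,e]$ agrees with $xe-ex$ for all $x$, so $\g^e$ as a super vector space is nothing but the centralizer of $e$ in $\mathrm{Mat}_{m+n}(\C)$, endowed with the $\Z/2$-grading inherited from $\g$. The sign twist $\bar e_{p,q} = (-1)^{|\row(p)|} e_{p,q}$ only rescales matrix units, hence has no effect on the span. I may therefore forget the super structure throughout and work with ordinary commutators.

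First I would verify directly that each listed $x_{i,j}^{(r)}$ commutes with $e = \sum_{(a,b)} e_{a,b}$, where the sum runs over horizontally adjacent pairs in $\pi$. Using $[e_{p,q}, e_{a,b}] = \delta_{q,a} e_{p,b} - \delta_{p,b} e_{a,q}$, the bracket $[x_{i,j}^{(r)}, e]$ rearranges into a sum of matrix units $e_{p',q'}$ indexed by pairs of boxes satisfying $\row(p')=i$, $\row(q')=j$, $\col(q')-\col(p')=r$, with coefficient equal to the difference of two indicators: whether $q'$ has a left neighbor in row $j$, and whether $p'$ has a right neighbor in row $i$. This coefficient vanishes for every such pair precisely when the range of $r$ avoids the boundary columns of the pyramid. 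The compatibility of $\sigma$ with $\pi$ enters here: row $1$ occupies columns $s_{2,1}+1,\ldots,s_{2,1}+k$ and row $2$ occupies columns $1,\ldots,l$, and one checks that the stated ranges ($0<r\le k$ for $(i,j)=(1,1)$, $0<r\le l$ for $(2,2)$, $s_{1,2}<r\le s_{1,2}+k$ for $(1,2)$, $s_{2,1}<r\le s_{2,1}+k$ for $(2,1)$) are exactly those for which no boundary contribution survives.

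Linear independence is then immediate, since for distinct triples $(i,j,r)$ the elements $x_{i,j}^{(r)}$ are supported on disjoint sets of matrix units. To conclude I would count: the list contains $k+l+k+k = 3k+l$ elements. Since $e$ acts on $\C^{m+n}$ by sending each basis vector $v_i$ to its left horizontal neighbor in $\pi$, its Jordan type is $(l,k)$; the standard centralizer formula then gives $\dim \g^e = l+3k$, so the linearly independent list is a basis. By definition of the good grading $x_{i,j}^{(r)} \in \g(r-1)$, so the basis is homogeneous.

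The main obstacle is the boundary bookkeeping in the verification step, but it amounts to straightforward combinatorics on the pyramid, entirely analogous to the purely even argument carried out in \cite{BK}.
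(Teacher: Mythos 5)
Your argument is correct, and it follows the paper's first step exactly: since $e$ is even, the supercommutator with $e$ is the ordinary commutator, so $\g^e$ is just the classical centralizer of $e$ in $\mathrm{Mat}_{m+n}(\C)$ with its inherited grading, and the sign twist in $\bar e_{p,q}$ is harmless. Where you diverge is that the paper then simply quotes \cite[Lemma 7.3]{BK} (ultimately \cite[IV.1.6]{SS}) for the explicit basis, whereas you reprove that statement from scratch: the commutator computation giving the coefficient $[\,q'\text{ has a left neighbour in row }j\,]-[\,p'\text{ has a right neighbour in row }i\,]$ is right, and with row $1$ occupying columns $s_{2,1}+1,\dots,s_{2,1}+k$ and row $2$ columns $1,\dots,l$ one checks that these indicators cancel exactly in the stated ranges; linear independence via disjoint supports and the count $3k+l=\dim\g^e$ for Jordan type $(l,k)$ then finish the job. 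So your proof is a self-contained substitute for the citation, at the cost of the boundary bookkeeping; the paper's version buys brevity. One small imprecision worth noting: for $(i,j)=(1,1)$ and $(2,2)$ the commutator coefficient vanishes for \emph{all} $r$, and the ranges $0<r\le k$, $0<r\le l$ only govern nonvanishing of $x_{i,j}^{(r)}$ itself (the word ``precisely'' is accurate only in the off-diagonal cases); this does not affect the validity of the argument, since all you need is that the listed elements centralize, are nonzero with disjoint supports, and number $\dim\g^e$.
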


\begin{proof}
As $e$ is even,
the centralizer of $e$ in $\g$ is just the same as a vector space as the
centralizer of $e$ viewed as an element of $\gl_{m+n}(\C)$,
so this follows as a special case of \cite[Lemma 7.3]{BK} (which
is \cite[IV.1.6]{SS}).
\end{proof}

We come to the key ingredient in
our approach: the explicit definition of special
elements of $U(\p)$ some of which turn out to generate $W_\pi$.
Define another ordering $\prec$ on the set $\{1,\dots,m+n\}$ by
declaring that
$i \prec j$ if $\col(i) < \col(j)$, or if $\col(i) = \col(j)$ and
$\row(i) < \row(j)$.
Let $\tilde\rho \in \mathfrak{t}^*$ be the weight with
\begin{equation}\label{rhodef}
(\tilde\rho|\eps_j) = \#\left\{\,i\:\big|\:i \preceq j\text{ and } |\row(i)| = \1\right\}
- \#\left\{\,i\:\big|\:i \prec j\text{ and } |\row(i)| = \0\right\}.
\end{equation}
For example if $\pi$ is as in (\ref{piex})
then $\tilde\rho = -\eps_4 - 2 \eps_5$.
The weight $\tilde\rho$ extends to a character of $\mathfrak{p}$,
so there are automorphisms
\begin{equation}
S_{\pm\tilde\rho}:U(\mathfrak{p}) \rightarrow U(\mathfrak{p}),
\qquad
e_{i,j} \mapsto e_{i,j} \pm \delta_{i,j} \tilde\rho(e_{i,i}).
\end{equation}
Finally, given $1 \leq i,j \leq 2, 0 \leq \varsigma \leq 2$ and
$r \geq 1$,
we define
\begin{equation} \label{EQ:Tpidef}
t_{i,j; \varsigma}^{(r)} \coloneqq
S_{\tilde\rho}\left(\sum_{s=1}^r (-1)^{r-s}
  \sum_{\substack{i_1,\dots,i_s \\ j_1,\dots,j_s}}
(-1)^{\#\left\{a=1,\dots,s-1\:|\:
\row(j_a) \leq\varsigma
\right\}}
\bar e_{i_1,j_1} \cdots \bar e_{i_s,j_s}\right),
\end{equation}
where the sum is over all $1 \le i_1, \dots , i_s, j_1, \dots , j_s
\le m+n$ such that
\begin{itemize}
\item $\row(i_1) = i$ and $\row(j_s) = j$;
\item $\col(i_a)\le\col(j_a)$ ($a=1,\dots,s$);
\item $\row(i_{a+1})=\row(j_{a})$ ($a=1,\dots,s-1$);
\item if $\row(j_a) > \varsigma$ then $\col(i_{a+1}) >\col(j_a)$
  ($a=1,\dots,s-1$);
\item if $\row(j_a) \leq \varsigma$ then $\col(i_{a+1}) \leq\col(j_a)$
  ($a=1,\dots,s-1$);
\item $\deg(e_{i_1,j_1}) + \dots + \deg(e_{i_s,j_s}) = r-s$.
\end{itemize}
It is convenient to collect these elements together into the
generating function
\begin{equation}
t_{i,j;\varsigma}(u) \coloneqq \sum_{r \geq 0}
t_{i,j;\varsigma}^{(r)} u^{-r} \in U(\p)\llbracket
u^{-1} \rrbracket,
\end{equation}
setting $t_{i,j;\varsigma}^{(0)} \coloneqq \delta_{i,j}$.
The following two
propositions should already convince the reader of the remarkable nature of
these elements.

\begin{Proposition}\label{factorixatin}
The following identities hold in $U(\p)\llbracket u^{-1} \rrbracket$:
\begin{align}
t_{1,1;1}(u) &= t_{1,1;0}(u)^{-1},\label{inv1}\\
t_{2,2;2}(u) &= t_{2,2;1}(u)^{-1},\label{inv2}\\
 t_{1,2;0}(u) &= t_{1,1;0}(u) t_{1,2;1}(u),\label{inv3}\\
t_{2,1;0}(u) &= t_{2,1;1}(u) t_{1,1;0}(u),\label{inv4}\\
t_{2,2;0}(u) &= t_{2,2;1}(u) + t_{2,1;1}(u) t_{1,1;0}(u)t_{1,2;1}(u).\label{inv5}
\end{align}
\end{Proposition}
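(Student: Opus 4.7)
The five identities in Proposition~\ref{factorixatin} are the exact mirror image on the $W$-algebra side of the Gauss factorization identities (\ref{form1})--(\ref{form4}) in the Yangian $Y$, under the correspondence $t_{1,1;0}(u) \leftrightarrow d_1(u)$, $t_{2,2;1}(u) \leftrightarrow d_2(u)$, $t_{1,2;1}(u) \leftrightarrow e(u)$, $t_{2,1;1}(u) \leftrightarrow f(u)$. This strongly suggests that the parameter $\varsigma$ plays on the $W$-algebra side the role that the choice of Gauss factor plays on the Yangian side, and the strategy of proof is to verify the five identities by direct combinatorial manipulation of the defining path-sums in~(\ref{EQ:Tpidef}), following the template of \cite[\S 9]{BK} for the purely even case.

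My plan is as follows. First, since $S_{\tilde\rho}$ is an algebra automorphism of $U(\mathfrak p)$, I would apply $S_{-\tilde\rho}$ to reduce each identity to an equivalent statement about the corresponding untwisted generating functions. Then for each identity I would extract the $u^{-r}$ coefficients on both sides and exhibit an explicit, monomial- and sign-preserving bijection between the indexing data. For~(\ref{inv3}), given a sequence $(\bar e_{i_1,j_1},\dots,\bar e_{i_s,j_s})$ appearing in $t_{1,2;0}^{(r)}$, one locates the smallest $a$ with $\row(j_a)=2$ (which exists since $\row(j_s)=2$); splitting the sequence at that index yields a pair of sequences which respectively index a term of $t_{1,1;0}^{(r_1)}$ and a term of $t_{1,2;1}^{(r_2)}$ with $r_1+r_2=r$, and one checks that this decomposition is a bijection compatible with the column-adjacency conditions imposed by $\varsigma=0$ and $\varsigma=1$. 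Identity~(\ref{inv4}) is proved symmetrically by splitting at the largest such $a$, and (\ref{inv5}) is similar but with the extra summand $t_{2,2;1}(u)$ on the right corresponding to the subclass of sequences whose interior never returns to row~$1$. For the inverse identities~(\ref{inv1})--(\ref{inv2}), the product $t_{i,i;\varsigma-1}(u)\cdot t_{i,i;\varsigma}(u)$ would be expanded as a double sum over pairs of sequences; a sign-reversing involution on the indexing set, obtained by shifting the ``split point'' by one step in the pair, would then leave only the empty pair as a fixed point, producing the value~$1$.

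The main obstacle is the accounting of signs. The alternating coefficient $(-1)^{r-s}$, the parity exponent $\#\{a\,|\,\row(j_a)\le\varsigma\}$, and the implicit signs built into $\bar e_{i,j}=(-1)^{|\row(i)|}e_{i,j}$ must all interact so that the bijections above are exactly sign-preserving, and it is only through this delicate conspiracy that, say, a sequence contributing $(-1)^{r-s}$ to $t_{1,2;0}^{(r)}$ produces a concatenation with matching total sign $(-1)^{r_1-s_1}(-1)^{r_2-s_2}(-1)^{\#\{a\,|\,\row(j_a)\le 1\}}$ in $t_{1,1;0}^{(r_1)}t_{1,2;1}^{(r_2)}$. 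The role of $(-1)^{r-s}$ is to enable telescoping between sequences of different lengths $s$, while the role of the $\varsigma$-dependent sign is to compensate for the parity shifts incurred each time a row-$1$ element is crossed; verifying these compatibilities carefully, especially in identity~(\ref{inv5}) where one must simultaneously treat sequences that pass through row~$1$ and those that do not, is the principal technical calculation required.
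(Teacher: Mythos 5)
Your plan to reprove the identities from scratch, by reducing via $S_{-\tilde\rho}$ and manipulating the path-sums in (\ref{EQ:Tpidef}) directly, is in principle a legitimate alternative to what the paper does (the paper simply invokes \cite[Lemma 9.2]{BK}, observing that the argument there is purely formal and independent of the ambient associative algebra, so it applies verbatim in $U(\p)$). However, the mechanism you propose for (\ref{inv3})--(\ref{inv5}) fails: there is no ``monomial- and sign-preserving bijection'' between the terms of the two sides, because these identities only hold after genuine cancellation on the right-hand side. Concretely, take $\pi$ to be a single column of height two with parity sequence $(\0,\1)$, so the box in row $i$ is labelled $i$ and every $e_{i,j}$ has degree $0$. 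The $\varsigma=0$ condition $\col(i_{a+1})>\col(j_a)$ then kills every path with $s\geq 2$, so $t_{1,2;0}(u)=S_{\tilde\rho}(\bar e_{1,2})u^{-1}$ and $t_{1,1;0}(u)=1+S_{\tilde\rho}(\bar e_{1,1})u^{-1}$, whereas $t_{1,2;1}^{(r)}=(-1)^{r-1}S_{\tilde\rho}(\bar e_{1,1})^{r-1}S_{\tilde\rho}(\bar e_{1,2})$ for all $r\geq 1$. The $u^{-2}$-coefficient of $t_{1,1;0}(u)\,t_{1,2;1}(u)$ is the sum of the two non-zero monomials $t_{1,2;1}^{(2)}$ and $t_{1,1;0}^{(1)}t_{1,2;1}^{(1)}$, which cancel, while the corresponding coefficient of $t_{1,2;0}(u)$ has no terms at all. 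So no bijection of indexing data can exist; the signs $(-1)^{r-s}$ and $(-1)^{\#\{a\,|\,\row(j_a)\leq\varsigma\}}$ are there to drive cancellation, not to match signs of corresponding monomials.

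There is also a well-definedness problem with the proposed splitting for (\ref{inv3}): cutting a $t_{1,2;0}$-path at its first arrival in row $2$ leaves an initial segment ending in row $2$, which is not a path of the kind occurring in $t_{1,1;0}$; and even if you cut just before that point, the final segment inherits strictly increasing columns, violating the $\varsigma=1$ requirement $\col(i_{a+1})\leq\col(j_a)$ at any later row-$1$ landing, while products whose $t_{1,2;1}$-factor contains a weakly decreasing step are never in the image. To salvage your approach you would need a sign-reversing involution (or an equivalent telescoping/induction on the number of columns) for all five identities, not just for (\ref{inv1})--(\ref{inv2}); alternatively, do as the paper does and quote \cite[Lemma 9.2]{BK}, whose formal argument carries over unchanged to the super setting.
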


\begin{proof}
This is proved in \cite[Lemma 9.2]{BK}; the argument there is entirely
formal and does not depend on the underlying associative algebra in
which the
calculations are performed.
\end{proof}

\begin{Proposition}\label{invariant}
Let $\sigma$ be a shift matrix compatible with $\pi$.
The following elements of $U(\p)$ belong to $W_\pi$: all
$t_{1,1;0}^{(r)}, t_{1,1;1}^{(r)},t_{2,2;1}^{(r)}$ and
$t_{2,2;2}^{(r)}$ for $r > 0$;
all $t_{1,2;1}^{(r)}$ for $r > s_{1,2}$; all
$t_{2,1;1}^{(r)}$ for $r > s_{2,1}.$
\end{Proposition}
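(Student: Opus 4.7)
The plan is to verify the defining condition (\ref{wpidef}) directly for each listed element. For any $u \in U(\mathfrak{p})$ and $y \in \mathfrak{m}$ one has $u(y-\chi(y)) = (y-\chi(y))u + [u,y]$, and the first summand lies in $\mathfrak{m}_\chi U(\mathfrak{g})$ automatically; thus $u \in W_\pi$ is equivalent to $[y,u] \in \mathfrak{m}_\chi U(\mathfrak{g})$ for every $y \in \mathfrak{m}$. Since $\mathfrak{m}$ is spanned by the matrix units $\bar e_{p,q}$ with $\col(p) > \col(q)$, it suffices to verify, for each generator $t^{(r)}_{i,j;\varsigma}$ listed in the statement, that every supercommutator $[\bar e_{p,q}, t^{(r)}_{i,j;\varsigma}]$ lies in $\mathfrak{m}_\chi U(\mathfrak{g})$.

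First I would strip off the twist by $S_{\tilde\rho}$: since $S_{\tilde\rho}$ is an algebra automorphism of $U(\mathfrak{p})$, membership in $W_\pi$ reduces to an analogous invariance property for the untwisted element, at the cost of shifting $\chi$ by the character $\tilde\rho|_{\mathfrak{m}}$. The weight $\tilde\rho$ defined in (\ref{rhodef}) is calibrated precisely so that the ``quantum corrections'' produced when reordering monomials via the supercommutator are absorbed by this shift. With the twist removed, the problem becomes a combinatorial supercommutator calculation inside $U(\mathfrak{p})$.

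The core step is then to expand $[\bar e_{p,q}, t^{(r)}_{i,j;\varsigma}]$ using the explicit formula (\ref{EQ:Tpidef}) and the supercommutator relation
\[
 [\bar e_{p,q}, \bar e_{a,b}] = (-1)^{|\row(p)|}\bigl(\delta_{q,a}\,\bar e_{p,b} - (-1)^{(|\row(p)|+|\row(q)|)(|\row(a)|+|\row(b)|)}\delta_{b,p}\,\bar e_{a,q}\bigr).
\]
Applying this to each factor $\bar e_{i_a,j_a}$ in the monomials appearing in $t^{(r)}_{i,j;\varsigma}$ produces ``contraction'' terms (which shorten the monomial) and ``extension'' terms (which alter its shape). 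The elaborate row/column conditions in the definition, together with the sign factor $(-1)^{\#\{a\,:\,\row(j_a)\leq\varsigma\}}$, are designed so that the terms coming from successive values of $s$ cancel pairwise, and whatever survives either vanishes or contains a factor $\bar e_{p',q'}$ with $\col(p')>\col(q')$, which produces a member of $\mathfrak{m}_\chi$ after accounting for the $\tilde\rho$-shift. This is exactly the strategy of \cite[Theorem~9.3]{BK}, and the formulas in Proposition~\ref{factorixatin} show that only the four series $t_{1,1;0}, t_{1,1;1}, t_{2,2;1}, t_{2,2;2}$ together with $t_{1,2;1}$ and $t_{2,1;1}$ need to be treated independently; the remaining Gauss-type series are products of these.

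The main obstacle will be tracking the super-signs. Unlike the purely even case of \cite{BK}, every reordering introduces a sign depending on the parities of the rows involved, and one must check that the twist $\bar e_{i,j} = (-1)^{|\row(i)|}e_{i,j}$, the $\varsigma$-dependent sign in (\ref{EQ:Tpidef}), the shift by $\tilde\rho$, and the supercommutator signs all conspire to reproduce the same cancellations as in loc.\ cit. Concretely, I would run the argument of \cite[$\S 9$]{BK} step by step, inserting parity signs at each reordering and each bracket, and verify that the bookkeeping is compatible; the choice of row-twisted $\bar e_{i,j}$ and of $\tilde\rho$ as in (\ref{rhodef}) is tailored precisely to make this work, so with patience no new difficulty should arise beyond that of \cite{BK}.
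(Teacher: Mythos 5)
Your reduction to checking $[y,u]\in\m_\chi U(\g)$ for $y$ in a generating set of $\m$ is the right starting point, but both main steps after that contain genuine gaps. First, the twist by $S_{\tilde\rho}$ cannot be stripped off: $\tilde\rho$ is a character of $\p$ only, not of $\g$, so $S_{\tilde\rho}$ does not extend to an automorphism of $U(\g)$ preserving $\m_\chi U(\g)$; and since $\tilde\rho$ vanishes on the off-diagonal space $\m$, there is no compensating ``shift of $\chi$'' to absorb anything. The only freedom is adding a multiple of the supertrace character (which \emph{is} a character of all of $\g$), as exploited at the start of Section~\ref{S:proof} and in Lemma~\ref{invariant1}; beyond that the actual value of $\tilde\rho$ enters the verification essentially --- for instance the cancellation closing the proof of Lemma~\ref{l2} rests on the identity $(\tilde\rho|\eps_b)-(\tilde\rho|\eps_{b-1})+(-1)^{|2|}=0$ forced by (\ref{rhodef}). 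Second, the heart of your plan --- that after expanding $[\bar e_{p,q},t^{(r)}_{i,j;\varsigma}]$ via (\ref{EQ:Tpidef}) the terms ``cancel pairwise'' and every survivor carries a factor from $\m_\chi$ --- is exactly the content of the proposition and is asserted rather than proved; it also misreports the strategy of \cite{BK}, which is not a direct term-by-term cancellation valid for an arbitrary pyramid.

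What the paper actually does (following the corresponding argument of \cite[$\S$12]{BK} in the purely even case) is: (a) in the rectangular case $k=l$ one may take $\tilde\rho=0$ and realize $t_{i,j;0}(u)$ as $t_{i,j}(\cdet A(u))$ for an explicit matrix $A(u)$ over a tensor algebra, so that invariance follows from a column-determinant computation with row and column operations (Lemma~\ref{invariant1}); (b) for $s_{1,2}+s_{2,1}>0$ one inducts on $l-k$ by removing an outer column of $\pi$, expressing each $t^{(r)}_{i,j;\varsigma}$ in terms of the invariants of the smaller algebra $W_{\dot\pi}$ (Lemma~\ref{ind1}) and then checking commutators only against the finitely many new generators of $\m$. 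The hardest of these, $[e_{b,b-1},t^{(r)}_{1,2;1}]$ and $[e_{b,b-1},t^{(r)}_{2,2;1}]$ when $s_{1,2}=1$, is handled by the recursion of Lemma~\ref{bird}, which is derived from the Yangian relations in $W_{\dot\pi}$, i.e.\ from Theorem~\ref{T:main} for the smaller pyramid, available by induction. This intertwining of the invariance statement with the main isomorphism theorem in a joint induction is an essential idea your direct-expansion plan does not supply, and without it (or a genuine substitute) the argument does not go through for non-rectangular $\pi$.
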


\begin{proof}
Postponed to the next section.
\end{proof}

Now we can deduce our main result.
For any shift matrix $\sigma$ compatible with $\pi$, we identify
$U(\h)$
with the algebra $U_\sigma^l$ from (\ref{usl}) so that
$$
e_{i,j} \equiv
\left\{
\begin{array}{ll}
1^{\otimes (c-1)} \otimes e_{\row(i),\row(j)} \otimes 1^{\otimes (l-c)}
&\text{if $q_c = 2$,}\\
1^{\otimes (c-1)} \otimes e_{1,1} \otimes 1^{\otimes (l-c)}
&\text{if $q_c = 1$,}
\end{array}
\right.
$$
for any $1 \leq i,j \leq m+n$ with $c \coloneqq \col(i) =\col(j)$, where $q_c$ denotes the
number of boxes in this column of $\pi$.
Define the {\em Miura transform}
\begin{equation}\label{miura}
\mu : W_\pi \to U(\h) = U_\sigma^l
\end{equation}
to be the restriction to $W_\pi$
of the
shift automorphism $S_{-\tilde \rho}$ composed with
the natural homomorphism $\operatorname{pr}:U(\p) \rightarrow U(\h)$ induced by the
projection $\p \twoheadrightarrow \h$.

\begin{Theorem}\label{T:main}
Let $\sigma$ be a shift matrix compatible with $\pi$.
The Miura transform
is injective and its image is the algebra
$Y_\sigma^l \subseteq U_\sigma^l$ from (\ref{shy}).
Hence it defines a superalgebra isomorphism
\begin{equation}\label{mud}
\mu:W_\pi \stackrel{\sim}{\rightarrow} Y_\sigma^l
\end{equation}
between $W_\pi$ and the shifted Yangian of level $l$.
Moreover $\mu$
maps the invariants from Proposition~\ref{invariant} to the Drinfeld generators
of $Y^l_\sigma$ as follows:
\begin{align}\label{maini}
\mu(t_{1,1;0}^{(r)}) &= d_1^{(r)} \qquad(r > 0),&
\mu(t_{1,1;1}^{(r)}) &= \tilde d_1^{(r)}\qquad(r > 0),\\
\mu(t_{2,2;1}^{(r)}) &= d_2^{(r)}\qquad(r > 0),
&\mu(t_{2,2;2}^{(r)}) &= \tilde
d_2^{(r)}
\qquad(r > 0),\\
\mu(t_{1,2;1}^{(r)}) &= e^{(r)}\qquad (r > s_{1,2}),
&\mu(t_{2,1;1}^{(r)}) &= f^{(r)} \qquad (r > s_{2,1}).\label{mainj}
\end{align}
\end{Theorem}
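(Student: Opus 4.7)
The plan is to match the Miura transforms of the distinguished invariants $t_{i,j;\varsigma}^{(r)}$ directly with the Drinfeld generators of $Y_\sigma^l$ using the Gauss-type factorization of Proposition~\ref{factorixatin}, then deduce the isomorphism from a Kazhdan-graded PBW comparison.

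The first step is to verify the formulae for $\mu$ on the ``outer'' generators $t_{1,1;0}^{(r)}$, $t_{1,2;1}^{(r)}$, and $t_{2,1;1}^{(r)}$. From the explicit definition (\ref{EQ:Tpidef}), after applying $S_{-\tilde\rho}$ (which only affects diagonal terms) and projecting onto $U(\h)$, only those monomials $\bar e_{i_1,j_1}\cdots \bar e_{i_s,j_s}$ survive for which $\col(i_a) = \col(j_a)$ for every $a$. I would compare this with the explicit formula (\ref{new}) for $\ev^l$ applied to the corresponding $t_{i,j}^{(r)}\in Y$ and with the definition of $\ev_\sigma^l$ via the iterated $\Delta_+$'s on the $s_{1,2}$ columns of height one on the right and $\Delta_-$'s on the $s_{2,1}$ columns of height one on the left. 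This identifies $\mu(t_{1,1;0}^{(r)}) = d_1^{(r)}$, $\mu(t_{1,2;1}^{(r)}) = e^{(r)}$ for $r > s_{1,2}$, and $\mu(t_{2,1;1}^{(r)}) = f^{(r)}$ for $r > s_{2,1}$; here the shifts $s_{i,j}$ arise naturally because the leftmost (resp.\ rightmost) columns of height one do not contribute to $t_{2,1}$-type (resp.\ $t_{1,2}$-type) terms in the Miura image for small $r$.

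The remaining identities in (\ref{maini})--(\ref{mainj}) follow formally. Indeed, the identities (\ref{inv1})--(\ref{inv5}) of Proposition~\ref{factorixatin} in $U(\p)\llbracket u^{-1}\rrbracket$ exactly mirror the Gauss factorization (\ref{form1})--(\ref{form4}) defining $d_i(u), \tilde d_i(u), e(u), f(u)$ in $Y$. Applying the algebra homomorphism $\mu$ to (\ref{inv1})--(\ref{inv5}) and invoking the first step yields $\mu(t_{1,1;1}^{(r)}) = \tilde d_1^{(r)}$, $\mu(t_{2,2;1}^{(r)}) = d_2^{(r)}$, and $\mu(t_{2,2;2}^{(r)}) = \tilde d_2^{(r)}$. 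In particular, $\mu$ maps $W_\pi$ into $Y_\sigma^l$, since by Proposition~\ref{invariant} the listed $t_{i,j;\varsigma}^{(r)}$ do lie in $W_\pi$, and their Miura images generate $Y_\sigma^l$ (by Corollary~\ref{pbwt}).

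The isomorphism statement is then a PBW comparison. By Theorem~\ref{T:WPBW} together with Lemma~\ref{L:cent}, the Kazhdan-graded superalgebra $\gr W_\pi \cong S(\g^e)$ is a supersymmetric algebra on generators $x_{i,j}^{(r)}$ indexed by exactly the set $\Omega$ of (\ref{supermonomial}); by Corollary~\ref{pbwt} the same set $\Omega$ indexes a PBW basis of $Y_\sigma^l$. A direct inspection of the leading Kazhdan term of $\mu$ on the invariants identified above shows that the top symbols of $\mu(t_{1,1;0}^{(r)}), \mu(t_{2,2;1}^{(r)}), \mu(t_{1,2;1}^{(r)}), \mu(t_{2,1;1}^{(r)})$ correspond (up to sign) to $x_{1,1}^{(r)}, x_{2,2}^{(r)}, x_{1,2}^{(r)}, x_{2,1}^{(r)}$, and these are linearly independent in $\gr U_\sigma^l$ by the argument of Lemma~\ref{ind}. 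Hence $\gr \mu$ is injective and its image contains a generating set for $\gr Y_\sigma^l$, so $\gr\mu: \gr W_\pi \to \gr Y_\sigma^l$ is an isomorphism, and consequently so is $\mu$ itself.

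The main obstacle is the combinatorial verification in the first step: unwinding the iterated $\Delta_\pm$ and $\ev$ operations on $t_{i,j}^{(r)}\in Y$ and recognizing the resulting sum as exactly the projection of the signed sum (\ref{EQ:Tpidef}) to $U(\h)$ after the shift $S_{-\tilde\rho}$. The sign conventions in (\ref{EQ:Tpidef}), the parity-dependent effect of $S_{\pm\tilde\rho}$ from (\ref{rhodef}), and the need to handle both parity sequences uniformly all require careful bookkeeping. This parallels the combinatorial heart of \cite[\S 9]{BK} but with extra care for the super signs, and it is where any subtle point of the proof is likely to reside.
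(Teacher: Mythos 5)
Your overall architecture (match the distinguished invariants with Drinfeld generators, then conclude by a Kazhdan-filtered PBW comparison using Theorem~\ref{T:WPBW}, Lemma~\ref{L:cent} and Corollary~\ref{pbwt}) is the same as the paper's, and your final step is fine. The genuine gap is in your second step, where you claim that $\mu(t_{2,2;1}^{(r)}) = d_2^{(r)}$ (and hence $\mu(t_{2,2;2}^{(r)}) = \tilde d_2^{(r)}$) ``follows formally'' from Proposition~\ref{factorixatin} once the outer identities for $t_{1,1;0}^{(r)}, t_{1,2;1}^{(r)}, t_{2,1;1}^{(r)}$ are known. The only identities of Proposition~\ref{factorixatin} involving $t_{2,2;1}(u)$ are (\ref{inv2}) and (\ref{inv5}), and (\ref{inv5}) brings in $t_{2,2;0}(u)$, whose Miura image you have not computed; moreover, applying $\mu$ to the full series in (\ref{inv5}) also requires the images of the low-order coefficients $t_{1,2;1}^{(r)}$ for $r \le s_{1,2}$ and $t_{2,1;1}^{(r)}$ for $r \le s_{2,1}$, which are outside your first step. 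In the shifted case there is nothing in $Y_\sigma^l$ to match $\mu(t_{2,2;0}(u))$ against, since the RTT series $t_{2,2}(u)$ does not live in $Y_\sigma$, so the Gauss-factorization bookkeeping cannot close up by itself: the identity for $d_2^{(r)}$ needs its own direct verification, exactly like those for $e^{(r)}$ and $f^{(r)}$. (The formal derivation you have in mind does work when $k=l$, because there one can verify $\mu(t_{i,j;0}^{(r)}) = t_{i,j}^{(r)}$ for \emph{all four} series via (\ref{new}) and then use (\ref{form1})--(\ref{form2}); this is precisely the paper's base case.)

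Relatedly, your first step is only sketched, and in the shifted case it is harder than you suggest: the closed formula (\ref{new}) is only available for the unshifted generators $t_{i,j}^{(r)}$, whereas $\ev_\sigma^l(e^{(r)})$, $\ev_\sigma^l(f^{(r)})$ and $\ev_\sigma^l(d_2^{(r)})$ must be tracked through the iterated maps $\Delta_\pm$ of (\ref{dp})--(\ref{dm}), which act on Drinfeld rather than RTT generators. The paper's device for taming exactly this bookkeeping is an induction on $s_{1,2}+s_{2,1} = l-k$, removing one outer column of height one at a time: the recursion satisfied by the elements (\ref{EQ:Tpidef}) under column removal (compare Lemma~\ref{ind1}) matches term by term the formulae (\ref{dp})--(\ref{dm}) defining one application of $\Delta_+$ or $\Delta_-$, and this single-column comparison is carried out uniformly for the $d_2$-, $e$- and $f$-families. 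If you repair your argument by (i) including the $t_{2,2;1}^{(r)}$ family in the list of invariants whose Miura image is verified directly, and (ii) organizing the verification by this column-removal induction rather than attempting the full combinatorial unwinding in one step, you recover the paper's proof.
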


\begin{proof}
We first establish the identities (\ref{maini})--(\ref{mainj}).
Note that the identities involving $\tilde d_i^{(r)}$
are consequences of the ones involving $d_i^{(r)}$
thanks to (\ref{inv1})--(\ref{inv2}),
recalling also that $\tilde d_i(u) = d_i(u)^{-1}$.
To prove all the other identities,
we proceed by induction on $s_{2,1}+s_{1,2} = l-k$.

First consider the base case $l=k$.
For $1 \leq i,j \leq 2$ and $r > 0$
we know in this situation
that $t_{i,j;0}^{(r)} \in W_\pi$ since,
using (\ref{inv3})--(\ref{inv5}), it can be expanded in terms of
elements all of which are known to
lie in $W_\pi$ by Proposition~\ref{invariant};
see also Lemma~\ref{invariant1} below.
Moreover, we have directly from (\ref{EQ:Tpidef}) and (\ref{new})
that $\mu(t_{i,j;0}^{(r)}) = t_{i,j}^{(r)} \in Y_\sigma^l$.
Hence $\mu(t_{i,j;0}(u)) = t_{i,j}(u)$.
The result follows from this,
(\ref{form1})--(\ref{form2}), and the analogous expressions for
$t_{1,1;0}(u), t_{2,2;1}(u),
t_{1,2;1}(u)$ and $t_{2,1;1}(u)$ derived from
(\ref{inv3})--(\ref{inv5}).

Now consider the induction step, so $s_{2,1}+s_{1,2} > 0$.
There are two cases according to whether $s_{2,1} > 0$ or $s_{1,2} >
0$.
We just explain the argument for the latter situation, since the
former is entirely similar.
Let $\dot\pi$ be the pyramid obtained from $\pi$ by removing the
rightmost column and let $W_{\dot\pi}$ be the corresponding finite
$W$-algebra.
We denote its Miura transform by $\dot\mu:W_{\dot\pi} \rightarrow
U_{\sigma_+}^{l-1}$, and similarly decorate all other notation
related to $\dot\pi$ with a dot to avoid confusion.
Now we proceed to
show that
$\mu(t_{1,2;1}^{(r)}) = e^{(r)}$ for each $r > s_{1,2}$.
By induction, we know that
$\dot\mu(\dot t_{1,2;1}^{(r)}) = \dot e^{(r)}$
for each $r \geq s_{1,2}$.
But then it follows from the explicit form of (\ref{EQ:Tpidef}),
together with
(\ref{dp}) and the definition of the evaluation homomorphism (\ref{Evalhom}),
that
\begin{align*}
\mu(t_{1,2;1}^{(r)}) &= \dot\mu\big(\dot t_{1,2;1}^{(r)}\big) \otimes 1
+ (-1)^{|2|}\dot\mu\big(\dot t_{1,2;1}^{(r-1)}\big) \otimes e_{1,1}\\
&= \dot
e^{(r)}
\otimes 1 + (-1)^{|2|} \dot e^{(r-1)} \otimes e_{1,1}
= e^{(r)},
\end{align*}
providing $r > s_{1,2}$.
The other cases are similar.

Now we deduce the rest of the theorem from
(\ref{maini})--(\ref{mainj}).
Order the elements of the set
\begin{multline*}
\Omega \coloneqq \{t_{1,1;0}^{(r)}\:|\:0 < r \leq k\}
\cup \{t_{2,2;1}^{(r)}\:|\:0 < r \leq l\}\\
\cup
\{t_{1,2;1}^{(r)}\:|\:s_{1,2}<r \leq s_{1,2}+k\}
\cup
\{t_{2,1;1}^{(r)}\:|\:s_{2,1}<r \leq s_{2,1}+k\}
\end{multline*}
in some way.
By Proposition~\ref{invariant},
each $t_{i,j;\varsigma}^{(r)} \in \Omega$
belongs to $W_\pi$.
Moreover,
from the definition (\ref{EQ:Tpidef}),
it is in filtered degree $r$ and
$\gr_r t_{i,j;\varsigma}^{(r)}$ is equal up to a sign to
the element $x_{i,j}^{(r)}$ from Lemma~\ref{L:cent}
plus a linear combination of monomials in
elements of strictly smaller Kazhdan degree.
Using Theorem~\ref{T:WPBW}, we deduce that the set of all ordered
supermonomials in the set $\Omega$ gives a linear basis for $W_\pi$.
By (\ref{maini})--(\ref{mainj}) and Corollary~\ref{pbwt},
$\mu$ maps this basis onto a basis for $Y_\sigma^l \subseteq
U_\sigma^l$.
Hence $\mu$ is an isomorphism.
\end{proof}

\begin{Remark}
The grading $\p = \bigoplus_{r \geq 0} \g(r)$
induces a grading on the superalgebra $U(\p)$.
However $W_\pi$ is not a graded subalgebra.
Instead, we get induced another filtration on $W_\pi$,
with respect to which the associated graded superalgebra $\gr' W_\pi$
is identified with
a graded subalgebra of $U(\p)$.
Each of the invariants
$t_{i,j;\varsigma}^{(r)}$ from Proposition~\ref{invariant}
belongs to filtered degree $(r-1)$ and
has image
$(-1)^{r-1} x_{i,j}^{(r)}$ in the associated graded algebra.
Combined with
Lemma~\ref{L:cent} and the usual PBW theorem for $\g^e$, it follows
that $\gr' W_\pi = U(\g^e)$.
Moreover
this filtration on $W_\pi$ corresponds
under the isomorphism $\mu$ to the filtration on $Y_\sigma^l$
induced by the Lie filtration on $Y_\sigma$.
\end{Remark}

\begin{Remark}
In this section, we have worked with
the ``right-handed'' definition (\ref{wpidef}) of the finite
$W$-algebra.
One can also consider the ``left-handed'' version
$$
W^{\dagger}_\pi \coloneqq
\{u \in U(\p) \:|\:  \m_\chi u \subseteq U(\g) \m_\chi \}.
$$
There is an analogue of Theorem \ref{T:main} for $W^\dagger_\pi$,
via which one sees that $W_\pi \cong W^\dagger_\pi$.
More precisely, we define the ``left-handed'' Miura transform $\mu^\dagger:W^\dagger_\pi
\rightarrow
U(\h)$ as above but twisting with the shift automorphism $S_{-\tilde\rho^\dagger}$ rather than
$S_{-\tilde\rho}$,
where
\begin{equation}
(\tilde\rho^\dagger|\eps_j) =
\#\left\{\,i\:\big|\:i \preceq^\dagger j\text{ and }
|\row(i)| = \1\right\}-
\#\left\{\,i\:\big|\:i \prec^\dagger j\text{ and }
|\row(i)| = \0\right\}
\end{equation}
and $i \prec^\dagger j$ means either $\col(i) > \col(j)$, or $\col(i) =
\col(j)$ and $\row(i) < \row(j)$.
The analogue of Theorem~\ref{T:main} asserts that $\mu^\dagger$
is injective with the same image as $\mu$.
Hence $\mu^{-1}\circ
\mu^\dagger$, i.e.\ the restriction of the shift
$S_{\tilde\rho-\tilde\rho^\dagger}:U(\p)\rightarrow U(\p)$,
gives an isomorphism between
$W^\dagger_\pi$ and $W_\pi$.
Noting that
\begin{equation}
\tilde\rho - \tilde\rho^\dagger =
\sum_{\substack{1 \leq i,j \leq m+n \\ \col(i) < \col(j)}} (-1)^{|\row(i)|+|\row(j)|} (\eps_i-\eps_j),
\end{equation}
there is a more conceptual explanation for this isomorphism
along the lines of the proof given in the
non-super case in \cite[Corollary 2.9]{BGK}.
\end{Remark}

\begin{Remark}
Another consequence of Theorem~\ref{T:main} together with
Remarks~\ref{iotasr}
and \ref{nodef}
is that up to isomorphism
the algebra $W_\pi$ depends only on the set $\{m,n\}$, i.e.\ on
the isomorphism type of $\mathfrak{g}$,
not on the particular
choice of the pyramid $\pi$ or the parity sequence.
As observed in \cite[Remark 3.10]{Zhao}, this can also be proved
by mimicking \cite[Theorem 2]{BG}.
\end{Remark}

\section{Proof of invariance} \label{S:proof}

In this section, we prove Proposition~\ref{invariant}.
We keep all notation as in the statement of the proposition.
Showing that $u \in U(\p)$ lies in the algebra
$W_\pi$ is equivalent to showing that
$[x,u] \in \m_\chi U(\g)$
for all $x \in \m$, or even just for all $x$
in a set of generators for $\m$.
Let
\begin{equation}\label{om}
\Omega \coloneqq
\{t_{1,1;0}^{(r)}\:|\:r > 0\}
\cup \{t_{1,2;1}^{(r)}\:|\:r > s_{1,2}\} \cup \{t_{2,1;1}^{(r)}\:|\:r
> s_{2,1}\}\cup\{t_{2,2;1}^{(r)}\:|\:r > 0\}.
\end{equation}
Our goal is to show that $[x,u] \in \m_\chi U(\g)$
for $x$ running over a set of generators of $\m$
and $u \in \Omega$. Proposition~\ref{invariant}
follows from this since all the other elements listed in the statement
of the proposition
can be expressed in terms of elements of $\Omega$ thanks to
Proposition~\ref{factorixatin}.
Also observe for the present purposes
that there is some freedom in the choice of the
weight $\tilde \rho$:
it can be adjusted by adding on any multiple of
``supertrace''
$\eps_1+\cdots+\eps_m-\eps_{m+1}-\cdots-\eps_{m+n}$. This just twists
the elements $t_{i,j;\varsigma}^{(r)}$ by an automorphism of $U(\g)$ so
does not have any effect on whether they belong to $W_\pi$.
So sometimes in this section we will allow ourselves to change the choice of
$\tilde\rho$.

\begin{Lemma}\label{invariant1}
Assuming $k=l$, we have that
$[x,t_{i,j;0}^{(r)}] \in \m_\chi U(\g)$ for all $x \in \m$ and
$r > 0$.
\end{Lemma}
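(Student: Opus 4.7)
The plan is to verify $[\bar e_{p,q}, t_{i,j;0}^{(r)}] \in \m_\chi U(\g)$ for all $\bar e_{p,q}$ with $\col(p) > \col(q)$, closely following the non-super calculation of \cite[Lemma 9.3]{BK}. In the rectangular case $k=l$ we have $\sigma = 0$, so the definition (\ref{EQ:Tpidef}) with $\varsigma = 0$ simplifies: the clause ``if $\row(j_a) \leq \varsigma$'' is vacuous, and the only constraints on a chain are $\row(i_{a+1}) = \row(j_a)$, $\col(i_a) \leq \col(j_a)$, $\col(i_{a+1}) > \col(j_a)$, and the total degree condition.

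First I would reduce to the ``single-step'' case $\col(p) = \col(q) + 1$ by a super-Jacobi argument. For $\col(p) - \col(q) \geq 2$ pick an intermediate $s$ with $\col(p) > \col(s) > \col(q)$; since $\bar e_{p,q}$ is (up to a sign $(-1)^{|\row(s)|}$) the super-commutator $[\bar e_{p,s}, \bar e_{s,q}]$, the super-Jacobi identity expresses $[\bar e_{p,q}, t_{i,j;0}^{(r)}]$ as a combination of terms in which $\bar e_{p,s}$ or $\bar e_{s,q}$ multiplies an expression that lies in $\m_\chi U(\g)$ by induction on $\col(p)-\col(q)$. Because $\bar e_{p,s}$ and $\bar e_{s,q}$ themselves lie in $\m$ and differ from elements of $\m_\chi$ only by the scalars $\chi(\bar e_{p,s})$ and $\chi(\bar e_{s,q})$, each such product stays in the right ideal $\m_\chi U(\g)$.

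For the single-step case $\col(p) = \col(q) + 1$, I would apply the super-Leibniz rule termwise to (\ref{EQ:Tpidef}). The super-commutator of $\bar e_{p,q}$ with a length-$s$ chain $\bar e_{i_1,j_1}\cdots\bar e_{i_s,j_s}$ produces, at each position $a$, a ``right hit'' $\delta_{q, i_a}\bar e_{p, j_a}$ and a ``left hit'' $\delta_{p, j_a}\bar e_{i_a, q}$, each with its own super-sign. By a mechanical rearrangement, the resulting sum splits into: a part in which $(q,p)$ appears as a horizontal adjacent pair, which recombines using $\chi(\bar e_{p,q}) = 1$ into an element of $\m_\chi U(\g)$; and a part in which the hit produces a violation of a column constraint, which telescopes to zero via the alternating sign $(-1)^{r-s}$ in (\ref{EQ:Tpidef}) by matching length-$s$ bad terms against length-$(s{-}1)$ terms. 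Terms in which $\row(p) \neq \row(q)$ (so $\chi(\bar e_{p,q}) = 0$) are absorbed into the telescoping part. The combinatorial structure of this cancellation is identical to that in \cite[$\S$9]{BK}.

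The main obstacle is the sign bookkeeping. Each super-commutator of two $\bar e$'s contributes a factor $(-1)^{|\row(\cdot)||\row(\cdot)|}$, and every symbol $\bar e_{p,q}$ absorbs an extra $(-1)^{|\row(p)|}$, so the signs differ from the classical proof. Fortunately, the chain condition $\row(i_{a+1}) = \row(j_a)$ forces the parities of the internal indices to propagate in a controlled way, so the net sign accumulated in any two cancelling contributions depends only on the row parities of the endpoints $i, j, p, q$ (and on $s$ through the alternating factor). A careful but routine check shows that this net sign matches the one in the non-super case, at which point the argument of \cite[Lemma 9.3]{BK} carries over verbatim.
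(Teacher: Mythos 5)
Your overall architecture is sensible: in the rectangular case it does suffice to test against the one-column-step elements $\bar e_{p,q}$ with $\col(p)=\col(q)+1$ (these are exactly the generators $t_{i,j}(e_{c+1,c})$ the paper uses), and your reduction works because $\m_\chi U(\g)$ is stable under $\operatorname{ad}(\m)$. But there is a genuine gap: the entire content of the lemma is the cancellation you compress into ``a mechanical rearrangement \dots which telescopes to zero'' and ``a careful but routine check shows that the net sign matches,'' and none of it is carried out. Two specific points are skipped. First, when a hit creates a negative-degree element $\bar e_{p,j_a}$ or $\bar e_{i_a,q}$, it sits in the \emph{middle} of the monomial, whereas membership in the right ideal $\m_\chi U(\g)$ requires an element of $\m_\chi$ at the far left; so you must commute the created element leftwards past the prefix $\bar e_{i_1,j_1}\cdots\bar e_{i_{a-1},j_{a-1}}$, and the resulting correction terms have to be fed back into your telescoping — you never say how. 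Second, the precise pairing of ``bad'' length-$s$ chains with length-$(s\pm1)$ chains (including the boundary positions $a=1,s$, and the separation of the $\chi=1$ and $\chi=0$ cases) \emph{is} the proof; asserting that it is ``identical'' to the non-super case and ``carries over verbatim'' begs the question, the more so because the non-super source you lean on does not prove invariance by such a termwise telescoping: the argument mimicked here is the column-determinant computation of \cite[$\S$12]{BK}, so there is no existing calculation to quote verbatim. (Your parity-propagation remark is on the right track — by the chain condition the parity of a prefix telescopes to $|i|+|\row(i_a)|$ — but that observation alone does not finish the bookkeeping.) A minor omission: you ignore the twist $S_{\tilde\rho}$ in (\ref{EQ:Tpidef}); when $(|1|,|2|)=(\1,\0)$ one has $\tilde\rho\neq 0$ even for $k=l$, and one must first argue, as the paper does, that $\tilde\rho$ may be replaced by $0$ because it differs by a multiple of the supertrace character.

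For comparison, the paper's proof of Lemma~\ref{invariant1} disposes of exactly these difficulties by a structural device rather than raw bookkeeping: after normalizing $\tilde\rho=0$, it introduces the transfer maps $t_{i,j}:T(M_l)\to U(\g)$ and the single supercommutator identity (\ref{iden}), which concentrates all super-signs into one prefactor $(-1)^{|i||j|+|i||p|+|j||p|}$; it then writes $t_{i,j;0}(u)=t_{i,j}(\cdet A(u))$, so your whole telescoping becomes two row/column operations on column determinants together with the one congruence $t_{h,j}\bigl((u+e_{c,c})e_{c+1,c}\bigr)\equiv t_{h,j}(u+e_{c,c}) \pmod{\m_\chi U(\g)}$, which is where the created $\m$-element is traded for its $\chi$-value. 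If you wish to keep a direct termwise proof you must actually exhibit the leftward-straightening step and the term-by-term pairing; otherwise adopt the determinant formalism, which is what makes the verification genuinely routine.
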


\begin{proof}
Note when $k=l$
that $\tilde\rho = 0$ if $(|1|,|2|) = (\0,\1)$ and $\tilde\rho =
\eps_1+\cdots+\eps_m-\eps_{m+1}-\cdots-\eps_{m+n}$ if $(|1|,|2|) = (\1,\0)$.
As noted above, it does no harm to change the choice of $\tilde\rho$ to
assume in fact that $\tilde\rho = 0$ in both cases.
Now we proceed to mimic the argument in \cite[$\S$12]{BK}.

Consider the tensor algebra $T(M_l)$
in the (purely even)
vector space $M_l$ of $l\times l$ matrices over $\C$.
For $1 \leq i,j \leq 2$, define a linear map
$t_{i,j}:T(M_l) \rightarrow U(\mathfrak{g})$
by setting
\begin{align*}
t_{i,j}(1) \coloneqq \delta_{i,j},\qquad&\qquad
t_{i,j}(e_{a,b}) \coloneqq (-1)^{|i|} e_{i*a, j*b},\\
t_{i,j}(x_1\otimes\cdots\otimes x_r)
&\coloneqq \sum_{1 \leq h_1,\dots,h_{r-1} \leq 2}
t_{i,h_1}(x_1) t_{h_1,h_2}(x_2)\cdots t_{h_{r-1},j}(x_r),
\end{align*}
for $1 \leq a,b \leq p$, $r \geq 1$ and
$x_1,\dots,x_r \in M_l$,
where $i*a$ denotes $a$ if $|i| = \0$ and $l+a$ if $|i|=\1$.
It is straightforward to check for $x, y_1,\dots,y_r \in M_l$ that
\begin{multline}\label{iden}
[t_{i,j}(x), t_{p,q}(y_1\otimes\cdots\otimes y_r)]
=\\
(-1)^{|i||j|+|i||p|+|j||p|}
\sum_{s=1}^r
\big(
t_{p,j}(y_1\otimes\cdots\otimes y_{s-1})
t_{i,q}(x y_s \otimes\cdots\otimes y_r)\\
-
t_{p,j}(y_1\otimes\cdots\otimes y_{s} x)
t_{i,q}(y_{s+1} \otimes\cdots\otimes y_r)
\big),
\end{multline}
where the products $x y_s$ and $y_s x$
on the right are ordinary matrix products in $M_l$.
We extend $t_{i,j}$ to a $\C[u]$-module homomorphism
$T(M_l)[u] \rightarrow U(\g)[u]$ in the obvious way.
Introduce the following matrix with entries in the algebra
$T(M_l)[u]$:
$$
A(u) \coloneqq \left(
\begin{array}{cccccc}
u+e_{1,1} & e_{1,2} & e_{1,3}&\cdots & e_{1,l}\\
1&u+e_{2,2}&&&\vdots\\
0&&\ddots&&e_{l-2,l}\\
\vdots&&1&u+e_{l-1,l-1}&e_{l-1,l}\\
0&\cdots&0&1&u+e_{l,l}
\end{array}
\right)
$$
The point is that $t_{i,j;0}(u) = t_{i,j}(\cdet A(u))$,
where the {\em column determinant}
of an $l \times l$ matrix $A = (a_{i,j})$
with entries in a non-commutative ring means the Laplace
expansion
keeping all the monomials in column order,
i.e.\
$\cdet A  \coloneqq \sum_{w \in S_l} \operatorname{sgn}(w) a_{w(1),1}
\cdots a_{w(l), l}$.
We also write $A_{c, d}(u)$ for the submatrix of
$A(u)$ consisting only of rows and columns
numbered $c,\dots,d$.

Since $\mathfrak m$ is generated by elements of the form
$t_{i,j}(e_{c+1,c})$, it suffices now to show that
$[t_{i,j}(e_{c+1,c}),t_{p,q}(\cdet A(u))]) \in \m_\chi U(\g)$
for every $1 \leq i,j,p,q \leq 2$ and
$c = 1,\dots,l-1$.
To do this, we compute using the identity (\ref{iden}):
\begin{multline*}
[t_{i,j}(e_{c+1,c}), t_{p,q}(\cdet A(u))]
=\\t_{p,j}(\cdet A_{1, c-1}(u))
t_{i,q}
\left(
\cdet
\left(
\begin{array}{ccccc}
e_{c+1,c}&e_{c+1,c+1}&\cdots&e_{c+1,l}\\
1&u+e_{c+1,c+1} &\cdots & e_{c+1,l}\\\vdots&&\ddots&\vdots\\
0&\cdots&1&u+e_{l,l}
\end{array}
\right)\right)\\
-
t_{p,j}
\left(
\cdet
 \left(
\begin{array}{ccccc}
u+e_{1,1} &\cdots & e_{1,c}&e_{1,c}\\
1&\ddots&&\vdots\\
\vdots&&u+e_{c,c}&e_{c,c}\\
0&\cdots&1&e_{c+1,c}
\end{array}
\right)\right)t_{i,q}(\cdet A_{c+2, l}(u)).
\end{multline*}
In order to simplify the second term on right hand side, we observe
crucially for $h=1,2$ that
$t_{h,j}\left(\left(u+e_{c,c}\right) e_{c+1,c}\right)
\equiv
t_{h,j}\left(u+e_{c,c}\right) \pmod{\m_\chi U(\g)}.$
Hence,
we get that
\begin{multline*}
[t_{i,j}(e_{c+1,c}), t_{p,q}(\cdet A(u))]
\equiv\\
t_{p,j}(\cdet A_{1,c-1}(u))t_{i,q}
\left(
\cdet
\left(
\begin{array}{ccccc}
1&e_{c+1,c+1}&\cdots&e_{c+1,l}\\
1&u+e_{c+1,c+1} &\cdots & e_{c+1,l}\\\vdots&&\ddots&\vdots\\
0&\cdots&1&u+e_{l,l}
\end{array}
\right)\right)\\
-
t_{p,j}
\left(
\cdet
 \left(
\begin{array}{ccccc}
u+e_{1,1} &\cdots & e_{1,c}&e_{1,c}\\
1&\ddots&&\vdots\\
\vdots&&u+e_{c,c}&e_{c,c}\\
0&\cdots&1&1
\end{array}
\right)\right) t_{i,q} ( \cdet A_{c+2,l}(u))
\end{multline*}
modulo $\m_\chi U(\g)$.
Making the obvious row and column operations
gives that
\begin{align*}
\cdet\left(
\begin{array}{ccccc}
1&e_{c+1,c+1}&\cdots&e_{c+1,l}\\
1&u+e_{c+1,c+1} &\cdots & e_{c+1,l}\\\vdots&&\ddots&\vdots\\
0&\cdots&1&u+e_{l,l}
\end{array}
\right)
&
=
u \cdet A_{c+2,l}(u),\\
\cdet \left(
\begin{array}{ccccc}
u+e_{1,1} &\cdots & e_{1,c}&e_{1,c}\\
1&\ddots&&\vdots\\
\vdots&&u+e_{c,c}&e_{c,c}\\
0&\cdots&1&1
\end{array}
\right)
&=u \cdet A_{1,c-1}(u).
\end{align*}
It remains to substitute these into the preceeding formula.
\end{proof}

We are ready to prove Proposition~\ref{invariant}.
Our argument goes by induction on
$s_{2,1}+s_{1,2} = l-k$.
For the base case $k=l$,
we use Proposition~\ref{factorixatin} to rewrite the elements of $\Omega$
in terms of the elements $t_{i,j;0}^{(r)}$. The latter
lie in $W_\pi$ by Lemma~\ref{invariant1}. Hence so do the former.

Now assume that $s_{2,1}+s_{1,2} > 0$.
There are two cases according to to whether $s_{1,2} \geq s_{2,1}$
or $s_{2,1} > s_{1,2}$.
Suppose first that $s_{1,2} \geq s_{2,1}$, hence that $s_{1,2} > 0$.
We may as well assume in addition that $l \geq 2$:
the result is trivial for $l \leq 1$ as $\m = \{0\}$.
Let $\dot\pi$ be the pyramid obtained from $\pi$ by removing the
rightmost column.
We will decorate all notation related to $\dot\pi$ with a dot to avoid
any confusion. In particular, $W_{\dot\pi}$ is a subalgebra of
$U(\dot\p) \subseteq U(\dot\g)$.
Let
$$
\theta:U(\dot\g) \hookrightarrow U(\g)
$$
be the embedding sending $e_{i,j} \in \dot\g$ to $e_{i', j'}
\in \g$ if the $i$th and $j$th boxes of $\dot\pi$
correspond to the $i'$th and $j'$th boxes of $\pi$,
respectively.
Let $b$ be the label of the box at the end of the second row of
$\pi$, i.e.\ the box that gets
removed when passing from $\pi$ to $\dot\pi$. Also in the case that $s_{1,2} = 1$ let
 $c$ be the label of the box at the end
of the first row of $\pi$.

\begin{Lemma}\label{ind1}
In the above notation, the following hold:
\begin{itemize}
\item[(i)]
$t_{1,1;0}^{(r)} = \theta(\dot t_{1,1;0}^{(r)})$ for all $r > 0$;
\item[(ii)]
$t_{2,1;1}^{(r)} = \theta(\dot t_{2,1;1}^{(r)})$
for all $r > s_{2,1}$;
\item[(iii)]
$t_{1,2;1}^{(r)} =
\theta(\dot t_{1,2;1}^{(r)})
+ \theta(\dot t_{1,2;1}^{(r-1)}) S_{\tilde\rho}(\bar e_{b,b})
- \big[\theta(\dot t_{1,2;1}^{(r-1)}), e_{b-1,b}\big]$
for all $r > s_{1,2}$;
\item[(iv)]
$t_{2,2;1}^{(r)} = \theta(\dot t_{2,2;1}^{(r)})
+ \theta(\dot t_{2,2;1}^{(r-1)}) S_{\tilde\rho}(\bar e_{b,b})
- \big[\theta(\dot t_{2,2;1}^{(r-1)}), e_{b-1,b}\big]$
for all $r > 0$.
\end{itemize}
\end{Lemma}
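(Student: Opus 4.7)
The plan is to unpack the definition~(\ref{EQ:Tpidef}) and perform a path-by-path comparison of the sums defining $t^{(r)}_{i,j;\varsigma}$ for $\pi$ and $\dot\pi$. Write $\bar t^{(r)}_{i,j;\varsigma}$ for the raw sum inside $S_{\tilde\rho}(\cdots)$ on the right hand side of~(\ref{EQ:Tpidef}), and decorate the analogous $\dot\pi$-objects with a dot. A direct check from~(\ref{rhodef}) shows that $\dot{\tilde\rho}(e_{p,p}) = \tilde\rho(e_{p',p'})$ whenever $p \in \dot\pi$ corresponds to $p' \in \pi$ under $\theta$, since the removed box $b$ sits alone in column $l$ and hence lies strictly after every such $p'$ in the order $\prec$. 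Consequently $S_{\tilde\rho} \circ \theta = \theta \circ S_{\dot{\tilde\rho}}$ on $U(\dot\g)$, so it suffices to prove raw-sum analogues of (i)--(iv) and apply $S_{\tilde\rho}$ at the end, using that $S_{\tilde\rho}$ is an algebra automorphism fixing the off-diagonal element $e_{b-1,b}$.

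The key combinatorial input is that, since $b$ sits in row $2$ of the last column, the constraint ``$\row(j_a) > \varsigma \Rightarrow \col(i_{a+1}) > \col(j_a)$'' leaves no legal continuation after a path hits $b$. Hence $b$ can appear only in the final letter, either as $j_s$ or as $i_s = j_s = b$. In parts (i) and (ii) the requirement $\row(j_s) = 1$ immediately excludes any such visit, so no $\pi$-path meets $b$ and the raw sums agree with their $\dot\pi$-counterparts, giving (i) and (ii) after applying $S_{\tilde\rho}$.

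For (iii) and (iv), paths visiting $b$ split into two subcases. If the final letter is $\bar e_{b,b}$, then the prefix is a $\dot\pi$-path of length $s-1$ ending in row $2$ with degree sum $r-s$, and the sign factor associated to $\row(j_{s-1}) = 2 > \varsigma$ matches exactly, yielding the contribution $\theta(\bar{\dot t}^{(r-1)}_{i,2;1}) \, \bar e_{b,b}$. If instead the final letter is $\bar e_{i_s, b}$ with $i_s \neq b$, then deleting the column-$l$ endpoint sets up a bijection with $\dot\pi$-paths of length $s$ ending with $\bar e_{i_s, b-1}$: the degree sum drops by one (so the $\dot\pi$-side is $\bar{\dot t}^{(r-1)}_{i,2;1}$), and the overall sign picks up a factor of $-1$ from comparing $(-1)^{r-s}$ with $(-1)^{(r-1)-s}$.

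The main obstacle is matching this last subcase with $-[\theta(\bar{\dot t}^{(r-1)}_{i,2;1}), e_{b-1,b}]$. The bracket formula gives $[\bar e_{i_s, b-1}, e_{b-1,b}] = \bar e_{i_s,b}$ directly, but I must rule out spurious contributions to the commutator from other occurrences of $b-1$ in the $\dot\pi$-paths. The crucial observation is that if $j_a = b-1$ for some $a < s'$ in such a path, then $\row(j_a) = 2 > 1$ would force $\col(i_{a+1}) > l-1$, which is impossible since every column of $\dot\pi$ has index at most $l-1$; and $[e_{b-1,q}, e_{b-1,b}] = 0$ for $q \notin \{b-1, b\}$, so occurrences of $b-1$ as an $i_a$ in the prefix contribute zero. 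Hence the commutator picks out exactly the $\dot\pi$-paths ending at $b-1$, with the final letter promoted to $\bar e_{i_s, b}$, and combining the three contributions before applying $S_{\tilde\rho}$ yields (iii) and (iv).
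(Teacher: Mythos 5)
Your argument is correct and is essentially the paper's own proof: the paper disposes of this lemma by saying it ``follows directly from the definition'' together with the identity $\theta \circ S_{\dot{\tilde\rho}} = S_{\tilde\rho} \circ \theta$ on $U(\dot{\p})$, which is exactly the reduction you make. Your path-by-path bookkeeping at the removed box $b$ (no visits possible in (i)--(ii); last-letter visits split into the $\bar e_{b,b}$ term and the commutator term, with $j_a = b-1$ forced to occur only in the final letter) is a correct spelling-out of that direct verification.
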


\begin{proof}
This follows directly from the definition of these elements, using
also that
$\theta \circ S_{\dot{\tilde\rho}} = S_{\tilde\rho} \circ \theta$
on elements of $U(\dot\p)$.
\end{proof}

Observe next that $\m$ is generated by
$\theta(\dot\m) \cup J$ where
\begin{equation}
J \coloneqq
\left\{
\begin{array}{ll}
\{e_{b,c},e_{b,b-1}\}&\text{if $s_{1,2}=1$,}\\
\{e_{b,b-1}\}&\text{if $s_{1,2}>1$.}
\end{array}\right.
\end{equation}
We know by induction that
the following elements of $U(\dot\p)$
belong to $W_{\dot\pi}$:
all $\dot t_{1,1;0}^{(r)}$ and $\dot t_{2,2;1}^{(r)}$ for $r \geq 0$;
all $\dot t_{1,2;1}^{(r)}$ for $r \geq s_{1,2}$;
all $\dot t_{2,1;1}^{(r)}$ for $r > s_{2,1}$.
Also note that the elements of $\theta(\dot\m)$ commute
with $e_{b-1,b}$ and $S_{\tilde\rho}(\bar e_{b,b})$.
Combined with Lemma~\ref{ind1}, we deduce that
$[\theta(x),u] \in \theta(\dot\m_\chi) U(\g) \subseteq \m_\chi U(\g)$
for any $x \in \dot\m$ and
$u \in \Omega$.
It remains to show  that
$[x,u] \in \m_\chi U(\g)$
for each $x \in J$ and $u \in \Omega$.
This is done in Lemmas~\ref{l1}, \ref{l2} and \ref{l4} below.

\begin{Lemma}\label{l1}
For $x \in J$ and
$u \in \{t_{1,1;0}^{(r)}\:|\:r > 0\}\cup\{t_{2,1;1}^{(r)}\:|\:r >
s_{2,1}\}$, we have that
$[x,u] \in \m_\chi U(\g)$.
\end{Lemma}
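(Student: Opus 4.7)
The plan is to use Lemma~\ref{ind1}(i),(ii) to express $u$ as $\theta(\dot u)$, so that $u$ is a polynomial in matrix units $e_{p,q}$ whose indices lie in the set of labels of boxes of $\dot\pi$; in particular $p, q \neq b$. Under this hypothesis the supercommutator $[x, e_{p,q}]$ collapses to $\delta_{b-1,p}\,e_{b,q}$ for $x = e_{b,b-1}$, and to $\delta_{c,p}\,e_{b,q}$ for $x = e_{b,c}$, since the ``other'' term $\delta_{b,q}\,e_{p,\ast}$ vanishes identically. By the super-Leibniz rule, each contribution to $[x,u]$ is therefore a monomial in matrix units of $\dot\pi$ with a single factor $e_{b,q}$ inserted at one position.

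For $u = t_{1,1;0}^{(r)}$ the verification is immediate: by (\ref{EQ:Tpidef}) every matrix unit has both indices in row~$1$, so $[e_{b,b-1}, t_{1,1;0}^{(r)}]=0$ outright (as $b-1$ is in row~$2$), while for $[e_{b,c}, t_{1,1;0}^{(r)}]$ the column constraint $\col(p)\leq\col(q)\leq l-1$ forces $q=c$, producing the element $e_{b,c}$ itself, which lies in $\m_\chi$ since $b$ and $c$ are not horizontally adjacent in $\pi$. For $u = t_{2,1;1}^{(r)}$ the same column inequality, combined with the row/column alternation conditions in (\ref{EQ:Tpidef}), forces the companion index $j_a$ of any $i_a \in \{b-1,c\}$ to satisfy $\col(j_a)=l-1$, and a short case check on the permitted rows rules out $j_a = b-1$; hence the generated $e_{b,j_a}$ again lies in $\m_\chi$.

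It then remains to show that a monomial of the form $M\cdot e_{b,q}\cdot M'$, with $e_{b,q}\in\m_\chi$ and $M, M'$ monomials in matrix units of $\dot\pi$, belongs to $\m_\chi U(\g)$. I would argue this by induction on the length of $M$: commuting $e_{b,q}$ one step to the left produces a correction supercommutator which (because the other indices also avoid $b$) is again of the form $\pm e_{b,q'}$ with $q'$ not horizontally adjacent to $b$, hence again in $\m_\chi$; the induction then applies. The main obstacle I anticipate is the sign and scalar bookkeeping during these leftward commutations, particularly in the $t_{2,1;1}^{(r)}$ case where the rows of the indices alternate, but by analogy with the argument in \cite[$\S$12]{BK} this reduces to a finite, mechanical case analysis.
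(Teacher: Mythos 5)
Your overall strategy (reduce to matrix units avoiding the box $b$ via Lemma~\ref{ind1}(i),(ii), identify the factors that fail to supercommute with $x$, observe that the resulting element $e_{b,q}$ lies in $\m_\chi$, and commute it to the front) is essentially the paper's argument, but there is a genuine error in the middle step. It is not true that every matrix unit occurring in the expansion (\ref{EQ:Tpidef}) of $t_{1,1;0}^{(r)}$ has both indices in row $1$: the constraints only force $\row(i_1)=\row(j_s)=1$, while intermediate factors may have indices in row $2$ (the chain may pass from row $1$ down to row $2$ and back, since $\varsigma=0$ only imposes $\col(i_{a+1})>\col(j_a)$). Consequently your claim that $[e_{b,b-1},t_{1,1;0}^{(r)}]=0$ outright is false. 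For a concrete counterexample take $s_{2,1}=0$, $s_{1,2}=1$, $k=2$, $l=3$: then $t_{1,1;0}^{(2)}$ contains the term $S_{\tilde\rho}(\bar e_{i_1,j_1}\bar e_{i_2,j_2})$ with $i_1,j_1$ the boxes in column $1$ (rows $1,2$) and $i_2=b-1$, $j_2=c$ the boxes in column $2$, and $[e_{b,b-1},\bar e_{b-1,c}]=\pm e_{b,c}\neq 0$. So the dangerous factors $\bar e_{d,c}$ (with $d\in\{b-1,c\}$, arising when $s_{1,2}=1$) occur for \emph{both} families of $u$, not only for $t_{2,1;1}^{(r)}$; the correct conclusion, as in the paper, is that the only non-supercommuting factor is $\bar e_{i_h,j_h}$ with $i_h=d$, $j_h=c$, producing $\pm e_{b,c}\in\m_\chi$, which must then be moved to the beginning. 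Your case analysis for $t_{2,1;1}^{(r)}$ is essentially this argument; you simply cannot dispose of $t_{1,1;0}^{(r)}$ ``immediately.''

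A secondary weak point is in your final commuting step: the reason the correction terms stay in $\m_\chi$ is \emph{not} that ``the other indices avoid $b$.'' Avoiding $b$ only kills one of the two terms of each supercommutator; a priori the surviving term could be $e_{b,b-1}$, which is \emph{not} in $\m_\chi$ because $\chi(e_{b,b-1})=(-1)^{|2|}\neq 0$ ($b-1$ is horizontally adjacent to $b$). One must rule out $j_a=b-1$ as the companion of $i_a=c$ by the same column/termination analysis you used for $t_{2,1;1}^{(r)}$ (if $\row(j_a)=2$ and $\col(j_a)=l-1$ then either $a=s$, contradicting $\row(j_s)=1$, or $\col(i_{a+1})>l-1$ forces $i_{a+1}=b$, which is excluded). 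With these two repairs your proof coincides with the paper's; the distinction between the present lemma and Lemmas~\ref{l2} and~\ref{l4} is precisely that for $t_{1,2;1}^{(r)}$ and $t_{2,2;1}^{(r)}$ such $e_{b,b-1}$-terms genuinely do appear, which is why those cases need a different argument.
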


\begin{proof}
Take $e_{b,d} \in J$.
Consider a monomial $S_{\tilde\rho}(\bar e_{i_1,j_1}\cdots \bar
e_{i_s,j_s})$
in the expansion of $u$ from (\ref{EQ:Tpidef}).
The only way it could fail to
supercommute with $e_{b,d}$ is if
it involves some $\bar e_{i_h,j_h}$ with
$j_h = b$ or $i_h = d$.
Since $\row(j_s) = 1$
and $\col(i_{h+1}) > \col(j_h)$ when $\row(j_h) = 2$,
this situation arises only if $s_{1,2}=1$, $i_h =d$ and
$j_h = c$.
Then the supercommutator
$[e_{b,d},\bar e_{i_h,j_h}]$
equals
$\pm e_{b,c}$. It remains to repeat this argument to see that
we can move the resulting
$e_{b,c} \in \m_\chi$ to the beginning.
\end{proof}

It is harder to deal with the remaining elements
$t_{1,2;1}^{(r)}$ and $t_{2,2;1}^{(r)}$ of $\Omega$.
We follow different approaches according to whether $s_{1,2} > 1$
or $s_{1,2}=1$.

\begin{Lemma}\label{l2}
Assume that $s_{1,2}> 1$.
We have that $[e_{b,b-1},u] \in \m_\chi U(\g)$
for all $u \in \{t_{1,2;1}^{(r)}\:|\:r > s_{1,2}\}\cup\{t_{2,2;1}^{(r)}\:|\:r >
0\}$.
\end{Lemma}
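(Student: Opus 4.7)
The approach is a direct commutator computation from the explicit formula (\ref{EQ:Tpidef}), running parallel to the proof of Lemma~\ref{invariant1}. The key geometric input is that when $s_{1,2} > 1$ both $b$ and $b-1$ occupy row-2 boxes in columns of height one (namely columns $l$ and $l-1$ of $\pi$), so $e_{b,b-1}$ is an even matrix unit; moreover $\Diagram{$\scriptstyle b-1$ & $\scriptstyle b$ \cr}$ is an adjacent pair, hence $\bar{e}_{b,b-1} - 1 \in \m_\chi$.

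The first step is to catalogue the monomials $\bar{e}_{i_1,j_1}\cdots\bar{e}_{i_s,j_s}$ appearing in the expansion of $t_{1,2;1}^{(r)}$ or $t_{2,2;1}^{(r)}$ on which $e_{b,b-1}$ acts non-trivially. Using the constraints in the defining sum with $\varsigma = 1$---in particular $\operatorname{row}(j_a) = 2 \Rightarrow \operatorname{col}(i_{a+1}) > \operatorname{col}(j_a)$---one shows that $j_h = b$ forces $h = s$, and an interior occurrence $i_h = b-1$ (with $h > 1$) forces $\operatorname{col}(j_{h-1}) \leq l - 2$ and $j_h \in \{b-1, b\}$. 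Consequently, in every non-trivially contributing monomial the factors strictly to the left of any such ``dangerous'' position have column indices $\leq l - 2$.

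Next, applying Leibniz yields
\[
[e_{b,b-1},\,\bar{e}_{i_1,j_1}\cdots\bar{e}_{i_s,j_s}] = \sum_{h=1}^{s}(\pm)\,\bar{e}_{i_1,j_1}\cdots [e_{b,b-1},\bar{e}_{i_h,j_h}]\cdots\bar{e}_{i_s,j_s},
\]
decomposing into ``trailing'' contributions (from $j_s = b$, which replace $\bar{e}_{i_s,b}$ by $\bar{e}_{i_s,b-1}$) and ``interior'' contributions (from $i_h = b-1$, which replace $\bar{e}_{b-1,j_h}$ by $\bar{e}_{b,j_h}$). I would show that, after summing over all relevant monomials, these two families pair up and cancel telescopically except for a remainder of the form $(\bar{e}_{b,b-1} - 1)\cdot X$ with $X \in U(\g)$. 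Since $\bar{e}_{b,b-1}$ supercommutes with every $\bar{e}_{p,q}$ whose column indices are both $\leq l - 2$, the established column restriction lets us pull $\bar{e}_{b,b-1} - 1 \in \m_\chi$ to the far left, placing the commutator in $\m_\chi U(\g)$ as required.

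The main obstacle is the combinatorial bookkeeping in the matching step: verifying that the signs $(-1)^{\#\{a : \operatorname{row}(j_a) \leq \varsigma\}}$ together with the parity signs in (\ref{bare}) and the $\tilde\rho$-shift line up so that trailing and interior contributions cancel in pairs, leaving precisely an element of the desired form. This is the super-analogue of the explicit calculation underlying Lemma~\ref{invariant1}, and although the strategy is essentially the same, the parity sequence $(|1|,|2|)$ forces careful sign tracking throughout.
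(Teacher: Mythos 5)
Your preliminary reduction is fine: with $s_{1,2}>1$ the boxes $b-1,b$ sit in the two rightmost columns, which have height one, so your catalogue of ``dangerous'' factors is correct (any $j_h=b$ forces $h=s$, any $i_h=b-1$ is confined to the last one or two positions, and everything to the left lives in columns $\leq l-2$), and $\bar e_{b,b-1}-1\in\m_\chi$ as you say. The problem is that the step you defer -- ``the two families pair up and cancel telescopically except for a remainder of the form $(\bar e_{b,b-1}-1)X$'' -- is precisely the content of the lemma, and as described the mechanism is not accurate. The trailing and interior contributions do not cancel in pairs: after moving the produced factors of $\bar e_{b,b-1}$ to the left one picks up reordering corrections from $[\,\bar e_{b,b-1},\bar e_{b-1,b-1}]$ and $[\,\bar e_{b,b-1},\bar e_{b,b}]$, and the constants introduced by $S_{\tilde\rho}$ on these diagonal factors enter additively, not as signs. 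What survives, modulo $\m_\chi U(\g)$, is a lower-order invariant (essentially $t_{1,2;1}^{(r-2)}$, resp.\ $t_{2,2;1}^{(r-2)}$, of the pyramid with the last two columns removed) multiplied by the scalar $(\tilde\rho|\eps_b)-(\tilde\rho|\eps_{b-1})+(-1)^{|2|}$; this term is \emph{not} of the form $(\bar e_{b,b-1}-1)X$, and the lemma holds only because this scalar vanishes by the definition (\ref{rhodef}) of $\tilde\rho$. Already for $r=2$ and $t_{2,2;1}^{(2)}$ (columns of heights $2,1,1$) a direct expansion of (\ref{EQ:Tpidef}) gives $[e_{b,b-1},t_{2,2;1}^{(2)}]=(\bar e_{b,b-1}-1)(\bar e_{b,b}-\bar e_{b-1,b-1})+c\,\bar e_{b,b-1}$ with $c=(\tilde\rho|\eps_b)-(\tilde\rho|\eps_{b-1})+(-1)^{|2|}$: the genuine input is the quantitative identity $c=0$, which your ``sign bookkeeping'' paragraph gestures at but never establishes. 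Since that verification is exactly where the work lies, the proposal as it stands has a real gap.

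For comparison, the paper avoids the unstructured bookkeeping altogether: it applies the column-removal recursion of Lemma~\ref{ind1}(iii) twice, writing $t_{1,2;1}^{(r)}$ (and likewise $t_{2,2;1}^{(r)}$) in terms of elements $\phi(\ddot t_{1,2;1}^{(r-j)})$, $j=0,1,2$, attached to the pyramid with the last two columns deleted -- these visibly commute with $e_{b,b-1}$ -- multiplied by the handful of boundary elements $S_{\tilde\rho}(\bar e_{b-1,b-1})$, $S_{\tilde\rho}(\bar e_{b,b})$, $\bar e_{b-1,b}$, $e_{b-2,b-1}$, $e_{b-2,b}$. The commutator then collapses in a few lines, and after replacing $\bar e_{b,b-1}$ by $1$ modulo $\m_\chi U(\g)$ everything reduces to the single identity $(\tilde\rho|\eps_b)-(\tilde\rho|\eps_{b-1})+(-1)^{|2|}=0$. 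Your direct attack on (\ref{EQ:Tpidef}) in the spirit of Lemma~\ref{invariant1} could in principle be pushed through, but it would amount to re-deriving this recursion term by term; to make it a proof you must actually exhibit the matching, the reordering corrections, and the $\tilde\rho$-identity rather than assert that they ``line up.''
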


\begin{proof}
We just explain in detail for $u= t_{1,2;1}^{(r)}$; the other case
follows the same pattern.
Let $\ddot\pi$ be the pyramid obtained
from $\pi$ by removing its rightmost two columns.
We decorate all notation associated to $W_{\ddot \pi}$
with a double dot, so $W_{\ddot\pi} \subseteq U(\ddot \p)
\subseteq U(\ddot \g)$ and so on.
Let
$$
\phi:U(\ddot\g) \hookrightarrow U(\g)
$$
be the embedding sending $e_{i,j} \in \ddot\g$ to $e_{i',j'} \in \g$
where the $i$th and $j$th boxes of $\ddot\pi$ are labelled by $i$ and
$j$ in $\pi$,
respectively.
For $r \geq s_{1,2}$, we have by analogy with Lemma~\ref{ind1}(iii) that
$$
\theta(\dot t_{1,2;1}^{(r)}) =
\phi(\ddot t_{1,2;1}^{(r)})
+ \phi(\ddot t_{1,2;1}^{(r-1)}) S_{\tilde\rho}(\bar e_{b-1,b-1})
- \big[\phi(\ddot t_{1,2;1}^{(r-1)}), e_{b-2,b-1}\big].
$$
We combine this with Lemma~\ref{ind1}(iii)
to deduce for $r > s_{1,2}$ that
\begin{align*}
t_{1,2;1}^{(r)}
=\,&\phi(\ddot t_{1,2;1}^{(r)})
+ \phi(\ddot t_{1,2;1}^{(r-1)}) S_{\tilde\rho}(\bar e_{b-1,b-1})
- \big[\phi(\ddot t_{1,2;1}^{(r-1)}),e_{b-2,b-1}\big]\\
&+\phi(\ddot t_{1,2;1}^{(r-1)}) S_{\tilde\rho}(\bar e_{b,b})
+ \phi(\ddot t_{1,2;1}^{(r-2)}) S_{\tilde\rho}(\bar
e_{b-1,b-1})S_{\tilde\rho}(\bar e_{b,b})
\\&
- \big[\phi(\ddot
t_{1,2;1}^{(r-2)}),e_{b-2,b-1}\big]S_{\tilde\rho}(\bar e_{b,b})-
\phi(\ddot t_{1,2;1}^{(r-2)})\bar e_{b-1,b}
+ \big[\phi(\ddot t_{1,2;1}^{(r-2)}),e_{b-2,b}\big].
\end{align*}
We deduce that
\begin{align*}
[e_{b,b-1},t_{1,2;1}^{(r)}]
=\,&
\phi(\ddot t_{1,2;1}^{(r-2)})
\big(\bar e_{b,b-1} S_{\tilde\rho}(\bar e_{b,b})
-
\bar e_{b,b-1}  S_{\tilde\rho}(\bar e_{b-1,b-1})
+(-1)^{|2|}\bar e_{b,b-1}
\big)\\
&
+ \big[\phi(\ddot
t_{1,2;1}^{(r-2)}),e_{b-2,b-1}\big]\bar e_{b,b-1}-
\phi(\ddot t_{1,2;1}^{(r-2)})(\bar e_{b,b}-\bar e_{b-1,b-1})\\
&- \big[\phi(\ddot t_{1,2;1}^{(r-2)}),e_{b-2,b-1}\big].
\end{align*}
Working modulo $\m_\chi U(\g)$, we can replace all
$\bar e_{b,b-1}$ by $1$. Then we are reduced just to checking that
$$
S_{\tilde\rho}(\bar e_{b,b})
- S_{\tilde\rho}(\bar e_{b-1,b-1})
+ (-1)^{|2|}
=
\bar e_{b,b}-\bar e_{b-1,b-1}.
$$
This follows because $(\tilde\rho|\eps_b)-(\tilde\rho|\eps_{b-1}) +
(-1)^{|2|} = 0$ by the definition (\ref{rhodef}).
\end{proof}

\begin{Lemma}\label{bird}
Assume that $s_{1,2}=1$.
For $r > 2$ we have that
\begin{align}\label{lastone}
t_{1,2;1}^{(r)}
&=
(-1)^{|1|}\big[t_{1,1;0}^{(2)}, t_{1,2;1}^{(r-1)}\big] -
t_{1,1;0}^{(1)} t_{1,2;1}^{(r-1)},\\
t_{2,2;1}^{(r)}
&=
(-1)^{|1|}\big[t_{1,2;1}^{(2)}, t_{2,1;1}^{(r-1)}\big] -
\sum_{a=0}^{r} t_{1,1;1}^{(a)} t_{2,2;1}^{(r-a)}.
\label{lasttwo}
\end{align}
\end{Lemma}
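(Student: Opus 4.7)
Both \eqref{lastone} and \eqref{lasttwo} are formal identities in $U(\p)$, so my plan is to verify them by direct manipulation of the combinatorial definition \eqref{EQ:Tpidef}, using Proposition \ref{factorixatin} to pass freely between $t_{i,j;0}(u)$ and $t_{i,j;\varsigma}(u)$ for $\varsigma > 0$. The shape of the identities is very reminiscent of the Drinfeld relations from Theorem \ref{T:gens}: under the (not yet established) isomorphism $\mu$ of Theorem \ref{T:main}, \eqref{lastone} becomes the Yangian relation $[d_1^{(2)},e^{(r-1)}]=(-1)^{|1|}\bigl(e^{(r)}+d_1^{(1)}e^{(r-1)}\bigr)$. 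The task is to establish the $U(\p)$-analogue of this relation directly, without using $\mu$.

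For \eqref{lastone}, the cleanest route is to package the claim for all $r > 2$ into a single generating-function identity in $U(\p)\llbracket u^{-1}\rrbracket$ expressing $t_{1,2;1}(u)$ in terms of a commutator with $t_{1,1;0}^{(2)}$ and a correction $-t_{1,1;0}^{(1)} t_{1,2;1}(u)$. To check this I would expand $t_{1,1;0}^{(2)}$ via \eqref{EQ:Tpidef} as a sum of length-one and length-two monomials in the $\bar e_{p,q}$; the special feature of our case $s_{1,2}=1$ is that the rightmost column of $\pi$ has height one, so the length-one contribution to $t_{1,1;0}^{(2)}$ is concentrated in that column and interacts with the tuples in $t_{1,2;1}(u)$ in a very controlled way. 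Distributing the supercommutator over the monomial expansion of $t_{1,2;1}(u)$ and invoking the general matrix-unit commutator formula \eqref{iden} from the proof of Lemma \ref{invariant1}, one should find that the length-two piece of $t_{1,1;0}^{(2)}$ contributes exactly $t_{1,1;0}^{(1)}t_{1,2;1}(u)$ to the commutator, which is absorbed by the counterterm, leaving the desired $t_{1,2;1}(u)$ after an index shift.

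The identity \eqref{lasttwo} would be handled analogously, starting from $[t_{1,2;1}^{(2)}, t_{2,1;1}(u)]$. Here the crucial reformulation is the observation that $\sum_{a=0}^{r} t_{1,1;1}^{(a)} t_{2,2;1}^{(r-a)}$ is the $u^{-r}$-coefficient of the product $t_{1,1;0}(u)^{-1}\, t_{2,2;1}(u)$ by \eqref{inv1}. After multiplying by $t_{1,1;0}(u)$ and substituting \eqref{inv2}--\eqref{inv5}, the identity translates into an assertion about a commutator among the unshifted series $t_{i,j;0}(u)$, again accessible by the column-determinant formalism used in Lemma \ref{invariant1}.

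\textbf{The main obstacle} is combinatorial bookkeeping rather than any conceptual difficulty. One must carefully track (a) the sign factor $(-1)^{\#\{a\,:\,\row(j_a)\le \varsigma\}}$ from \eqref{EQ:Tpidef}, which changes with $\varsigma$ across the two sides; (b) the shift automorphism $S_{\tilde\rho}$, whose effect on diagonal matrix units depends on the parity sequence and on $\pi$; and (c) the supercommutator signs arising when odd monomials cross each other. Identifying precisely which length-$r$ monomials in the commutator expansion reassemble into $t_{1,2;1}^{(r)}$ (respectively $t_{2,2;1}^{(r)}$) and which are canceled by the product counterterms on the right-hand side is the delicate part of the matching, and this is where errors are most likely to creep in.
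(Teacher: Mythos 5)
You are right that \eqref{lastone}--\eqref{lasttwo} are identities between explicitly defined elements of $U(\p)$, so a direct verification from \eqref{EQ:Tpidef} is a legitimate strategy in principle, and your translation of \eqref{lastone} into the relation $[d_1^{(2)},e^{(r-1)}]=(-1)^{|1|}(e^{(r)}+d_1^{(1)}e^{(r-1)})$ is the right way to think about it. But what you have written is a plan, not a proof: the entire content of the lemma is the cancellation you defer with ``one should find that\dots'', and the mechanism you predict is not correct. First, when $s_{1,2}=1$ the rightmost column has \emph{no} box in the first row, so $t_{1,1;0}^{(2)}$ contains no matrix units from that column at all: its length-one part consists of $\bar e_{p,q}$ for horizontally adjacent first-row boxes, all in columns $\leq l-1$, so the ``special feature'' you invoke (length-one terms concentrated in the last column) is simply not there. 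Second, your proposed matching of terms fails in checkable cases: if $k=1$ one sees directly from \eqref{EQ:Tpidef} that $t_{1,1;0}^{(2)}=0$, so the commutator contributes nothing and \eqref{lastone} asserts $t_{1,2;1}^{(r)}=-t_{1,1;0}^{(1)}t_{1,2;1}^{(r-1)}$; here the counterterm produces the whole left-hand side rather than merely cancelling the length-two piece of a commutator, which is incompatible with the bookkeeping you describe. Third, for \eqref{lasttwo} you appeal to the column-determinant formalism of Lemma~\ref{invariant1}, but that formalism (the map $t_{i,j}:T(M_l)\rightarrow U(\g)$ and the identity $t_{i,j;0}(u)=t_{i,j}(\cdet A(u))$) was set up only in the rectangular case $k=l$, which is exactly excluded when $s_{1,2}=1$; extending it to general $\pi$, with the $\varsigma$-dependent column constraints and the twist $S_{\tilde\rho}$, is itself a substantial missing step.

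You are also making the problem harder than it needs to be at this point of the argument. Lemma~\ref{bird} sits inside the induction on $s_{2,1}+s_{1,2}$ in Section~\ref{S:proof}, so Theorem~\ref{T:main} may be assumed for the pyramid $\dot\pi$ obtained by deleting the rightmost column: the dotted elements lie in $W_{\dot\pi}$ and the injective Miura map $\dot\mu$ sends them to Drinfeld generators, so the relations of Theorem~\ref{T:gens} pull back to give exactly the dotted analogues of \eqref{lastone}--\eqref{lasttwo} inside $U(\dot\p)$ (valid for all $r\geq 2$, since the shift matrix of $\dot\pi$ has $(1,2)$-entry $0$). The paper then transfers these to $\pi$ via the recursion of Lemma~\ref{ind1}, using essentially only that $t_{1,1;0}^{(1)}$ and $t_{1,1;0}^{(2)}$ involve no column-$l$ matrix units and hence commute with $S_{\tilde\rho}(\bar e_{b,b})$ and $e_{b-1,b}$. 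If you insist on bypassing this and working directly in $U(\p)$, you must actually exhibit the reassembly and cancellation of monomials with all signs and the shift $S_{\tilde\rho}$ tracked; since your proposal neither does this nor correctly identifies how the terms match up, the lemma remains unproved as written.
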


\begin{proof}
We prove (\ref{lastone}).
The induction
hypothesis means that we can appeal to Theorem~\ref{T:main}
for the algebra $W_{\dot\pi}$.
Hence using the relations from Theorem~\ref{T:gens},
we know that the following hold in the algebra
$W_{\dot\pi}$ for all $r \geq 2$:
\begin{align*}
\dot t_{1,2;1}^{(r)} = (-1)^{|1|} \big[ \dot t_{1,1;0}^{(2)}, \dot
t_{1,2;1}^{(r-1)} \big]
- \dot t_{1,1;0}^{(1)} \dot t_{1,2;1}^{(r-1)}.
\end{align*}
Using Lemma~\ref{ind1}, we deduce for $r > 2$
that
\begin{align*}
t_{1,2;1}^{(r)} =\,
&\theta(\dot t_{1,2;1}^{(r)})
+ \theta(\dot t_{1,2;1}^{(r-1)}) S_{\tilde\rho}(\bar e_{b,b})
- \big[\theta(\dot t_{1,2;1}^{(r-1)}), e_{b-1,b}\big]
\\
=\,&
(-1)^{|1|} \big[ t_{1,1;0}^{(2)}, \theta(\dot
t_{1,2;1}^{(r-1)}) \big]
- t_{1,1;0}^{(1)} \theta(\dot t_{1,2;1}^{(r-1)})\\
&+
(-1)^{|1|} \big[ t_{1,1;0}^{(2)}, \theta(\dot
t_{1,2;1}^{(r-2)})\big] S_{\tilde\rho}(\bar e_{b,b})
- t_{1,1;0}^{(1)} \theta(\dot t_{1,2;1}^{(r-2)})S_{\tilde\rho}(\bar e_{b,b})\\
&-(-1)^{|1|}
\Big[\big[ t_{1,1;0}^{(2)}, \theta(\dot
t_{1,2;1}^{(r-2)}) \big], e_{b-1,b}\Big]
+ \big[t_{1,1;0}^{(1)} \theta(\dot t_{1,2;1}^{(r-2)}),
e_{b-1,b}\big]\\
=\,&
(-1)^{|1|} \Big[ t_{1,1;0}^{(2)}, \theta(\dot
t_{1,2;1}^{(r-1)}) +\theta(\dot
t_{1,2;1}^{(r-2)}) S_{\tilde\rho}(\bar e_{b,b})
-
\big[\theta(\dot
t_{1,2;1}^{(r-2)}), e_{b-1,b}
\big]\Big]\\
&
- t_{1,1;0}^{(1)} \left(
\theta(\dot t_{1,2;1}^{(r-1)})
+ \theta(\dot t_{1,2;1}^{(r-2)})S_{\tilde\rho}(\bar e_{b,b})
- \big[\theta(\dot t_{1,2;1}^{(r-2)}),
e_{b-1,b}\big]\right)\\
=\,&
(-1)^{|1|} \big[t_{1,1;0}^{(2)}, t_{1,2;1}^{(r-1)}\big] - t_{1,1;0}^{(1)}t_{1,2;1}^{(r-1)}.
\end{align*}
The other equation (\ref{lasttwo}) follows by a similar
trick.
\end{proof}

\begin{Lemma}\label{l4}
Assume that $s_{1,2}=1$.
We have that $[x,u] \in \m_\chi U(\g)$
for all $x \in J$ and $u \in \{t_{1,2;1}^{(r)}\:|\:r > s_{1,2}\}
\cup \{t_{2,2;1}^{(r)}\:|\:r > 0\}$.
\end{Lemma}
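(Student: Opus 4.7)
The plan is to proceed by induction on $r$, using the two recursive identities of Lemma~\ref{bird} to reduce the inductive step to lower $r$, and handling the small values of $r$ as base cases by direct computation. Note that we have already established (via Lemma~\ref{l1}, Proposition~\ref{factorixatin}, and the outer induction on $s_{2,1}+s_{1,2}$) that the elements $t_{1,1;0}^{(j)}, t_{1,1;1}^{(j)}$ and $t_{2,1;1}^{(j)}$ appearing in Lemma~\ref{bird} all lie in $W_\pi$, and that their commutators with any $x \in J$ lie in $\m_\chi U(\g)$. Moreover, any element known to lie in $W_\pi$ automatically preserves $\m_\chi U(\g)$ under multiplication from either side.

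\textbf{Inductive step ($r > 2$).} Apply the super-derivation $[x,-]$ to identity (\ref{lastone}). By the graded Jacobi identity and Leibniz rule, $[x,t_{1,2;1}^{(r)}]$ becomes a sum of terms each of which contains either a factor $[x,t_{1,1;0}^{(j)}]$ (with $j=1,2$) or a factor $[x,t_{1,2;1}^{(r-1)}]$, the outer factors being $t_{1,1;0}^{(j)}$ or $t_{1,2;1}^{(r-1)}$. The first kind of factor lies in $\m_\chi U(\g)$ by Lemma~\ref{l1}, while the second kind lies in $\m_\chi U(\g)$ by the inductive hypothesis; the outer factors belong to $W_\pi$, so each term lies in $\m_\chi U(\g)$. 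The argument for $t_{2,2;1}^{(r)}$ using identity (\ref{lasttwo}) is entirely analogous, appealing also to $[x,t_{1,2;1}^{(2)}]\in\m_\chi U(\g)$ (a base case), to $[x,t_{2,1;1}^{(r-1)}]\in \m_\chi U(\g)$ (Lemma~\ref{l1}), and to $[x,t_{1,1;1}^{(a)}]\in\m_\chi U(\g)$ (which follows from Lemma~\ref{l1} using Proposition~\ref{factorixatin} to express $t_{1,1;1}^{(a)}$ in terms of $t_{1,1;0}^{(r)}$'s).

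\textbf{Base cases.} The recursions require $r>2$, so one must separately establish $[x,t_{1,2;1}^{(2)}]\in\m_\chi U(\g)$ and $[x,t_{2,2;1}^{(r)}]\in\m_\chi U(\g)$ for $r=1,2$. The case $t_{2,2;1}^{(1)}$ is easy: unwinding (\ref{EQ:Tpidef}) gives $t_{2,2;1}^{(1)}=(-1)^{|2|}\sum_{\row(i)=2}(e_{i,i}+\tilde\rho(e_{i,i}))$, and a short computation shows that $[e_{b,c},t_{2,2;1}^{(1)}]$ and $[e_{b,b-1},t_{2,2;1}^{(1)}]$ reduce to scalar multiples of $e_{b,c}-\chi(e_{b,c})$ and $e_{b,b-1}-\chi(e_{b,b-1})$, respectively, up to a constant which vanishes by (\ref{rhodef}). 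For $t_{1,2;1}^{(2)}$ and $t_{2,2;1}^{(2)}$ we invoke Lemma~\ref{ind1}(iii)--(iv) with $r=2$ to rewrite each element as $\theta(\dot t_{1,2;1}^{(2)})+\theta(\dot t_{1,2;1}^{(1)})S_{\tilde\rho}(\bar e_{b,b})-[\theta(\dot t_{1,2;1}^{(1)}),e_{b-1,b}]$ (resp.\ the analogous expression for $t_{2,2;1}^{(2)}$). The outer induction provides $\dot t_{1,2;1}^{(1)},\dot t_{1,2;1}^{(2)}\in W_{\dot\pi}$. One then computes $[x,\cdot]$ term by term for each $x\in J$, following precisely the template of Lemma~\ref{l2}: the contributions from the three pieces combine to produce coefficients of the form $(\tilde\rho|\eps_b)+\text{constant}$, which vanish thanks to the definition (\ref{rhodef}) of $\tilde\rho$.

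\textbf{Main obstacle.} The genuinely new work is the verification of the base case for $t_{1,2;1}^{(2)}$ with $x=e_{b,c}$. Lemma~\ref{l2} only needs $e_{b,b-1}$, whose action preserves the second row; by contrast $e_{b,c}$ mixes the two rows, so when one moves $e_{b,c}$ past the various $\bar e_{i,j}$'s appearing in $t_{1,2;1}^{(2)}$ additional terms coming from the odd-row box $c$ must be tracked, with signs governed by the factor $(-1)^{\#\{a\,:\,\row(j_a)\leq\varsigma\}}$ in (\ref{EQ:Tpidef}) and by the parity conventions in $\bar e_{i,j}=(-1)^{|\row(i)|}e_{i,j}$. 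The bookkeeping is delicate, but the final cancellation is again dictated by (\ref{rhodef}); this is the only essentially new ingredient beyond the pattern already used in Lemma~\ref{l2} and Lemma~\ref{bird}.
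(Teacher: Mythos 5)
Your proposal follows essentially the same route as the paper's proof: induction on $r$, with the base cases $r \leq 2$ checked directly from the definitions, and the inductive step obtained by applying $[x,-]$ to the identities (\ref{lastone})--(\ref{lasttwo}) of Lemma~\ref{bird}, using Lemma~\ref{l1} and the induction hypothesis (together with the fact that the elements already known to lie in $W_\pi$ preserve $\m_\chi U(\g)$). The additional detail you give---the Jacobi/Leibniz bookkeeping in the inductive step and the suggested reduction of the $r=2$ base cases via Lemma~\ref{ind1}---is consistent with, and somewhat more explicit than, the paper's terse treatment.
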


\begin{proof}
Proceed by induction on $r$.
The base cases when $r \leq 2$ are small enough that they can be checked
directly from the definitions.
Then for $r > 2$ use Lemma~\ref{bird}, noting by the induction hypothesis and
Lemma~\ref{l1} that all the terms on the right hand side of
(\ref{lastone})--(\ref{lasttwo}) are already known to
lie in $\m_\chi U(\g)$.
\end{proof}

We have now verified the induction step in the case that
$s_{1,2} \geq s_{2,1}$. It remains to establish the induction step when
$s_{2,1} > s_{1,2}$.
The strategy for this is sufficiently similar to
case just done (based on removing columns from the left of the
pyramid $\pi$) that we leave
the details to the reader.
We just note one minor difference: in the proof of the analogue of Lemma~\ref{ind1}
it is no longer the case that
$\theta \circ S_{\dot{\tilde\rho}} = S_{\tilde\rho}\circ \theta$, but
this can be fixed by allowing the choice of $\tilde\rho$
to change by a multiple of $\eps_1+\cdots+\eps_m-\eps_{m+1}-\cdots-\eps_{m+n}$.

This completes the proof of Proposition~\ref{invariant}.

\section{Triangular decomposition}

Let $W_\pi$ be the principal $W$-algebra
in $\mathfrak{g} = \mathfrak{gl}_{m|n}(\C)$
associated to
pyramid $\pi$.
We adopt all the notation
from $\S$\ref{spyr}.
So:
\begin{itemize}
\item
$(|1|,|2|)$ is a parity sequence chosen so that
$(|1|,|2|) = (\0,\1)$ if $m < n$
and $(|1|,|2|) = (\1,\0)$ if $m > n$;
\item
$\pi$ has $k = \min(m,n)$ boxes in its first row and $l =
\max(m,n)$
boxes in its second row;
\item
$\sigma = (s_{i,j})_{1 \leq i,j \leq 2}$ is a shift matrix compatible with
$\pi$.
\end{itemize}
We identify $W_\pi$ with $Y_\sigma^{l}$,
the shifted Yangian of level $l$,
via the isomorphism $\mu$ from (\ref{mud}).
Thus we have available a set of
Drinfeld generators for $W_\pi$
satisfying the relations from Theorem~\ref{T:gens}
plus the additional truncation relations
$d_1^{(r)} = 0$ for $r > k$.
In view of (\ref{maini})--(\ref{mainj}) and (\ref{EQ:Tpidef}),
we even have available explicit formulae for these generators
as elements of $U(\p)$, although we seldom need to use these (but see
the proof of Lemma~\ref{ishw} below).

By the relations, $W_\pi$ admits a
$\Z$-grading
$$
W_\pi = \bigoplus_{g \in \Z} W_{\pi;g}
$$
such that the generators $d_i^{(r)}$ are of degree $0$,
the generators $e^{(r)}$ are of degree $1$, and the generators
$f^{(r)}$ are of degree $-1$.
Moreover the PBW theorem (Corollary~\ref{pbwt})
implies that $W_{\pi;g} = 0$ for $|g|>k$.

More surprisingly, the algebra $W_{\pi}$ admits a
triangular decomposition.
To introduce this,
let $W_{\pi}^0$ (resp.\ $W_{\pi}^+$, resp.\ $W_{\pi}^-$)
be the subalgebra of $W_{\pi}$ generated by the elements
$\Omega_0 \coloneqq \{d_1^{(r)}, d_2^{(s)}\:|\:0 < r \leq k, 0 < s \leq l\}$
(resp.\ $\Omega_+ \coloneqq \{e^{(r)}\:|\:s_{1,2} < r \leq s_{1,2}+k\}$,
resp.\ $\Omega_- \coloneqq \{f^{(r)}\:|\:s_{2,1} < r \leq s_{2,1}+k\}$).
Let $W_{\pi}^\sharp$ (resp.\ $W_{\pi}^{\flat}$)
be the subalgebra of $W_{\pi}$ generated by $\Omega_0 \cup \Omega_+$
(resp.\ by $\Omega_- \cup \Omega_0$).
We warn the reader that the elements
$e^{(r)}\:(r > s_{1,2}+k)$
do not necessarily lie in $W_{\pi}^+$
(but they do
lie in $W_{\pi}^\sharp$ by (\ref{van1})).
Similarly the elements $f^{(r)}\:(r > s_{2,1}+k)$
do not necessarily lie in $W_{\pi}^-$
(but they do lie in $W_{\pi}^\flat$),
and the elements
$d_2^{(r)}\:(r > l)$ do not necessarily lie in any of
$W_{\pi}^0, W_{\pi}^\sharp$ or $W_{\pi}^\flat$.

\begin{Theorem}\label{triangular}
The algebras $W_{\pi}^0, W_{\pi}^+$ and $W_{\pi}^-$
are free supercommutative superalgebras
on
generators
$\Omega_0$, $\Omega_+$ and $\Omega_-$, respectively.
Multiplication defines vector space isomorphisms
\begin{align*}
W_{\pi}^- \otimes W_{\pi}^0
\otimes &W_{\pi}^+ \stackrel{\sim}{\rightarrow} W_{\pi},\\
W_{\pi}^0
\otimes W_{\pi}^+ \stackrel{\sim}{\rightarrow} W_{\pi}^\sharp,\qquad
 &W_{\pi}^- \otimes W_{\pi}^0
\stackrel{\sim}{\rightarrow} W_{\pi}^\flat.
\end{align*}
Moreover, there are unique surjective homomorphisms
$$
W_{\pi}^\sharp \twoheadrightarrow W_{\pi}^0,
\qquad
W_{\pi}^\flat \twoheadrightarrow W_{\pi}^0
$$
sending
$e^{(r)} \mapsto 0$ for all $r > s_{1,2}$ or
$f^{(r)} \mapsto 0$ for all $r > s_{2,1}$, respectively,
such that the restriction of these maps
to the subalgebra $W_{\pi}^0$ is the identity.
\end{Theorem}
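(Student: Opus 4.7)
My plan is to combine the defining relations of Theorem~\ref{T:gens} with the PBW basis of Corollary~\ref{pbwt}, and then exploit the $\Z$-grading of $W_\pi$ recalled just before the theorem.

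First I would verify the free supercommutative structure of $W_\pi^0$, $W_\pi^+$ and $W_\pi^-$. The relations $[d_i^{(r)}, d_j^{(s)}] = 0$, $[e^{(r)}, e^{(s)}] = 0$ (which in the super sense also forces $(e^{(r)})^2 = 0$) and $[f^{(r)}, f^{(s)}] = 0$ show that these three subalgebras are supercommutative quotients of the polynomial algebra on $\Omega_0$, the exterior algebra on $\Omega_+$ and the exterior algebra on $\Omega_-$ respectively. For a suitable choice of ordering, the ordered supermonomials in each of $\Omega_0, \Omega_+, \Omega_-$ form a subset of the PBW basis of Corollary~\ref{pbwt}, hence are linearly independent in $W_\pi$ and \emph{a fortiori} in each subalgebra. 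This forces the three quotient maps to be isomorphisms. For the multiplication isomorphism $W_\pi^- \otimes W_\pi^0 \otimes W_\pi^+ \stackrel{\sim}{\rightarrow} W_\pi$, I would fix an ordering on $\Omega_- \cup \Omega_0 \cup \Omega_+$ that places the $\Omega_-$-block before the $\Omega_0$-block before the $\Omega_+$-block; every ordered supermonomial then factorises as a product of such monomials, so multiplication sends the tensor-product basis bijectively onto the PBW basis of $W_\pi$.

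For $W_\pi^0 \otimes W_\pi^+ \stackrel{\sim}{\rightarrow} W_\pi^\sharp$, the real content is the set-theoretic equality $W_\pi^\sharp = W_\pi^0 \cdot W_\pi^+$. The inclusion $W_\pi^0 \cdot W_\pi^+ \subseteq W_\pi^\sharp$ is tautological. For the reverse inclusion I would show that $W_\pi^0 \cdot W_\pi^+$ is closed under multiplication by each generator in $\Omega_0 \cup \Omega_+$; the delicate case is moving a $d_i^{(r)}$ past an $e^{(s)}$, where the commutator from Theorem~\ref{T:gens} produces a sum of terms $d_i^{(a)} \cdot e^{(t)}$ with $t$ possibly exceeding $s_{1,2}+k$. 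For such $t$ I would solve relation (\ref{van1}) of Proposition~\ref{L:hzero} recursively for $e^{(t)}$ to express it explicitly as a polynomial in $\Omega_0 \cup \Omega_+$, and hence as an element of $W_\pi^0 \cdot W_\pi^+$. Injectivity of the multiplication map then follows from Corollary~\ref{pbwt} using the ordering that places $\Omega_0$ before $\Omega_+$. The $W_\pi^\flat$ statement is entirely symmetric, using (\ref{van2}) in place of (\ref{van1}).

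Finally, the surjective homomorphism $W_\pi^\sharp \twoheadrightarrow W_\pi^0$ comes out cleanly from the $\Z$-grading on $W_\pi$ in which $d$'s have degree $0$ and $e$'s degree $1$: the subalgebra $W_\pi^\sharp$ is graded and concentrated in non-negative degrees, its degree-zero component is exactly $W_\pi^0$, and the elements of strictly positive degree form a two-sided graded ideal, so the quotient projection gives the desired map, which kills each $e^{(r)}$ and restricts to the identity on $W_\pi^0$. The analogous argument in non-positive degrees handles $W_\pi^\flat$; uniqueness is automatic because $W_\pi^\sharp$ and $W_\pi^\flat$ are generated by the respective sets on which the map is prescribed. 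The main obstacle in the whole plan is the third paragraph: without the vanishing relations (\ref{van1})--(\ref{van2}) supplied by Proposition~\ref{L:hzero}, commutator expansions would leak outside $\Omega_0 \cup \Omega_+$ and there would be no reason for $W_\pi^\sharp$ to coincide with $W_\pi^0 \cdot W_\pi^+$.
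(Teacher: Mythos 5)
Your proposal is correct and follows essentially the same route as the paper: supercommutativity relations plus the PBW basis of Corollary~\ref{pbwt} (with $\Omega_-$ ordered before $\Omega_0$ before $\Omega_+$) give the free structures and the triangular factorizations, the vanishing relation (\ref{van1}) (resp.\ (\ref{van2})) together with the $[d_i^{(r)},e^{(s)}]$ (resp.\ $[d_i^{(r)},f^{(s)}]$) relations and an induction give $W_\pi^\sharp = W_\pi^0\cdot W_\pi^+$ and $W_\pi^\flat = W_\pi^-\cdot W_\pi^0$, and the $\Z$-grading argument (positive-degree part is an ideal of $W_\pi^\sharp$, quotient identified with $W_\pi^0$) produces the surjections exactly as in the paper. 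The only cosmetic difference is that the paper phrases the spanning step as an induction on Kazhdan degree rather than closure under multiplication by generators, which amounts to the same straightening argument.
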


\begin{proof}
Throughout the proof,
we repeatedly apply the PBW theorem (Corollary~\ref{pbwt}),
choosing the order of generators so that $\Omega_- <
\Omega_0 < \Omega_+$.

To start with, note by the left hand relations in Theorem~\ref{T:gens}
that each of $W_{\pi}^0, W_{\pi}^+$ and $W_{\pi}^-$ is
supercommutative.
Combined with the PBW theorem, we deduce that they are free
supercommutative on the given generators.
Moreover the PBW theorem implies that the
multiplication map
$W_{\pi}^- \otimes W_{\pi}^0 \otimes W_{\pi}^+
\rightarrow W_{\pi}$ is a vector space isomorphism.

Next we observe that $W_{\pi}^\sharp$ contains all the elements
$e^{(r)}\:(r > s_{1,2})$. This follows from (\ref{van1}) by induction
on $r$.
Moreover it is spanned as a vector space by
the ordered supermonomials in the generators
$\Omega_0 \cup \Omega_+$.
This follows from (\ref{van1}), the relation for $[d_i^{(r)}, e^{(s)}]$ in
Theorem~\ref{T:gens}, and induction on Kazhdan degree.
Hence the multiplication map
$W_{\pi}^0 \otimes W_{\pi}^+ \rightarrow W_{\pi}^\sharp$ is
surjective.
It is injective by the PBW theorem, so it is an isomorphism.
Similarly $W_{\pi}^- \otimes W_{\pi}^0 \rightarrow W_{\pi}^\flat$
is an isomorphism.

Finally, let $J^\sharp$ be the two-sided ideal of $W_{\pi}^\sharp$
that is the
sum of all of the graded components $W_{\pi;g}^\sharp \coloneqq W_{\pi}^\sharp
\cap W_{\pi;g}$
for $g > 0$.
By the PBW theorem, the natural quotient map
$W_{\pi}^0 \rightarrow
W_{\pi}^\sharp / J^\sharp$ is an isomorphism.
Hence there is a surjection $W_{\pi}^\sharp \twoheadrightarrow
W_{\pi}^0$ as in the statement of the theorem.
A similar argument
yields the desired surjection
$W_{\pi}^\flat \twoheadrightarrow
W_{\pi}^0$.
\end{proof}

\section{Irreducible representations}

Continue with the notation of the previous section.
Using the triangular decomposition, we can classify irreducible
$W_{\pi}$-modules by highest weight theory.
Define a {\em $\pi$-tableau} to be a filling of the boxes of
the pyramid $\pi$ by
arbitrary complex numbers.
Let $\Tab_\pi$ denote the set of all such $\pi$-tableaux.
We represent the $\pi$-tableau with entries $a_1,\dots,a_k$ along its first
row and $b_1,\dots,b_{l}$ along its second row simply by the array
$\substack{a_1 \cdots a_{k} \\ b_1 \cdots b_{l}}$.
We say that $A, B\in \Tab_\pi$ are {\em row equivalent},
denoted $A \sim B$, if $B$ can be obtained from $A$ by permuting
entries within each row.

Recall from  Theorem~\ref{triangular} that
$W_\pi^0$ is the
polynomial algebra on $\{d_1^{(r)},d_2^{(s)}\:|\:0 < r \leq k, 0 < s \leq l\}$.
For
$A = \substack{a_1 \cdots a_{k} \\ b_1 \cdots b_{l}} \in \Tab_\pi$, let $\C_A$ be the one-dimensional
$W_{\pi}^0$-module on basis $1_A$
such that
\begin{align}\label{type1}
u^{k} d_1(u) 1_A &= (u+a_1) \cdots (u+a_k) 1_A,\\
u^{l} d_2(u) 1_A &= (u+b_1) \cdots (u+b_l) 1_A.\label{type2}
\end{align}
Thus $d_1^{(r)} 1_A = e_r(a_1,\dots,a_k) 1_A$
and $d_2^{(r)} 1_A = e_r(b_1,\dots,b_l) 1_A$,
where $e_r$ denotes the $r$th elementary symmetric polynomial.
Every irreducible $W_{\pi}^0$-module is isomorphic
to $\C_A$ for some $A \in \Tab_\pi$, and $\C_A \cong \C_B$ if and only if $A \sim B$.

Given $A \in \Tab_\pi$, we view $\C_A$ as a
$W_{\pi}^\sharp$-module via the surjection
$W_{\pi}^\sharp \twoheadrightarrow W_{\pi}^0$ from
Theorem~\ref{triangular}, i.e.\ $e^{(r)} 1_A = 0$ for all $r > s_{1,2}$.
Then we induce to form the {\em Verma module}
\begin{equation} \label{e:WVerma}
\overline{M}(A) \coloneqq W_{\pi} \otimes_{W_{\pi}^\sharp} \C_A.
\end{equation}
Sometimes we need to view this as a supermodule, which we do by
declaring that its cyclic generator $1 \otimes
1_A$ is even.
By Theorem~\ref{triangular},
$W_{\pi}$ is a free right $W_{\pi}^\sharp$-module with basis given by
the
ordered supermonomials in the odd elements $\{f^{(r)}\:|\:s_{2,1}<r \leq s_{2,1}+k\}$.
Hence $\overline{M}(A)$ has basis given by the vectors $x \otimes 1_A$
as $x$ runs over this set of supermonomials.
In particular $\dim \overline{M}(A) = 2^k$.

The following lemma shows that $\overline{M}(A)$ has a unique
irreducible quotient
which we denote by $\overline{L}(A)$;
we write $v_+$ for the image of $1 \otimes 1_A \in \overline{M}(A)$
in $\overline{L}(A)$.

\begin{Lemma}\label{irrdesc}
For
$A = \substack{a_1 \cdots a_{k} \\ b_1 \cdots b_{l}} \in \Tab_\pi$,
the Verma module $\overline{M}(A)$ has a unique
irreducible quotient $\overline{L}(A)$.
The image $v_+$ of $1 \otimes 1_A$ is the unique (up to scalars)
non-zero vector in $\overline{L}(A)$ such that
$e^{(r)} v_+ = 0$ for all $r > s_{1,2}$.
Moreover we have that
$
d_1^{(r)} v_+ = e_r(a_1,\dots,a_k) v_+$ and
$d_2^{(r)} v_+ = e_r(b_1,\dots,b_l) v_+$
for all $r \geq 0$.
\end{Lemma}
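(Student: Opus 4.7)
The plan is to exploit the triangular decomposition of Theorem~\ref{triangular} together with the $\Z$-grading $W_\pi = \bigoplus_g W_{\pi;g}$ in which $e^{(r)}$, $f^{(r)}$ and $d_i^{(r)}$ have degrees $+1$, $-1$ and $0$ respectively. First, I would induce a grading on $\overline{M}(A)$ by placing $1\otimes 1_A$ in degree zero; the PBW basis of $W_\pi^-$ shows $\overline{M}(A)=\bigoplus_{g=-k}^{0}\overline{M}(A)_g$ with $\overline{M}(A)_0 = \C(1\otimes 1_A)$ one-dimensional. By the relations of Theorem~\ref{T:gens} with $r=1$, the element $H\coloneqq d_1^{(1)}+d_2^{(1)}\in W_\pi^0$ satisfies $[H,e^{(s)}]=2(-1)^{|1|}e^{(s)}$ and $[H,f^{(s)}]=-2(-1)^{|1|}f^{(s)}$, so it acts semisimply on $\overline{M}(A)$ with the graded components as eigenspaces attached to pairwise distinct eigenvalues of the form $c_0+2(-1)^{|1|}g$. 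Consequently every $W_\pi$-submodule of $\overline{M}(A)$ is graded. A proper submodule must therefore meet $\overline{M}(A)_0$ trivially (otherwise it would contain the cyclic generator), so the sum $N_{\max}$ of all proper submodules is itself proper. This gives the unique maximal proper submodule, and $\overline{L}(A)\coloneqq \overline{M}(A)/N_{\max}$ is the unique irreducible quotient.

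Second, I would establish $e^{(r)}v_+=0$ for all $r>s_{1,2}$. For $s_{1,2}<r\leq s_{1,2}+k$ the element $e^{(r)}\in\Omega_+\subset W_\pi^\sharp$ acts as zero on $\C_A$ via the surjection $W_\pi^\sharp\twoheadrightarrow W_\pi^0$ of Theorem~\ref{triangular}, so $e^{(r)}(1\otimes 1_A)=0$ already in $\overline{M}(A)$. For $r>s_{1,2}+k$, relation~\eqref{van1} of Proposition~\ref{L:hzero}, combined with $d_1^{(0)}=1$ and the truncation $d_1^{(s)}=0$ for $s>k$, rewrites $e^{(r)}$ as a $W_\pi^0$-linear combination of $e^{(a)}$'s with $s_{1,2}<a<r$; an induction on $r$ finishes. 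The eigenvalue formulae for $d_i^{(r)}$ then follow: for $0\leq r\leq k$ directly from~\eqref{type1}--\eqref{type2}, for $d_1^{(r)}$ with $r>k$ from the truncation $d_1^{(r)}=0$ in $Y_\sigma^l$, and for $d_2^{(r)}$ with $r>l$ by substituting $e^{(a)}v_+=0$ into~\eqref{van3} to obtain $d_2^{(r)}v_+=0=e_r(b_1,\dots,b_l)v_+$.

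Third, for the uniqueness of $v_+$: let $0\neq w\in\overline{L}(A)$ be annihilated by every $e^{(r)}$ with $r>s_{1,2}$. Because these generators have degree $+1$, each homogeneous component of $w$ is similarly annihilated; so I may assume $w$ is homogeneous of some degree $g\leq 0$. If $g=0$ then $w\in\overline{L}(A)_0=\C v_+$, giving the claim. If instead $g<0$, the triangular decomposition $W_\pi = W_\pi^- W_\pi^0 W_\pi^+$ together with the fact that $W_\pi^+$ is the exterior algebra on the odd generators $\Omega_+$ (each killing $w$) yields $W_\pi^+\cdot w=\C w$, whence $W_\pi\cdot w=W_\pi^- W_\pi^0\cdot w\subseteq\bigoplus_{g'\leq g}\overline{L}(A)_{g'}$. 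This is a nonzero proper submodule, contradicting irreducibility; so $g=0$ as required.

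The main obstacle I anticipate is producing the ad-semisimple grading element $H$ and verifying that its eigenvalues separate the graded components of $\overline{M}(A)$: this is what forces every $W_\pi$-submodule to be graded, without which neither the uniqueness of $N_{\max}$ nor the uniqueness of $v_+$ would go through so cleanly. Once the grading argument is in place, the remaining steps reduce to the explicit relations~\eqref{van1} and~\eqref{van3} of Proposition~\ref{L:hzero} and routine inductions.
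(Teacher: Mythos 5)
Your proof is correct and follows essentially the same route as the paper: the paper uses the eigenspace decomposition of $\overline{M}(A)$ with respect to $d=(-1)^{|1|}d_1^{(1)}$ (whose commutator with each $f^{(r)}$ is $-f^{(r)}$) to get a one-dimensional top piece generating the module, then the standard highest weight argument for both the unique irreducible quotient and the uniqueness of $v_+$, exactly as you do with your grading element $H=d_1^{(1)}+d_2^{(1)}$. Your extra verifications via (\ref{van1}) and (\ref{van3}) of the action of $e^{(r)}$ for $r>s_{1,2}+k$ and of $d_2^{(r)}$ for $r>l$ are correct details that the paper leaves implicit.
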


\begin{proof}
Let $\lambda \coloneqq (-1)^{|1|}(a_1+\cdots+a_k)$.
For any $\mu \in \C$,
let $\overline{M}(A)_\mu$ be the $\mu$-eigenspace
of the endomorphism of $\overline{M}(A)$
defined by $d \coloneqq (-1)^{|1|} d_1^{(1)} \in W_\pi$. Note
by (\ref{type1}) and the relations that
$d 1_A = \lambda 1_A$ and $[d,f^{(r)}] =
-f^{(r)}$ for each $r > s_{2,1}$.
Using the PBW basis for $\overline{M}(A)$, it follows that
\begin{equation}\label{espace}
\overline{M}(A) =
\bigoplus_{i=0}^k \overline{M}(A)_{\lambda-i}
\end{equation}
and $\dim \overline{M}(A)_{\lambda-i} = \binom{k}{i}$ for each $0 \leq
i \leq k$.
In particular, $\overline{M}(A)_\lambda$ is one-dimensional, and it
generates $\overline{M}(A)$ as a $W_\pi^\flat$-module.
This is all that is needed to deduce that $\overline{M}(A)$ has a unique irreducible quotient
$\overline{L}(A)$ following
the standard argument of highest weight theory.

The vector $v_+$ is a non-zero vector
annihilated by $e^{(r)}\:(r > s_{1,2})$, and
$d_1^{(r)} v_+$ and $d_2^{(r)} v_+$ are as stated
thanks to (\ref{type1})--(\ref{type2}).
It just remains to show that any vector $v \in \overline{L}(A)$ annihilated by
all
$e^{(r)}$ is a multiple of $v_+$.
The decomposition (\ref{espace}) induces an analogous decomposition
\begin{equation}
\overline{L}(A) = \bigoplus_{i=0}^k \overline{L}(A)_{\lambda-i},
\end{equation}
although for $0 < i \le k$
the eigenspace $\overline{L}(A)_{\lambda-i}$
may now be zero.
Write $v = \sum_{i=0}^k v_i$ with
$v_i \in \overline{L}(\lambda)_{\lambda-i}$.
Then we need to show that $v_i = 0$ for $i > 0$.
We have that $e^{(r)} v = \sum_{i=1}^k e^{(r)} v_i = 0$,
hence $e^{(r)} v_i = 0$ for each $i$.
But this means for $i > 0$ that the submodule $W_\pi v_i =
W_\pi^\flat v_i$
has trivial intersection with $\overline{L}(\lambda)_\lambda$,
hence it must be zero.
\end{proof}

Here is the classification of irreducible $W_\pi$-modules.

\begin{Theorem}\label{irrclass}
Every irreducible $W_{\pi}$-module is finite dimensional and is
isomorphic to one of the modules
$\overline{L}(A)$ from
Lemma~\ref{irrdesc} for some $A \in \Tab_\pi$.
Moreover $\overline{L}(A) \cong \overline{L}(B)$ if and only if $A
\sim B$.
Hence, fixing a set $\Tab_\pi / {\scriptstyle\sim}$ of representatives for the
$\sim$-equivalence classes in $\Tab_\pi$,
the modules
$$
\{\overline{L}(A)\:|\:A \in \Tab_\pi / {\scriptstyle\sim}\}
$$
give a complete set of pairwise inequivalent irreducible $W_\pi$-modules.
\end{Theorem}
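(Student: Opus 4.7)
The plan is to classify the irreducibles via highest weight theory, leveraging the triangular decomposition of Theorem~\ref{triangular}, and then identify $A$ up to row equivalence from the $W_\pi^0$-eigenvalues on a highest weight vector as in Lemma~\ref{irrdesc}. Let $V$ be an irreducible $W_\pi$-module; since $V$ is cyclic over the countably generated algebra $W_\pi$, $V$ has countable dimension. Define
$$V^+ := \{v \in V : e^{(r)} v = 0 \text{ for all } r > s_{1,2}\}.$$
First I would show $V^+ \neq 0$. By Theorem~\ref{triangular}, $W_\pi^+$ is a $2^k$-dimensional exterior superalgebra whose augmentation ideal $I^+$ is generated by $\{e^{(r)} : s_{1,2} < r \leq s_{1,2}+k\}$ and is nilpotent. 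For any nonzero $v \in V$, the subspace $W_\pi^+ v$ is finite-dimensional, and the nilpotency of $I^+$ forces a nonzero vector $v' \in W_\pi^+ v$ killed by all generators of $I^+$; the relation (\ref{van1}) from Proposition~\ref{L:hzero} (valid in $W_\pi$ by Theorem~\ref{truncated}) and induction on $r$ then show that $v'$ is killed by every $e^{(r)}$ with $r > s_{1,2}$, so $v' \in V^+$. Moreover $V^+$ is $W_\pi^0$-stable: for $v \in V^+$ and $s > s_{1,2}$, the relation $[d_i^{(r)}, e^{(s)}] = (-1)^{|1|}\sum_{a=0}^{r-1} d_i^{(a)} e^{(r+s-1-a)}$ in Theorem~\ref{T:gens} is a sum of terms each ending in $e^{(t)}$ with $t \geq s > s_{1,2}$, hence annihilates $v$.

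The central task is to find a common $W_\pi^0$-eigenvector in $V^+$. Pick $v_0 \in V^+$ nonzero. By irreducibility $V = W_\pi v_0$, and using $W_\pi = W_\pi^- W_\pi^0 W_\pi^+$ with $W_\pi^+ v_0 = \C v_0$ we get $V = W_\pi^-(W_\pi^0 v_0)$, yielding in particular $\dim V \leq 2^k \cdot \dim(W_\pi^0 v_0)$. I claim $V^+$ is simple as a $W_\pi^0$-module. Indeed, if $W \subseteq V^+$ is any nonzero $W_\pi^0$-submodule, then $W_\pi \cdot W = W_\pi^- W$ (since $W_\pi^+$ kills $W$) is a nonzero $W_\pi$-submodule of $V$, hence equals $V$; now using PBW ordering in $W_\pi^-$ together with the relations $[e^{(r)}, f^{(s)}] \in W_\pi^0$ and $e^{(r)}W = 0$, one verifies by induction on the number of $f$-factors that any element of $W_\pi^- W$ which is annihilated by all $e^{(r)}\,(r > s_{1,2})$ must lie in $W$; this shows $V^+ = W_\pi^- W \cap V^+ = W$. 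Since $V^+$ is a simple countable-dimensional module over the commutative algebra $W_\pi^0$, Dixmier's version of Schur's lemma forces $\mathrm{End}_{W_\pi^0}(V^+) = \C$, and by commutativity every element of $W_\pi^0$ acts on $V^+$ as a scalar. Hence $V^+ = \C v_+$ is one-dimensional and $v_+$ is a common $W_\pi^0$-eigenvector.

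Given $v_+$, the scalars by which $d_1^{(r)}$ and $d_2^{(r)}$ act produce an $A = \substack{a_1 \cdots a_k \\ b_1 \cdots b_l} \in \Tab_\pi$ via the fundamental theorem of symmetric functions (applied to the monic polynomials $u^k d_1(u) v_+$ and $u^l d_2(u) v_+$, using the truncation $d_1^{(r)} = 0$ for $r > k$). The assignment $1 \otimes 1_A \mapsto v_+$ extends by the universal property of $\overline{M}(A)$ to a surjection $\overline{M}(A) \twoheadrightarrow V$, whence $V \cong \overline{L}(A)$ and $\dim V \leq \dim\overline{M}(A) = 2^k$, so $V$ is finite-dimensional. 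For the equivalence $\overline{L}(A) \cong \overline{L}(B) \Leftrightarrow A \sim B$: if the two irreducibles are isomorphic then Lemma~\ref{irrdesc}'s uniqueness of $v_+$ forces the eigenvalue tuples to match, and hence the multisets $\{a_i\}$ and $\{b_j\}$ agree, i.e.\ $A \sim B$; conversely, $A \sim B$ implies $\C_A \cong \C_B$ as $W_\pi^\sharp$-modules, giving $\overline{M}(A) \cong \overline{M}(B)$ and so $\overline{L}(A) \cong \overline{L}(B)$. The main obstacle is the simplicity of $V^+$ as a $W_\pi^0$-module, specifically the PBW bookkeeping needed to show that an element $\sum_I f_I w_I \in W_\pi^- W$ killed by all $e^{(r)}$ must satisfy $w_I = 0$ for $I \neq \emptyset$.
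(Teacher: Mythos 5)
Most of your proposal runs parallel to the paper's own argument and those parts are fine: the nonvanishing of $V^+$ via nilpotency of the augmentation ideal of $W_{\pi}^+$ together with (\ref{van1}), the $W_{\pi}^0$-stability of $V^+$, the extraction of $A\in\Tab_\pi$ from the eigenvalues, the surjection $\overline{M}(A)\twoheadrightarrow V$, and the criterion $\overline{L}(A)\cong\overline{L}(B)\Leftrightarrow A\sim B$ via Lemma~\ref{irrdesc}. The genuine gap is exactly the step you flag as the main obstacle: the claim that every element of $W_{\pi}^-W$ annihilated by all $e^{(r)}$ ($r>s_{1,2}$) lies in $W$, which you propose to prove by a PBW induction on the number of $f$-factors using only $e^{(r)}W=0$ and the fact that $[e^{(r)},f^{(s)}]$ lies in (or acts through) $W_{\pi}^0$. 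No such purely formal computation can work, because those hypotheses hold verbatim in the Verma module $\overline{M}(A)$ with $W=\C 1_A$, where the conclusion is false whenever the two rows of $A$ share an entry: for instance with $k=l=1$ and $a_1=b_1$ one has $[e^{(r)},f^{(1)}]=(-1)^{|1|}\tilde c^{(r)}$ (Remark~\ref{center}), and $\tilde c^{(r)}1_A=e_r(b_1/a_1)1_A=0$ for all $r\geq 1$, so $f^{(1)}1_A$ is killed by every $e^{(r)}$ but does not lie in $\C 1_A$. Hence irreducibility of $V$ must be used beyond the single application $V=W_{\pi}^-W$, and your sketch does not indicate how; this is precisely the delicate point. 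The paper's proof gets around it by introducing the subspaces $L[i]=\{v\:|\:W_{\pi;g}v=0\text{ for }g>0\text{ or }g\leq -i\}$ cut out by the $\Z$-grading, choosing $i$ minimal with $L[i]\neq 0$, and showing that a proper $W_{\pi}^0$-submodule $L''$ of $W_{\pi}^0v$ would (using $L=W_{\pi}L''$, which is where irreducibility enters) produce a nonzero vector in $L[i-1]$, a contradiction; it never needs the whole singular space to be one-dimensional at this stage.

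Two secondary points. First, the assertion $[e^{(r)},f^{(s)}]\in W_{\pi}^0$ is not accurate: $[e^{(r)},f^{(s)}]=(-1)^{|1|}\tilde c^{(r+s-1)}$ is central, but $\tilde c^{(t)}$ involves $d_2^{(t')}$ with $t'>l$, which the paper warns need not lie in $W_{\pi}^0$; it does act on $V^+$ through $W_{\pi}^0$ because (\ref{van3}) shows such $d_2^{(t')}$ kill any vector annihilated by all $e^{(a)}$ — but this repair does not close the main gap above. Second, the appeal to countable dimension and Dixmier's lemma is unnecessary: $W_{\pi}^0$ is a finitely generated commutative $\C$-algebra, so its irreducible modules are one-dimensional of the form $\C_A$ (this is what the paper uses). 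If you replace your simplicity claim for $V^+$ by an argument of the paper's type (or any correct argument that produces an irreducible $W_{\pi}^0$-submodule of the singular vectors), the rest of your proof goes through.
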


\begin{proof}
We note to start with for $A, B \in \Tab_\pi$
that $\overline{L}(A) \cong \overline{L}(B)$ if and only if $A \sim
B$. This is clear from Lemma~\ref{irrdesc}.

Now take an arbitrary (conceivably infinite dimensional)
irreducible $W_{\pi}$-module $L$.
We want to show that $L \cong \overline{L}(A)$ for some $A \in \Tab_\pi$.
For $i \geq 0$, let
$$
L[i] \coloneqq
\left\{v \in L \:\big|\:
W_{\pi;g} v = \{0\} \text{ if }g > 0\text{ or }g \leq -i\right\}.
$$
We claim initially that $L[k+1] \neq \{0\}$.
To see this, recall that $W_{\pi;g} = \{0\}$ for $g \leq -k-1$,
so by the PBW theorem $L[k+1]$ is simply the set of all vectors $v \in L$
such that $e^{(r)} v = 0$ for all $s_{1,2} < r \leq s_{1,2}+k$.
Now take any non-zero vector $v \in L$ such that
$\#\{r=s_{1,2}+1,\dots,s_{1,2}+k\:|\:e^{(r)} v = 0\}$ is maximal.
If $e^{(r)} v \neq 0$ for some $s_{1,2}<r\leq s_{1,2}+k$, we can replace $v$ by
$e^{(r)} v$ to get a non-zero vector annihilated by more $e^{(r)}$'s.
Hence $v \in L[k+1]$ by the maximality of the choice of $v$,
and we have shown that $L[k+1] \neq \{0\}$.

Since $L[k+1] \neq \{0\}$ it makes sense to define $i \geq 0$ to be
minimal such that $L[i] \neq \{0\}$.
Since $L[0] = \{0\}$, we actually have that $i > 0$.
Pick $0 \neq v \in L[i]$ and let $L' \coloneqq W_{\pi}^\sharp v$. Actually, by
the PBW theorem, we have that $L' = W_{\pi}^0 v$, and
$L' \subseteq L[i]$.
Suppose first that $L'$ is irreducible as a $W_{\pi}^0$-module.
Then $L' \cong \C_A$ for some $A \in \Tab_\pi$.
The inclusion $L' \hookrightarrow L$ induces a non-zero
$W_{\pi}$-module homomorphism
$$
\overline{M}(A) \cong W_{\pi} \otimes_{W_{\pi}^\sharp} L'
\rightarrow L,
$$
which is surjective as $L$ is irreducible.
Hence $L \cong \overline{L}(A)$.

It remains to rule out the possibility that $L'$ is reducible.
Suppose for a contradiction that $L'$ possesses a non-zero
proper $W_{\pi}^0$-submodule $L''$.
As  $L = W_{\pi} L''$
and $W_{\pi}^\sharp L'' = L''$,
the PBW theorem implies that we can write
$$
v =
w+\sum_{h = 1}^k
\sum_{s_{2,1} < r_1 < \cdots < r_{h} \leq s_{2,1}+k}
f^{(r_1)}
\cdots f^{(r_{h})}
v_{r_1,\dots,r_{h}}
$$
for some vectors $v_{r_1,\dots,r_{h}}, w \in L''$.
Then we have that
$$
0 \neq v - w
\in L[i]
\cap \left(\sum_{g \leq -1} W_{\pi;g} L[i]\right) \subseteq L[i-1].
$$
This shows $L[i-1] \neq \{0\}$, contradicting the minimality of the choice of $i$.
\end{proof}

The final theorem of the section gives an explicit monomial basis for
$\overline{L}(A)$.
We only prove linear independence here; the
spanning part of the argument will be given in the next section.

\begin{Theorem}\label{moved}
Suppose
$A = \substack{a_1 \cdots a_{k} \\ b_1 \cdots b_{l}} \in \Tab_\pi$.
Let
$h \geq 0$ be maximal such that there exist distinct
$1 \leq i_1,\dots,i_h \leq k$ and distinct
$1 \leq j_1,\dots,j_h \leq l$ with $a_{i_1}=b_{j_1},\dots,a_{i_h} =
b_{j_h}$.
Then the irreducible module
$\overline{L}(A)$ has basis given by the vectors
$x v_+$ as $x$ runs over all ordered supermonomials in the odd
elements
$\{f^{(r)}\:|\:s_{2,1} < r \leq s_{2,1}+k-h\}$.
\end{Theorem}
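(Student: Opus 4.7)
The plan is to realise $\overline{L}(A)$ explicitly as a tensor product of evaluation modules via the Miura isomorphism $W_\pi \cong Y_\sigma^l$, thereby pinning its dimension at $2^{k-h}$ and exhibiting the required basis simultaneously.

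First, using Theorem~\ref{irrclass} and row equivalence I may reorder $A$ so that the $h$ matching entries appear as aligned pairs $(a_i,b_i)$ for $i = 1, \dots, h$ in $h$ designated height-two columns of $\pi$, with the remaining entries arranged in the other columns. Using Theorem~\ref{T:main} to identify $W_\pi$ with $Y_\sigma^l$, I would build a $W_\pi$-module $V$ by pullback along $\ev_\sigma^l$ of a tensor product in $U_\sigma^l = U(\mathfrak{gl}_1)^{\otimes s_{2,1}} \otimes U(\mathfrak{gl}_{1|1})^{\otimes k} \otimes U(\mathfrak{gl}_1)^{\otimes s_{1,2}}$: namely, one-dimensional $\mathfrak{gl}_1$-modules on the height-one columns carrying the leftover scalars of $A$, and on each height-two column the irreducible $\mathfrak{gl}_{1|1}$-module whose highest weight $(\alpha,\beta)$ matches the column's pair of entries via the sign conventions of (\ref{eval}). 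The atypicality criterion $\alpha + \beta = 0$ for $\mathfrak{gl}_{1|1}$ translates under these signs to $a_i = b_j$, so matching columns contribute one-dimensional atypical factors and unmatched columns contribute two-dimensional typical factors, giving $\dim V = 2^{k-h}$.

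The technical heart is to verify two things about the tensor product $v := v_1 \otimes \cdots$ of highest weight vectors in the factors: first, that $v$ satisfies the defining properties of $v_+ \in \overline{L}(A)$ from Lemma~\ref{irrdesc}, namely $e^{(r)} v = 0$ for all $r > s_{1,2}$ together with the correct scalar actions of $d_1(u)$ and $d_2(u)$ matching (\ref{type1})--(\ref{type2}); and second, that $V$ is irreducible as a $W_\pi$-module. Both are checked by unwinding the iterated $\Delta_\sigma^l$ using Proposition~\ref{comultrem} and the definitions (\ref{dp})--(\ref{dm}) of $\Delta_\pm$, observing that many summands involving $e^{(s)}$ or $f^{(s)}$ applied to a highest weight tensor factor collapse to zero, and inducting on the number of tensor factors in parallel with the inductive framework used in the proof of Theorem~\ref{T:main}. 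Once these are established, $V \cong \overline{L}(A)$ by Lemma~\ref{irrdesc}.

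To read off the basis, I would compute the action of $f^{(r)}$ on $v$ for $s_{2,1} < r \le s_{2,1}+k-h$ via the comultiplication: after an invertible linear change of variables in $r$, these $k-h$ operators act as the $k-h$ independent lowering operators on the $k-h$ two-dimensional typical factors of $V$, while the remaining $f^{(r)}$ for $s_{2,1}+k-h < r \le s_{2,1}+k$ become redundant because the atypical factors are already annihilated. The $2^{k-h}$ ordered supermonomials in the former operators then produce the standard tensor basis of $V$ from $v$, giving the desired linear independence in $\overline{L}(A) \cong V$. The main obstacle is the parity-dependent, sign-delicate bookkeeping in the middle paragraph: tracking signs through the iterated non-cocommutative comultiplication of Proposition~\ref{comultrem} and verifying the combinatorial collapses, together with the supplementary argument needed to establish irreducibility of $V$ in the (not necessarily semisimple) category of $W_\pi$-modules.
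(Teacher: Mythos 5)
Your module $V$ is precisely the paper's standard module $\overline{V}(B)$ from its Section 8 (with $B\sim A$ chosen so that the $h$ matched pairs sit in columns of height two), and your verification that $v=v_+\otimes\cdots\otimes v_+$ is annihilated by the $e^{(r)}$ and carries the eigenvalues (\ref{type1})--(\ref{type2}) is exactly the content of Lemma~\ref{ishw}. The gap is that the two steps carrying the real weight of the theorem are asserted rather than proved. First, the irreducibility of $V$: this is not at all automatic (it is equivalent to Theorem~\ref{fint}, i.e.\ to the statement being proved), and the standard way to establish irreducibility of such a tensor product is itself a nonvanishing computation of the form ``raising after lowering returns a nonzero multiple of the highest weight vector,'' which is exactly the computation you have not supplied. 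Your proposed induction ``unwinding the iterated comultiplication'' does not by itself rule out proper submodules. Second, the claim that after ``an invertible linear change of variables in $r$'' the operators $f^{(r)}$, $s_{2,1}<r\leq s_{2,1}+k-h$, act as the independent lowering operators on the $k-h$ typical factors needs a genuine nondegeneracy argument: acting through $\mu=\Delta_{1,\dots,1}$, each $f^{(r)}$ hits $v$ as a linear combination of the single-factor lowering operators with coefficients built from the evaluation parameters, and the theorem allows repeated entries within each row of $A$ (only the cross-row inequalities $a_i\neq b_j$ hold for the unmatched entries), so a Vandermonde-type genericity argument is not available for free. Invertibility of that coefficient matrix is essentially equivalent in difficulty to the determinant evaluation the paper performs, so as written your proposal defers the essential content rather than supplying it.

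For comparison, the paper deliberately avoids needing irreducibility of $\overline{V}(B)$: Lemma~\ref{ishw} is used only to produce $\overline{L}(A)$ as a subquotient of $\overline{V}(B)$, giving the upper bound $\dim\overline{L}(A)\leq 2^{k-h}$ (which yields spanning), while the lower bound (linear independence) is obtained by a separate explicit computation inside $\overline{L}(A)$ itself: after reducing to showing $f^{(s_{2,1}+1)}\cdots f^{(s_{2,1}+\bar k)}v_+\neq 0$ with $\bar k=k-h$, one applies $e^{(s_{1,2}+1)}\cdots e^{(s_{1,2}+\bar k)}$, uses the centrality of the elements $\tilde c^{(r)}$ from Remark~\ref{center} to rewrite the result as $\pm\det\bigl(\tilde c^{(\bar l-i+j)}\bigr)_{1\leq i,j\leq\bar k}v_+$, and identifies this scalar as the supersymmetric Schur function $s_{(\bar k^{\bar l})}(b_1,\dots,b_{\bar l}/a_1,\dots,a_{\bar k})=\prod_{i,j}(b_i-a_j)$, which is nonzero precisely because of the maximality of $h$. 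If you want to salvage your route, you must either carry out an analogous nonvanishing computation (at which point you may as well do it in $\overline{L}(A)$ as the paper does), or give an independent proof of irreducibility of the tensor product valid for coincident within-row parameters; neither is a routine sign-bookkeeping exercise.
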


\begin{proof}
Let $\bar k \coloneqq k-h$ and $\bar l \coloneqq l-h$.
Since $\overline{L}(A)$ only depends on the $\sim$-equivalence class
of $A$, we can reindex to
assume that $a_{\bar k+1} = b_{\bar l+1}, a_{\bar k+2}=b_{\bar
  l+2},\dots,a_{k} = b_{l}$.
We proceed to show that the vectors $x v_+$
for all
ordered supermonomials
$x$ in $\{f^{(r)}\:|\:s_{2,1}< r \leq s_{2,1}+\bar k\}$
are linearly independent in $\overline{L}(A)$.
In fact it is enough for this to show just that
\begin{equation}\label{goal}
 f^{(s_{2,1}+1)} f^{(s_{2,1}+2)}\cdots f^{(s_{2,1}+\bar k)} v_+ \neq 0.
\end{equation}
Indeed, assuming (\ref{goal}), we can prove the linear independence in general
by taking any non-trivial linear
relation of the form
$$
\sum_{a = 0}^{\bar k}
\sum_{s_{2,1} < r_1 < \cdots < r_a \leq s_{2,1}+\bar k}
\lambda_{r_1,\dots,r_a}
f^{(r_1)} \cdots f^{(r_a)} v_+ = 0.
$$
Let $a$ be minimal such that $\lambda_{r_1,\dots,r_a} \neq 0$ for some
$r_1,\dots,r_a$.
Apply $f^{(s_1)}\cdots f^{(s_{\bar k-a})}$
where $s_{2,1} < s_1 < \cdots < s_{\bar k-a} \leq s_{2,1}+\bar k$ are
different from $r_1 < \cdots < r_a$.
All but one term of the summation becomes zero and
using (\ref{goal}) we can deduce that $\lambda_{r_1,\dots,r_a} = 0$, a
contradiction.

In this paragraph, we prove (\ref{goal})
by showing that
\begin{equation}\label{goal2}
e^{(s_{1,2}+1)} e^{(s_{1,2}+2)} \cdots e^{(s_{1,2}+\bar k)}
 f^{(s_{2,1}+1)} f^{(s_{2,1}+2)}\cdots f^{(s_{2,1}+\bar k)} v_+ \neq 0.
\end{equation}
The left hand side of (\ref{goal2}) equals
$$
\sum_{w \in S_{\bar k}} \operatorname{sgn}(w)
\left[e^{(\bar k+1 +s_{1,2} - 1)},  f^{(s_{2,1}+w(1))}\right]
\cdots
\left[e^{(\bar k+1 +s_{1,2}- \bar k)},  f^{(s_{2,1}+w(\bar k))}\right]
v_+.
$$
By Remark~\ref{center}, up to a sign,
this is
$\det
\left(\tilde c^{(\bar l-i+j)} \right)_{1 \leq i,j \leq \bar k} v_+$.
It is easy to see from (i) that
$\tilde c^{(r)} v_+ = e_r(b_1,\dots,b_{\bar l} /
a_1,\dots, a_{\bar k}) v_+$
where
$$
e_r(b_1,\dots,b_{\bar l} / a_1,\dots,a_{\bar k})
\coloneqq
\sum_{s+t=r} (-1)^t e_s(b_1,\dots,b_{\bar l}) h_t(a_1,\dots,a_{\bar
  k})
$$
is the $r$th elementary supersymmetric function
from \cite[Exercise I.3.23]{Mac}.
Thus we need to show that
$\det \left(e_{\bar l - i + j}(b_1,\dots,b_{\bar l} /
  a_1,\dots,a_{\bar k}) \right)_{1 \leq i,j \leq \bar k} \neq 0$.
But this determinant is exactly the supersymmetric Schur function
$s_\lambda(b_1,\dots,b_{\bar l} / a_1,\dots,a_{\bar k})$
defined in \cite[Exercise I.3.23]{Mac} for the partition $\lambda = ({\bar
k}^{\bar l})$.
Hence by the factorization property described there,
it is equal to $\prod_{1 \leq i \leq \bar l} \prod_{1 \leq j \leq \bar
  k} (b_i - a_j)$, which is indeed non-zero.

We have now proved the linear independence of the vectors
$x v_+$ as $x$ runs over all ordered supermonomials
in $\{f^{(r)}\:|\:s_{2,1} < r \leq s_{2,1}+\bar k\}$.
It remains to show that these vectors also span $\overline{L}(A)$.
For this, it is enough to show that
$\dim \overline{L}(A) \leq 2^{\bar k}$. This will be established in the
next section by means of an explicit construction
of a module of dimension $2^{\bar k}$ containing $\overline{L}(A)$ as
a subquotient.
\end{proof}

\section{Tensor products}

In this section we define some more general comultiplications
between the algebras $W_\pi$, allowing certain tensor products to be
defined. We apply this to construct so-called {\em standard modules}
$\overline{V}(A)$ for each $A \in \Tab_\pi$. Then we complete the proof of Theorem~\ref{moved}
by showing that every irreducible $W_\pi$-module is
isomorphic to one of the modules
$\overline{V}(A)$ for suitable $A$.

Recall that the pyramid $\pi$ has $l$ boxes on its second row.
Suppose we are given $l_1,\dots,l_d \geq 0$ such that
$l_1+\cdots+l_d = l$.
For each $c=1,\dots,d$, let $\pi_c$ be the pyramid
consisting of columns $l_1+\cdots+l_{c-1}+1,\dots,l_1+\cdots+l_c$
of $\pi$.
Thus $\pi$ is the ``concatenation'' of the pyramids
$\pi_1,\dots,\pi_d$.
Let $W_{\pi_c}$ be the principal $W$-algebra
defined from $\pi_c$.
Let $\sigma_1,\dots,\sigma_d$ be the unique shift matrices
such that each $\sigma_c$ is compatible with $\pi_c$,
and $\sigma_c$ is lower (resp.\ upper) triangular if
$s_{2,1} \geq l_1+\cdots+l_c$ (resp.\ $s_{1,2} \geq l_c+\cdots+l_d$).
We denote the Miura transform for $W_{\pi_c}$
by $\mu_c:W_{\pi_c} \hookrightarrow U_{\sigma_c}^{l_c}$.

\begin{Lemma}\label{comultmap}
With the above notation,
there is a unique injective algebra homomorphism
\begin{equation}\label{gencom}
\Delta_{l_1,\dots,l_d}:W_\pi \hookrightarrow W_{\pi_1}
\otimes\cdots\otimes W_{\pi_d}
\end{equation}
such that
$(\mu_1 \otimes \cdots \otimes \mu_d) \circ \Delta_{l_1,\dots,l_d} =
\mu$.
\end{Lemma}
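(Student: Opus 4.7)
The plan is to work through the injective Miura transform $\mu$ and show that $\mu(W_\pi) = Y^l_\sigma$ is actually contained in the tensor product subalgebra $Y^{l_1}_{\sigma_1} \otimes \cdots \otimes Y^{l_d}_{\sigma_d} = \mu_1(W_{\pi_1}) \otimes \cdots \otimes \mu_d(W_{\pi_d})$ of $U^l_\sigma$. Here we use the evident identification $U^l_\sigma = U^{l_1}_{\sigma_1} \otimes \cdots \otimes U^{l_d}_{\sigma_d}$ coming from the definition (\ref{usl}): the $l$ tensor factors of $U^l_\sigma$ correspond to the $l$ columns of $\pi$ and these are partitioned into the groups indexed by $\pi_1,\dots,\pi_d$.

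Once this containment is established, the desired map is the composition
\begin{equation*}
W_\pi \xrightarrow{\mu} Y^l_\sigma \hookrightarrow Y^{l_1}_{\sigma_1} \otimes \cdots \otimes Y^{l_d}_{\sigma_d} \xrightarrow{(\mu_1 \otimes \cdots \otimes \mu_d)^{-1}} W_{\pi_1} \otimes \cdots \otimes W_{\pi_d}.
\end{equation*}
Injectivity follows at once from Theorem~\ref{T:main}, and uniqueness is automatic: since each $\mu_c$ is injective, so is $\mu_1 \otimes \cdots \otimes \mu_d$, and hence the condition $(\mu_1 \otimes \cdots \otimes \mu_d) \circ \Delta_{l_1,\dots,l_d} = \mu$ pins $\Delta_{l_1,\dots,l_d}$ down completely.

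To prove the containment, I would factorize the iterated comultiplication $\Delta^l_\sigma: Y_\sigma \to U(\mathfrak{gl}_1)^{\otimes s_{2,1}} \otimes Y^{\otimes k} \otimes U(\mathfrak{gl}_1)^{\otimes s_{1,2}}$ through an intermediate tensor product. Recall that $\Delta^l_\sigma$ is defined as any legal composite of the building blocks $\Delta, \Delta_+, \Delta_-$, and the coherence diagrams in Lemma~\ref{coa} ensure that the result is independent of the ordering. This freedom lets us first carry out the $(d-1)$ splitting operations that separate $\pi_c$ from $\pi_{c+1}$ (using $\Delta$ when both adjacent blocks contain a height-$2$ column, and $\Delta_+$ or $\Delta_-$ when the boundary lies inside the right or left height-$1$ bump of $\pi$), and then perform the remaining operations independently within each block. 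The first stage gives a homomorphism $\widetilde\Delta: Y_\sigma \to Y_{\sigma_1} \otimes \cdots \otimes Y_{\sigma_d}$; the second stage is exactly $\Delta^{l_1}_{\sigma_1} \otimes \cdots \otimes \Delta^{l_d}_{\sigma_d}$. Post-composing with the appropriate $\ev$'s on the middle factors yields $\ev^l_\sigma = (\ev^{l_1}_{\sigma_1} \otimes \cdots \otimes \ev^{l_d}_{\sigma_d}) \circ \widetilde\Delta$, from which the desired containment $Y^l_\sigma \subseteq Y^{l_1}_{\sigma_1} \otimes \cdots \otimes Y^{l_d}_{\sigma_d}$ follows immediately.

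The main technical obstacle will be verifying that this sequence of splitting operations produces exactly the shift matrices $\sigma_1,\dots,\sigma_d$ prescribed in the statement. When $\pi_c$ contains a height-$2$ column this is forced by compatibility of $\sigma_c$ with $\pi_c$, but when $\pi_c$ consists entirely of height-$1$ columns one must respect the convention that $\sigma_c$ is lower (respectively upper) triangular according to whether $\pi_c$ lies in the left (respectively right) height-$1$ bump of $\pi$. This amounts to making the right choice of $\Delta_+$ versus $\Delta_-$ at each boundary within the two bumps, after which the coherence provided by Lemma~\ref{coa} makes the construction unambiguous.
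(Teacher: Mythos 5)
Your overall framing is fine and, at the end, coincides with the paper's: once one knows $\mu(W_\pi)=Y^l_\sigma\subseteq Y^{l_1}_{\sigma_1}\otimes\cdots\otimes Y^{l_d}_{\sigma_d}$ inside $U^l_\sigma=U^{l_1}_{\sigma_1}\otimes\cdots\otimes U^{l_d}_{\sigma_d}$, the map must be $(\mu_1\otimes\cdots\otimes\mu_d)^{-1}\circ\mu$, and injectivity and uniqueness follow from injectivity of the Miura transforms (Theorem~\ref{T:main}). The gap is in the containment itself. Your intermediate homomorphism $\widetilde\Delta:Y_\sigma\to Y_{\sigma_1}\otimes\cdots\otimes Y_{\sigma_d}$ cannot in general be assembled from $\Delta$, $\Delta_+$, $\Delta_-$: the map $\Delta$ only splits at a point of the height-two region, and its right-hand output $Y_{\sigma^\up}$ retains the \emph{full} shift $s_{1,2}$, while $\Delta_\pm$ only peel a \emph{single} height-one column off an extreme end, producing a $U(\mathfrak{gl}_1)$ tensor factor, not a shifted Yangian. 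Consequently a block $\pi_c$ consisting of $l_c\geq 2$ height-one columns forming a proper part of a bump never occurs as an untruncated factor $Y_{\sigma_c}$ of any composite of these maps. For instance with $s_{2,1}=0$, $s_{1,2}=3$, $k=2$ and $(l_1,l_2)=(3,2)$, the block $\pi_2$ is the last two columns, $\sigma_2$ is the upper triangular shift matrix with $s_{1,2}$-entry $2$, and the toolkit only yields maps into $Y_{\sigma_1}\otimes U(\mathfrak{gl}_1)\otimes U(\mathfrak{gl}_1)$ after peeling twice. This matters because for such a block $\ev^{l_c}_{\sigma_c}$ is far from surjective: $Y^{l_c}_{\sigma_c}$ is the \emph{proper} subalgebra of $U(\mathfrak{gl}_1)^{\otimes l_c}$ generated by the images of $d_2^{(1)},\dots,d_2^{(l_c)}$, i.e.\ symmetric polynomials in the $l_c$ copies of $e_{1,1}$. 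So the statement that the corresponding tensor components of $\ev^l_\sigma(Y_\sigma)$ lie in $Y^{l_c}_{\sigma_c}$ is exactly the nontrivial point, and your claimed factorization presupposes it rather than proving it; the ``technical obstacle'' you flag (matching shift matrices by choosing $\Delta_+$ versus $\Delta_-$) is not where the difficulty lies.

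The containment is nevertheless true and your route can be repaired: by Lemma~\ref{coa} one may perform all the peelings last, and iterating (\ref{dp}) (resp.\ (\ref{dm})) one sees that each Drinfeld generator of $Y_\sigma$ is sent to a sum of terms whose coefficients in the peeled slots are elementary symmetric polynomials in the peeled copies of $e_{1,1}$; writing $e_i(x_1,\dots,x_j)=\sum_{a+b=i}e_a(\cdot)e_b(\cdot)$ over consecutive groups of columns shows that the component of each generator in every pure height-one block is symmetric, hence lies in $Y^{l_c}_{\sigma_c}$, while the blocks containing a height-two column are handled by the coherence argument you describe (there the required factor $Y_{\sigma_c}$ \emph{is} reachable, by splitting with $\Delta$ at its height-two boundaries and peeling bump columns one at a time). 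That extra computation, or some substitute for it, is genuinely needed. For comparison, the paper proves the lemma without touching the Yangian side at all: it identifies $W_{\pi_1}\otimes\cdots\otimes W_{\pi_d}$ with the algebra $W_\pi'$ defined as in (\ref{wpidef}) inside $U(\p')$ for the Levi subalgebra $\g'=\g_1\oplus\cdots\oplus\g_d$, and checks in a few lines that the twisted parabolic projection $S_{-\tilde\rho'}\circ\psi\circ S_{\tilde\rho}$, with $\psi:U(\q)\twoheadrightarrow U(\g')$ the projection along the nilradical of the parabolic $\q\supseteq\p$, maps $W_\pi$ into $W_\pi'$; compatibility with the Miura transforms is then a commuting square, and injectivity and uniqueness are obtained just as you argue.
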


\begin{proof}
Let us add the suffix $c$ to all notation arising from the definition of
$W_{\pi_c}$, so that
$W_{\pi_c}$ is a subalgebra of $U(\p_c)$, we have that
$\g_c = \m_c\oplus\h_c\oplus\p_c^\perp$, and so on.
We identify $\g_1 \oplus \cdots \oplus \g_d$ with a subalgebra
$\g'$ of $\g$
so that $e_{i,j} \in \g_c$
is identified with $e_{i',j'} \in \g$
where $i'$ and $j'$ are the labels of
the boxes of $\pi$ corresponding to the
$i$th and $j$th boxes of $\pi_c$, respectively.
Similarly we identify $\m_1\oplus\cdots\oplus \m_d$
with $\m' \subseteq \m$, $\p_1\oplus\cdots\oplus \p_d$ with
$\p'\subseteq \p$, and $\h_1\oplus\cdots\oplus \h_d$ with
$\h' = \h$.
Also let $\tilde\rho' \coloneqq \tilde\rho_1+\cdots+\tilde \rho_d$,
a character of $\p'$.
In this way $W_{\pi_1}\otimes\cdots\otimes W_{\pi_d}$
is identified with $W_{\pi}' \coloneqq
\{u\in U(\p')\:|\: u\m_\chi' \subseteq \m_\chi'
U(\g')\}$,
where $\m_\chi' = \{x-\chi(x)\:|\:x \in \m'\}$.

Let $\q$ be the unique parabolic subalgebra of $\g$
with Levi factor $\g'$ such that
$\p \subseteq \q$.
Let $\psi:U(\q) \twoheadrightarrow U(\g')$
be the homomorphism induced by the natural projection of
$\q\twoheadrightarrow
\g'$.
The following diagram commutes:
$$
\begin{CD}
U(\p)&@>S_{-\tilde\rho'}\circ\psi\circ S_{\tilde\rho}>>& U(\p')\\
@V\operatorname{pr}\circ S_{\tilde\rho} VV&&@VV\operatorname{pr}'\circ
S_{\tilde\rho'} V\\\
U(\h)&@=&U(\h')
\end{CD}
$$
We claim that
$S_{-\tilde\rho'} \circ \psi \circ S_{\tilde\rho}$
maps $W_\pi$ into $W_\pi'$. The claim implies the lemma,
for then it makes sense to {\em define}
$\Delta_{l_1,\dots,l_d}$ to be the restriction of this map to $W_\pi$,
and we are done by the commutativity of the above diagram and injectivity of
the Miura transform.

To prove the claim,
observe that $\tilde\rho-\tilde\rho'$ extends to a character of
$\q$, hence there is a corresponding shift automorphism
$S_{\tilde\rho-\tilde\rho'}:U(\q) \rightarrow U(\q)$
which preserves $W_\pi'$.
Moreover $S_{-\tilde\rho'}\circ\psi\circ S_{\tilde\rho} =
S_{\tilde\rho-\tilde\rho'} \circ
\psi$.
Therefore it enough to check just that
$\psi(W_\pi) \subseteq W_\pi'$.
To see this, take $u \in W_\pi$, so that
$u \m_\chi \subseteq \m_\chi U(\g)$.
This implies that $u \m_\chi' \subseteq \m_\chi U(\g) \cap U(\q)$,
hence applying $\psi$
we get that $\psi(u) \m_\chi' \subseteq \m_\chi' U(\g')$.
This shows that $\psi(u) \in W_\pi'$ as required.
\end{proof}

\begin{Remark}
Special cases of the maps (\ref{gencom}) with $d=2$
are related to the comultiplications $\Delta, \Delta_+$ and $\Delta_-$
from (\ref{gin})--(\ref{dm}).
Indeed, if $l = l_1+l_2$
for $l_1 \geq s_{2,1}$ and $l_2 \geq s_{1,2}$,
the shift matrices $\sigma_1$ and $\sigma_2$ above are equal to
$\sigma^\lo$ and $\sigma^\up$, respectively.
Both squares in
the following diagram commute:
$$
\begin{CD}
Y_\sigma &@>\Delta>> &Y_{\sigma_1} \otimes Y_{\sigma_2}\\
@V\ev_\sigma^l VV&&@VV\ev_{\sigma_1}^{l_1} \otimes \ev_{\sigma_2}^{l_2}V\\
U_\sigma^l&@=&U_{\sigma_1}^{l_1}\otimes U_{\sigma_2}^{l_2}\\
@A\mu AA&&@AA\mu_1 \otimes \mu_2 A\\
W_\pi &@>\Delta_{l_1,l_2}>>&W_{\pi_1} \otimes W_{\pi_2}
\end{CD}
$$
Indeed, the top square commutes by the definition of the evaluation
homomorphisms from (\ref{Evalhom}),
while the bottom square commutes
by Lemma~\ref{comultmap}.
Hence, under our isomorphism between principal $W$-algebras and truncated
shifted Yangians,
$\Delta_{l_1,l_2}:W_\pi \rightarrow W_{\pi_1} \otimes W_{\pi_2}$
corresponds exactly to the map
$Y^l_\sigma \rightarrow Y^{l_1}_{\sigma_1} \otimes Y^{l_2}_{\sigma_2}$
induced by the comultiplication $\Delta:Y_\sigma \rightarrow
Y_{\sigma_1} \otimes Y_{\sigma_2}$.

Instead,
if $l_1=l-1$, $l_2=1$ and
the rightmost column of $\pi$ consists of a single box,
the map $\Delta_{l-1,1}:W_\pi \rightarrow W_{\pi_1} \otimes
U(\mathfrak{gl}_1)$
corresponds exactly to the map
$Y^l_\sigma \rightarrow Y^{l-1}_{\sigma_+} \otimes U(\mathfrak{gl}_1)$
induced by $\Delta_+:Y_\sigma \rightarrow Y_{\sigma_+} \otimes
U(\mathfrak{gl}_1)$.
Similarly, if $l_1=1,l_2=l-1$ and
the leftmost column of $\pi$ consists of a single box,
$\Delta_{1,l-1}:W_\pi \rightarrow
U(\mathfrak{gl}_1)\otimes W_{\pi_2}$
corresponds exactly to the map
$Y^l_\sigma \rightarrow U(\mathfrak{gl}_1) \otimes Y^{l-1}_{\sigma_-}$
induced by $\Delta_-:Y_\sigma \rightarrow
U(\mathfrak{gl}_1) \otimes Y_{\sigma_-}$.
\end{Remark}

Using (\ref{gencom}), we can make sense of tensor products:
if we are given $W_{\pi_c}$-modules $V_c$ for each $c=1,\dots,d$
then we obtain a well-defined $W_\pi$-module
\begin{equation}
V_1 \otimes\cdots\otimes V_d \coloneqq \Delta_{l_1,\dots,l_d}^*(V_1
\boxtimes\cdots\boxtimes V_d),
\end{equation}
i.e.\ we take the pull-back of their outer tensor product
(viewed as a module via the usual sign convention).

Now specialize to the situation that
$d=l$ and $l_1=\cdots=l_d = 1$. Then each pyramid
$\pi_c$ is a single column of height one
or two. In the former case $W_{\pi_c} =
U(\mathfrak{gl}_{1})$ and in the latter $W_{\pi_c} = U(\mathfrak{gl}_{1|1})$.
So we have that
$W_{\pi_1} \otimes\cdots\otimes W_{\pi_l} = U_\sigma^l$, and
the map $\Delta_{1,\dots,1}$ coincides with the Miura transform
$\mu$.

Given $A \in \Tab_\pi$, let $A_c \in \Tab_{\pi_c}$ be its $c$th column
and $\overline{L}(A_c)$ be the corresponding irreducible
$W_{\pi_c}$-module.
Let us decode this notation a little.
If  $W_{\pi_c} = U(\mathfrak{gl}_1)$ then
$A_c$ has just a single entry $b$ and
$\overline{L}(A_c)$ is the one-dimensional module
with an even basis vector $v_+$ such that
$e_{1,1} v_+ = (-1)^{|2|} b v_+$.
If $W_{\pi_c} = U(\mathfrak{gl}_{1|1})$ then
$A_c$ has two entries, $a$ in the first row and $b$ in the second
row, and $\overline{L}(A_c)$ is one- or two-dimensional according to
whether $a=b$ or not;
in both cases $\overline{L}(A_c)$ is generated by an even vector $v_+$
such that $e_{1,1} v_+ = (-1)^{|1|}a v_+, e_{2,2} v_+ = (-1)^{|2|}bv_+$
and $e_{1,2} v_+ = 0$.
Let
\begin{equation}
\overline{V}(A) \coloneqq \overline{L}(A_1) \otimes\cdots\otimes
\overline{L}(A_l).
\end{equation}
Note that $\dim \overline{V}(A) = 2^{k-h}$
where $h$ is the number of $c=1,\dots,l$ such that
$A_c$ has two equal entries.

\begin{Lemma}\label{ishw}
For any $A \in \Tab_\pi$,
there is a non-zero homomorphism
$$
\overline{M}(A) \rightarrow \overline{V}(A)
$$
sending
the cyclic vector $1 \otimes 1_A \in \overline{M}(A)$
to $v_+ \otimes \cdots \otimes v_+ \in \overline{V}(A)$.
In particular $\overline{V}(A)$ contains a subquotient isomorphic to
$\overline{L}(A)$.
\end{Lemma}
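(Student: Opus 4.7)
The plan is to produce the homomorphism by verifying the universal property of the Verma module. By construction of $\overline{M}(A) = W_\pi \otimes_{W_\pi^\sharp} \C_A$, such a map sending $1 \otimes 1_A \mapsto v_+ \otimes \cdots \otimes v_+$ exists if and only if the vector $v_+^{\otimes l} \in \overline{V}(A)$ satisfies the three highest weight identities of Lemma~\ref{irrdesc}:
\begin{align*}
e^{(r)}\, v_+^{\otimes l} &= 0 \quad\text{for $r > s_{1,2}$},\\
d_1^{(r)}\, v_+^{\otimes l} &= e_r(a_1, \dots, a_k)\, v_+^{\otimes l},\\
d_2^{(r)}\, v_+^{\otimes l} &= e_r(b_1, \dots, b_l)\, v_+^{\otimes l}.
\end{align*}
Granted these, the non-zero image of the resulting homomorphism is a non-zero quotient of $\overline{M}(A)$, hence surjects onto its unique irreducible quotient $\overline{L}(A)$ by Lemma~\ref{irrdesc}, yielding the ``in particular'' statement.

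I would verify the three identities by induction on $l$. The base case $l = 1$ is immediate: $\pi$ consists of a single column, $\overline{V}(A) = \overline{L}(A_1)$ as $W_{\pi_1}$-modules, and the identities hold by the construction of $\overline{L}(A_1)$ as the specified $\mathfrak{gl}_1$- or highest weight $\mathfrak{gl}_{1|1}$-module recalled just before the lemma. For the induction step with $l \geq 2$, split $\pi = \pi_1 \cup \pi'$ with $\pi_1$ the leftmost column and use the map $\Delta_{1,l-1}: W_\pi \to W_{\pi_1} \otimes W_{\pi'}$ from Lemma~\ref{comultmap}. By the Remark following that lemma, under the Miura transforms $\Delta_{1,l-1}$ corresponds to $\Delta_-$ when $\pi_1$ has height~$1$ (so $s_{2,1} > 0$) and to the comultiplication $\Delta$ of $Y$ when $\pi_1$ has height~$2$ (so $s_{2,1} = 0$); these are described explicitly in~(\ref{dm}) and Proposition~\ref{comultrem}.

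In both cases the verification reduces to a short calculation using the inductive hypothesis on $\pi'$ together with the action of the relevant generators on the highest weight vector $v_+^{(1)} \in \overline{L}(A_1)$. When $\pi_1$ has height~$1$, formula~(\ref{dm}) shows that $d_1^{(r)}$ and $e^{(r)}$ act only on the right tensor factor, where the induction applies directly; the image $d_2^{(r)} \mapsto 1 \otimes d_2^{(r)} + (-1)^{|2|} e_{1,1} \otimes d_2^{(r-1)}$ is then handled using $e_{1,1} v_+^{(1)} = (-1)^{|2|} b_1 v_+^{(1)}$ and the Pascal-type identity $e_r(b_1, \dots, b_l) = e_r(b_2, \dots, b_l) + b_1\, e_{r-1}(b_2, \dots, b_l)$. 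When $\pi_1$ has height~$2$, the key observation is that $e_{1,2} v_+^{(1)} = 0$ in the $\mathfrak{gl}_{1|1}$-module $\overline{L}(A_1)$ forces $e(u) v_+^{(1)} = 0$; this kills every $n \geq 1$ summand in the formulas of Proposition~\ref{comultrem} for $\Delta(d_1(u))$, $\Delta(d_2(u))$, $\Delta(e(u))$, leaving only a leading term that combines cleanly with the inductive hypothesis (and the eigenvalues $d_1(u) v_+^{(1)} = (1 + a_1 u^{-1}) v_+^{(1)}$, $d_2(u) v_+^{(1)} = (1 + b_1 u^{-1}) v_+^{(1)}$) to produce the required generating function identities for $d_1(u)$, $d_2(u)$, $e(u)$ on $v_+^{\otimes l}$.

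There is no significant obstacle: the whole argument is careful bookkeeping with the explicit comultiplication formulas. The points requiring most care are tracking parity signs (notably that the two factors of $(-1)^{|2|}$ in the $d_2^{(r)}$ computation combine to give the scalar $b_1$), and correctly matching the shift matrices of $\pi'$ against those of $\pi$ so that the inductive hypothesis applies to the right range of indices on the right tensor factor.
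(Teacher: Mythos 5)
Your proposal is correct, but it takes a genuinely different route from the paper's proof. Both arguments make the same initial reduction: by the universal property of the induced module $\overline{M}(A)=W_\pi\otimes_{W_\pi^\sharp}\C_A$, it suffices to check that $v_+\otimes\cdots\otimes v_+$ is killed by the $e^{(r)}$ and is a $d_1^{(r)},d_2^{(r)}$-eigenvector with eigenvalues $e_r(a_1,\dots,a_k)$, $e_r(b_1,\dots,b_l)$. The paper then verifies these identities in one stroke: it computes the action of $d_1^{(r)},d_2^{(r)},e^{(r)}$ directly in $U_\sigma^l$ via the Miura transform $\mu=\Delta_{1,\dots,1}$, using the explicit formula (\ref{EQ:Tpidef}) together with (\ref{maini})--(\ref{mainj}); in each monomial the rightmost factor of the form $\bar e_{1,2}^{[c]}$ can be commuted to the end, where it kills $v_+$, so only the ``diagonal'' terms survive and produce the elementary symmetric functions. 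You instead induct on the number of columns, peeling off the leftmost column and identifying $\Delta_{1,l-1}$ with $\Delta_-$ or with $\Delta$ via the Remark following Lemma~\ref{comultmap}, then using (\ref{dm}) and Proposition~\ref{comultrem}; the key vanishing $e(u)v_+=0$ (forced by $e_{1,2}v_+=0$) plays exactly the role that the vanishing of the $\bar e_{1,2}^{[c]}$-terms plays in the paper, and your sign bookkeeping in the $d_2^{(r)}$ step is right. Your approach buys independence from the combinatorial formula (\ref{EQ:Tpidef}) at the cost of leaning on the comultiplication machinery; two points should be made explicit in a write-up. First, the induction uses the coassociativity $(\id\otimes\Delta_{1,\dots,1})\circ\Delta_{1,l-1}=\Delta_{1,\dots,1}$, which follows immediately from the characterization $(\mu_1\otimes\cdots\otimes\mu_d)\circ\Delta_{l_1,\dots,l_d}=\mu$ in Lemma~\ref{comultmap} and the injectivity of the Miura transforms, but is not automatic from the statement you quote. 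Second, your dichotomy ``leftmost column of height one $\Rightarrow s_{2,1}>0$'' fails only in the degenerate case $k=0$ with the compatible $\sigma$ chosen so that $s_{2,1}=0$: there $\Delta_-$ is undefined and the $\Delta$-identification needs $l-1\geq s_{1,2}=l$, so you should instead split off the rightmost column and use $\Delta_+$, or simply observe that then $W_\pi=\C[d_2^{(1)},\dots,d_2^{(l)}]$ and check the $d_2$-eigenvalues directly. Neither point is a real obstacle.
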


\begin{proof}
Suppose that
$A = \substack{a_1 \cdots a_{k} \\ b_1 \cdots b_{l}}$.
By the definition of $\overline{M}(A)$ as an induced module, it
suffices
to show that $v \coloneqq v_+ \otimes \cdots \otimes v_+ \in \overline{V}(A)$
is annihilated by all $e^{(r)}$ for $r > s_{1,2}$ and that
$d_1^{(r)} v = e_r(a_1,\dots,a_k) v$
and $d_2^{(r)} v = e_r(b_1,\dots,b_l) v$
for all $r > 0$.
For this we calculate from the explicit formulae for
the invariants $d_1^{(r)}, d_2^{(r)}$ and $e^{(r)}$
given by (\ref{EQ:Tpidef}) and (\ref{maini})--(\ref{mainj}), remembering that
their action on $v$ is defined via the Miura transform $\mu = \Delta_{1,\dots,1}$.
It is convenient in this proof
to set
$$
\bar e_{i,j}^{[c]} \coloneqq
\left\{
\begin{array}{ll}
(-1)^{|i|}1^{\otimes (c-1)} \otimes e_{i,j}
\otimes
1^{\otimes (l-c)}&\text{if $q_c=2$,}\\
(-1)^{|2|} 1^{\otimes (c-1)}\otimes e_{1,1} \otimes
1^{\otimes (l-c)}&\text{if $q_c = 1$ and $i=j=2$,}\\
0&\text{otherwise,}
\end{array}
\right.
$$
for any $1 \leq i,j \leq 2$ and $1 \leq c \leq l$, where $q_c$ is the
number of boxes in the $c$th
column of $\pi$.
First we have that
$$
d_1^{(r)} v = \sum_{1 \leq c_1,\dots,c_r \leq l}
\sum_{1 \leq h_1,\dots,h_{r-1} \leq 2}
\bar e_{1,h_1}^{[c_1]}
\bar e_{h_1,h_2}^{[c_2]}
\cdots \bar e_{h_{r-1},1}^{[c_r]} v
$$
summing only over terms with $c_1 < \cdots < c_r$.
The elements on the right commute (up to sign) because
the $c_i$ are all distinct,
so any $\bar e_{1,2}^{[c_i]}$ produces
zero as $e_{1,2} v_+ = 0$.
Thus the summation reduces just to
$$
\sum_{1 \leq c_1 < \cdots < c_r \leq l}
\bar e_{1,1}^{[c_1]} \cdots \bar e_{1,1}^{[c_r]} v = e_r(a_1,\dots,a_k) v
$$
as required.
Next we have that
$$
d_2^{(r)} v = \sum_{1 \leq c_1, \dots,
c_r \leq l}
\sum_{1 \leq h_1,\dots,h_{r-1} \leq 2}
(-1)^
{\#\left\{i=1,\dots,r-1\:|\:\row(h_i) =1
\right\}}
\bar e_{2,h_1}^{[c_1]}
\bar e_{h_1,h_2}^{[c_2]} \cdots \bar e_{h_{r-1},2}^{[c_r]} v
$$
summing only over terms with
$c_i \geq c_{i+1}$ if $\row(h_i) = 1$,
$c_i < c_{i+1}$ if $\row(h_i) = 2$.
Here, if any monomial $\bar e_{1,2}^{[c_i]}$ appears, the rightmost such
can be commuted to the end, when it acts as zero.
Thus the summation reduces just to the terms with $h_1 = \cdots =
h_{r-1}=2$
and again we get the required elementary symmetric function
$e_r(b_1,\dots,b_l)$.
Finally we have that
$$
e^{(r)} v = \sum_{1 \leq c_1, \dots,
c_r \leq l}
\sum_{1 \leq h_1,\dots,h_{r-1} \leq 2}
(-1)^{\#\left\{i=1,\dots,r-1\:|\:
\row(h_i) =1
\right\}}
\bar e_{1,h_1}^{[c_1]}
\bar e_{h_1,h_2}^{[c_2]} \cdots \bar e_{h_{r-1},2}^{[c_r]} v
$$
summing only over terms with
$c_i \geq c_{i+1}$ if $\row(h_i) = 1$,
$c_i < c_{i+1}$ if $\row(h_i) = 2$.
As before this is zero because the rightmost $\bar e_{1,2}^{[c_i]}$ can
be commuted to the end.
\end{proof}

\begin{Theorem}\label{fint}
Take any $A
=\substack{a_1 \cdots a_{k} \\ b_1 \cdots b_{l}}
\in \Tab_\pi$
and let $h \geq 0$ be maximal such that there exist distinct $1 \leq
i_1,\dots,i_h \leq k$ and distinct
$1 \leq j_1,\dots,j_h \leq l$ with $a_{i_1} = b_{j_1},...,a_{i_h} =
b_{j_h}$.
Choose
$B \sim A$ so that $B$ has $h$ columns of height two containing equal
entries.
Then
\begin{equation}
\overline{L}(A) \cong \overline{V}(B).
\end{equation}
In particular $\dim \overline{L}(A) = 2^{k-h}$.
\end{Theorem}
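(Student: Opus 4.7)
The plan is to establish Theorem~\ref{fint} by a dimension-squeeze argument, combining Lemma~\ref{ishw} (applied to $B$ rather than $A$) with the already-proved linear independence half of Theorem~\ref{moved}; this will simultaneously finish the spanning half of Theorem~\ref{moved} that was deferred.

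First I would check that a rearrangement $B \sim A$ of the required form actually exists. Since $\pi$ has exactly $k$ columns of height two and $l-k$ columns of height one, and since the $h$ matching pairs $(a_{i_s}, b_{j_s})$ with $a_{i_s} = b_{j_s}$ involve $h \leq k$ entries from each row, one can permute the entries of the first row and the entries of the second row of $A$ independently so as to place each matching pair in a single column of height two. Call the result $B$. Next I would compute $\dim \overline{V}(B)$ directly from its definition as an outer tensor product pulled back along $\Delta_{1,\dots,1} = \mu$: each column of height one contributes a one-dimensional $U(\mathfrak{gl}_1)$-module, each column of height two with equal entries contributes the one-dimensional $U(\mathfrak{gl}_{1|1})$-module $\overline{L}(B_c)$, and each of the remaining $k-h$ columns of height two contributes a two-dimensional $U(\mathfrak{gl}_{1|1})$-module. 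Hence $\dim \overline{V}(B) = 2^{k-h}$.

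Now apply Lemma~\ref{ishw} to $B$: there is a non-zero homomorphism $\overline{M}(B) \to \overline{V}(B)$, so $\overline{V}(B)$ has a composition factor isomorphic to $\overline{L}(B)$. Since $B \sim A$ we have $\overline{L}(B) \cong \overline{L}(A)$ by Lemma~\ref{irrdesc}, giving the upper bound
\[
\dim \overline{L}(A) \;\leq\; \dim \overline{V}(B) \;=\; 2^{k-h}.
\]
On the other hand, the linear independence part of Theorem~\ref{moved}, which was already verified via the non-vanishing computation involving the supersymmetric Schur function $s_{(\bar k^{\bar l})}(b_1,\dots,b_{\bar l}/a_1,\dots,a_{\bar k})$, supplies $2^{k-h}$ linearly independent vectors of the form $xv_+$ in $\overline{L}(A)$, hence $\dim \overline{L}(A) \geq 2^{k-h}$.

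Combining these two bounds forces $\dim \overline{L}(A) = 2^{k-h} = \dim \overline{V}(B)$. Because a composition factor of a module cannot have strictly larger dimension than the module itself, equality of dimensions forces $\overline{V}(B)$ to be irreducible and hence isomorphic to $\overline{L}(A)$, which is the assertion of the theorem. As a bonus, this equality of dimensions also completes the proof of Theorem~\ref{moved}, since the $2^{k-h}$ linearly independent vectors produced there must then span. The only genuine bookkeeping step is the dimension count for $\overline{V}(B)$; there is no real obstacle, because all the structural input (existence of the comultiplication $\Delta_{1,\dots,1}$, the description of $\overline{L}(B_c)$ for one- and two-box columns, and the highest-weight calculation of Lemma~\ref{ishw}) is already in place.
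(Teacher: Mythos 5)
Your proposal is correct and follows essentially the same route as the paper: Lemma~\ref{ishw} applied to $B$ gives $\dim \overline{L}(A) \leq \dim \overline{V}(B) = 2^{k-h}$, the linear independence already established in the partial proof of Theorem~\ref{moved} gives the reverse inequality, and the resulting equality of dimensions forces $\overline{V}(B)$ to be irreducible and isomorphic to $\overline{L}(A)$ while completing the spanning half of Theorem~\ref{moved}. Your version merely spells out the final irreducibility step and the dimension count for $\overline{V}(B)$ (and uses $\overline{L}(A)\cong\overline{L}(B)$ for $A\sim B$, which is the content of Theorem~\ref{irrclass} rather than Lemma~\ref{irrdesc}), which the paper leaves implicit.
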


\begin{proof}
By Lemma~\ref{ishw}, $\overline{V}(B)$ has a subquotient isomorphic to
$\overline{L}(B) \cong \overline{L}(A)$,
which implies that $\dim \overline{L}(A) \leq \dim \overline{V}(B) = 2^{k-h}$.
Also by the linear independence established in the partial
proof of Theorem~\ref{moved} given in the previous section
we know that $\dim \overline{L}(A) \geq 2^{k-h}$.
\end{proof}

In particular this establishes the fact about dimension needed to complete
the proof of Theorem~\ref{moved}
in the previous section.

\end{document}